\setlist{noitemsep}
\renewcommand{\phi}{\varphi}
\renewcommand{\epsilon}{\varepsilon}
\theoremstyle{plain}
\theoremstyle{plain}
    \newtheorem{thm}{Theorem}[section]
    \newtheorem{lem}[thm]{Lemma}
    \newtheorem{prop}[thm]{Proposition}
\theoremstyle{definition}
    \newtheorem{defn}[thm]{Definition}
    \newtheorem{rem}[thm]{Remark}
\theoremstyle{remark}
\newcommand{\Ad}{\textup{Ad}}
\newcommand{\id}{\textup{id}}
\newcommand {\rr}{{\mathbb{R}}}
\newcommand {\cc}{{\mathbb{C}}}
\newcommand {\nn}{{\mathbb{N}}}
\newcommand {\g}{{\mathfrak{g}}}
\newcommand {\la}{{\mathfrak{a}}}
\newcommand {\lp}{{\mathfrak{p}}}
\newcommand {\lk}{{\mathfrak{k}}}
\newcommand {\lnn}{{\mathfrak{n}}}
\renewcommand{\phi}{\varphi}
\DeclareMathOperator{\Ind}{Ind}
\DeclareMathOperator{\Op}{Op}
\DeclareMathOperator{\sign}{sign}
\def\dbar{{\ \mathchar'26\mkern-12mu d}}
\numberwithin{equation}{section}
\title{Patterson--Sullivan distributions for rank one symmetric spaces of the noncompact type}
\author{Joachim Hilgert, Michael Schr\"oder\footnote{Supported by the
International Research Training Group DFG-1133 ``Geometry and
Analysis of Symmetries''}}
\begin{document}

\maketitle

\begin{abstract}

There is a remarkable relation between two kinds of phase space
distributions associated to eigenfunctions of the Laplacian of a
compact hyperbolic manifold: It was observed in \cite{AZ} that for
compact hyperbolic surfaces $X_{\Gamma}=\Gamma\backslash\mathbb{H}$
Wigner distributions $\int_{S^* X_{\Gamma}} a \, dW_{ir_j} = \langle
\mathrm{Op}(a)\phi_{ir_j},\phi_{ir_j}\rangle_{L^2(X_{\Gamma})}$ and
Patterson--Sullivan distributions $PS_{ir_j}$ are asymptotically
equivalent as $r_j\rightarrow\infty$. We generalize the definitions
of these distributions to all rank one symmetric spaces of
noncompact type and introduce off-diagonal elements
$PS_{\lambda_j,\lambda_k}$. Further, we give explicit relations
between off-diagonal Patterson--Sullivan distributions and
off-diagonal Wigner distributions and describe the asymptotic
relation between
these distributions.\\

\textit{2000 Mathematics Subject Classification:} Primary 53C35; Secondary  58C40, 58J50.\\

\textit{Key words and phrases:} Patterson--Sullivan distributions,
Wigner distributions, quantum ergodicity, rank one symmetric spaces,
non-Euclidean pseudo-differential analysis.

\end{abstract}

\tableofcontents

\section{Introduction}
In this paper we generalize an interesting link between two kinds of
phase space distributions which was observed in \cite{AZ} for
hyperbolic surfaces to all rank one Riemannian symmetric spaces of
the noncompact type. The distributions of interest arise in the
study of quantum ergodicity. To put our results in a general
context, we follow \cite{Z87} to briefly recall some relevant
notions of the framework of quantum ergodicity.

If $(X,g)$ is an $n$-dimensional compact Riemannian manifold with
Laplace operator $\Delta$, then
$L^2(X)=\oplus_{\lambda_k}\mathcal{H}_{\lambda_k}$, where
$\Delta=-\lambda_k$ on the eigenspaces $\mathcal{H}_{\lambda_k}$ and
$\dim(H_{\lambda_k})<\infty$. We fix ordered orthonormal bases
$\left\{\phi_{k_i}: 1\leq i\leq\dim(H_{\lambda_k})\right\}$ for each
$\mathcal{H}_{\lambda_k}$ to obtain a sequence $\left\{\phi_{k_i}:
k=1,2,3,\ldots, 1\leq i\leq\dim(H_{\lambda_k}) \right\}$ of
orthonormal eigenfunctions. Given a calculus of pseudodifferential
operators on $X$, i.e. an assignment $\Op:
C^{\infty}(S^*X)\rightarrow B(L^2(X))$ of bounded operators $\Op(a)$
to smooth zero order symbols $a$, satisfying the usual requirements
\cite{Zel09}, we associate to a given eigenfunction $\phi_k$ a
distribution $W_k$, called the \emph{Wigner distribution for}
$\phi_k$, defined by $W_k(a):=\langle
\Op(a)\phi_k,\phi_k\rangle_{L^2(X)}$. A distribution
$\mu\in\mathcal{D}'(S^*X)$ is called \emph{weak}$^*$\emph{-limit
point} of the $\left\{W_{k_j}\right\}$ if there is a subsequence
$\mathcal{S}\subseteq\left\{\lambda_{k_i}\right\}$ such that
$\lim_{\mathcal{S}}W_{k_i}(a)=\mu(a)$ for all $a$. One of the
problems in the framework of quantum ergodicity is the question:
What are the weak$^*$-limit points of the $W_{k_i}$? All such limit
distributions are invariant measures for the geodesic flow on
$S^*X$. It is not known which limit points arise and how they depend
on the choice of the $\{\phi_{k_i}\}$.

It was observed in \cite{AZ} that for compact hyperbolic surfaces
$X_{\Gamma}=\Gamma\backslash\mathbb{H}$ Wigner distributions are
asymptotically equivalent (and hence equivalent for the study of
quantum ergodicity) to \emph{Patterson--Sullivan distributions}
$\widehat{PS}_{ir_{k}}$, which are also associated to the sequence
$\left\{\phi_{k}\right\}$ of eigenfunctions. An interesting property
of these Patterson--Sullivan distributions is that they are
invariant under the geodesic flow, so one might hope that the study
of these invariant distributions combined with the relations to
Wigner distributions yield more insight into the questions of
quantum ergodicity for symmetric spaces.

Before we state our results we have to make a few remarks about the
special $\Psi DO$-calculus we use in this paper: S. Zelditch
(\cite{Z84}) introduced a natural quantization for $G/K$, when
$G=PSU(1,1)$, $K=PSO(2)$. It is in fact possible to generalize this
calculus two all rank one symmetric spaces $X:=G/K$, where $G$ is a
connected semisimple Lie group with finite center and $K$ a maximal
compact subgroup of $G$. The basic definitions and properties of
this calculus are given in Section \ref{section PsiDO}. Full details
with all computations concerning this calculus will appear in
\cite{S}. An advantage of this calculus is its $G$-equivariance: Fix
a co-compact and torsion free discrete subgroup $\Gamma$ of $G$ and
let $SX$ denote the unit tangent bundle of $X=G/K$. If $a\in
C^{\infty}(SX)$ is $\Gamma$-invariant (under the natural action of
$G$ on $SX$, see Section \ref{section PsiDO}), then it yields a
pseudodifferential operator on the quotient
$X_{\Gamma}:=\Gamma\backslash G/K$\index{$X_{\Gamma}$, compact
hyperbolic manifold}.

Our setting is as follows: Let $X=G/K$\index{$X=G/K$, symmetric
space} denote a general rank one symmetric space of the noncompact
type, where $G$\index{$G$, semisimple Lie group} is a connected
semisimple Lie group with finite center and $K$\index{$K$, maximal
compact subgroup of $G$} a maximal compact subgroup of $G$. Let
$G=KAN$ be a corresponding Iwasawa decomposition of $G$ and let $M$
denote the centralizer of $A$ in $K$. The geodesic boundary of $X$
can be identified with the flag manifold $B:=K/M$. Let $o:=K\in G/K$
denote the \emph{origin} of the symmetric space $X$. Further, let
$\Delta$, resp. $\Delta_{\Gamma}$, denote the Laplace operator of
$X$, resp. $X_{\Gamma}$. We consider the following automorphic
eigenvalue problem on $X=G/K$:
\begin{eqnarray*}
\Delta \phi &=& - c \, \phi \\
\phi(\gamma z) &=& \phi(z) \textnormal{ for all } \gamma\in\Gamma \textnormal{ and for all } z\in X.
\end{eqnarray*}
In other words, we study the eigenfunctions of the Laplacian on the
compact manifold $X_{\Gamma}=\Gamma\backslash X$. If the
eigenfunctions $\phi$ are real-valued, the eigenvalues $-c\in\rr$,
$-c\leq-\langle\rho,\rho\rangle$, of $\Delta$ are of the form
$-c:=-c_{\lambda}:=-(\langle\lambda,\lambda\rangle+\langle\rho,\rho\rangle)$,
where $\lambda\in\la^*$, the real dual of the Lie algebra $\la$ of
$A$, and where $\langle\,,\,\rangle$ denotes the inner product on
$\la^*$ induced by the Killing form (see Section \ref{Prelim}). We
fix a complete $L^2(X_{\Gamma})$-orthonormal basis
$\left\{\phi_{\lambda_j}\right\}$ of real-valued and
$\Gamma$-invariant eigenfunctions, where the eigenvalues are repeated according to their multiplicity. We hence obtain a corresponding
sequence of eigenvalue parameters $\lambda_j\in\la^*$. Then
\begin{eqnarray*}
\Delta \phi_{\lambda_j} &=& - (\langle\lambda_j,\lambda_j\rangle+\langle\rho,\rho\rangle) \phi_{\lambda_j} \textnormal{ for all } j,\\
\phi_{\lambda_j}(\gamma z) &=& \phi_{\lambda_j}(z) \textnormal{ for all } \gamma\in\Gamma, \, z\in X, \, j\in\nn_0.
\end{eqnarray*}

If $Y$ is a manifold, $u$ a distribution or hyperfunction on $Y$ and
$\phi$ a test function, then we denote the pairing $\langle \varphi,
u\rangle_Y$ by $\int_Y  \varphi(y)u(dy)$.

For each eigenfunction $\phi_{\lambda_j}$ (with exponential growth,
see Section \ref{section boundary values}) of the negative Laplacian
$-\Delta$ with corresponding eigenvalue
$c_j=\langle\lambda_j,\lambda_j\rangle+\langle\rho,\rho\rangle$
there is a unique distribution boundary value (also described in
Section \ref{section boundary values})
$T_{\lambda_j}\in\mathcal{D}'(B)$ such that
\begin{eqnarray*}
\phi_{\lambda_j}(x) = \int_B e^{(i\lambda_j+\rho)\langle x,b\rangle} T_{\lambda_j}(db).
\end{eqnarray*}
Here $\langle x,b\rangle$ denotes the horocyclic bracket defined in
\eqref{horocycle bracket} below. Given $a\in C^{\infty}(SX)$, the
\emph{Wigner distributions} are defined by
\begin{eqnarray*}
W_{\lambda_j,\lambda_k}(a) := \langle \Op(a)\phi_{\lambda_j},
\phi_{\lambda_k}\rangle_{L^2(X_{\Gamma})}.
\end{eqnarray*}
In the special case when $j=k$, we write $W_{\lambda_j}(a) := W_{\lambda_j,\lambda_j}(a)$.

Let $B^{(2)}=(B\times B)\setminus\Delta = \left\{(b,b')\in B\times
B: b\neq b'\right\}$ denote the set of pairs of distinct boundary
points ($\Delta$ denotes the diagonal of $B\times B$). We will
describe the geodesic boundary in Section \ref{Prelim}. Each
geodesic of $X$ has a unique forward limit point and a unique
backward limit point in $B$. In particular, we identify $B^{(2)}$
with the space of geodesics. We will see in Section \ref{section
boundary values} that in the case of $\Gamma$-invariant
eigenfunctions the boundary values $T_{\lambda_j}$ satisfy the
following equivariance property:
\begin{eqnarray}
T_{\lambda_j}(d\gamma b) = e^{-(i\lambda_j+\rho)\langle\gamma o,\gamma b\rangle} T_{\lambda_j}(db), \,\,\,\,\, \gamma\in\Gamma.
\end{eqnarray}
It is then possible to introduce (see Section \ref{PatSul} for
details) functions $d_{\lambda_j}$ on $B^{(2)}$ and a Radon
transform $\mathcal{R}:C_c^{\infty}(SX)\rightarrow
C_c^{\infty}(B^{(2)})$ such that the expression
\begin{eqnarray}\label{expression}
\langle a,PS_{\lambda_j}\rangle_{SX} := \int_{B^{(2)}} \, d_{\lambda_j}(b,b') \, \mathcal{R}(a)(b,b') \, T_{\lambda_j}(db)\, T_{\lambda_j}(db')
\end{eqnarray}
defines a $\Gamma$-invariant distribution on $SX$. We call these
distributions the \emph{Patterson--Sullivan distributions}
associated to the $\left\{\phi_{\lambda_j}\right\}$. The
$PS_{\lambda_j}$ are invariant under the geodesic flow and under
time reversal (see Section \ref{PatSul} for details). The weight
functions $d_{\lambda_j}$ will be called \emph{intermediate values}
because of \eqref{equivariance dlambda}, which generalizes the
intermediate value formula \eqref{intermediate value formula} for
hyperbolic surfaces,

Let $H:KAN\rightarrow\la$ denote the Iwasawa projection (see Section
\ref{Prelim}) and let $w$ denote the non-trivial Weyl group element
(see Section \ref{Prelim}). Given $j\in\nn_0$, define
\begin{eqnarray}
L_{\lambda_j}a(g) := \int_N e^{-(i\lambda_j+\rho)(H(nw))}a(gn)dn, \,\,\,\,\,\,\, a\in C(G),
\end{eqnarray}
whenever the integral exists. 

Following \cite{AZ} we use a cutoff $\chi\in C_c^{\infty}(X)$, which
is a smooth replacement for the characteristic function of a
fundamental domain $\mathcal{F}$ for $\Gamma$ (cf. Section
\ref{PatSul}). A concrete relation between the $W_{\lambda_j}$ and
the $PS_{\lambda_j}$ is given by the operators $L_{\lambda_j}$ and
it generalizes the ``exact formula'' in Theorem 1.1 of \cite{AZ}:

\begin{thm}\label{Intertwining Formula diagonal}
Let $a\in C^{\infty}(SX_{\Gamma})$. Then
\begin{eqnarray}
\langle \Op(a)\phi_{\lambda_j},
\phi_{\lambda_j}\rangle_{L^2(X_{\Gamma})} = \langle
L_{\lambda_j}(\chi a), PS_{\lambda_j}\rangle_{SX}.
\end{eqnarray}
\end{thm}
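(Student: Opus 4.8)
The plan is to evaluate the left-hand side by inserting the boundary-value representation of $\phi_{\lambda_j}$, unfolding the integral over $X_\Gamma$ by means of the cutoff $\chi$, and then carrying out the resulting integral over $X$ one pair of boundary points at a time; the geodesic direction will produce the Radon transform $\mathcal R$, the horocyclic direction the operator $L_{\lambda_j}$, and the leftover scalar factor the intermediate value $d_{\lambda_j}$. Concretely: by the construction of the calculus in Section~\ref{section PsiDO} (see also \cite{S}), applied to the representation $\phi_{\lambda_j}(x)=\int_B e^{(i\lambda_j+\rho)\langle x,b\rangle}T_{\lambda_j}(db)$, one has $\Op(a)\phi_{\lambda_j}(x)=\int_B a(x,b)\,e^{(i\lambda_j+\rho)\langle x,b\rangle}\,T_{\lambda_j}(db)$, where $a(x,b)$ is the value of $a$ at the unit vector over $x$ with forward endpoint $b$ (using $SX\cong X\times B$). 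Since $\phi_{\lambda_j}$ is real-valued and $\Gamma$-invariant and $\chi$ satisfies $\sum_{\gamma\in\Gamma}\chi(\gamma\cdot)\equiv1$, the $X_\Gamma$-integral unfolds to an integral over $X$ against $\chi$, and inserting this formula together with $\overline{\phi_{\lambda_j}}=\phi_{\lambda_j}=\int_B e^{(i\lambda_j+\rho)\langle\cdot,b'\rangle}T_{\lambda_j}(db')$ gives
$$\langle\Op(a)\phi_{\lambda_j},\phi_{\lambda_j}\rangle_{L^2(X_\Gamma)}=\int_X\int_B\int_B \chi(x)\,a(x,b)\,e^{(i\lambda_j+\rho)(\langle x,b\rangle+\langle x,b'\rangle)}\,T_{\lambda_j}(db)\,T_{\lambda_j}(db')\,\td x.$$
For each fixed $x$ the inner integrand is a smooth function of $(b,b')\in B\times B$ and $\chi$ has compact support, so $x$ gives a compactly supported continuous map into $C^\infty(B\times B)$; pulling the $X$-integral through the continuous functional $T_{\lambda_j}\otimes T_{\lambda_j}$ shows that the above equals $\langle\Phi,T_{\lambda_j}\otimes T_{\lambda_j}\rangle_{B\times B}$, where $\Phi(b,b'):=\int_X\chi(x)\,a(x,b)\,e^{(i\lambda_j+\rho)(\langle x,b\rangle+\langle x,b'\rangle)}\,\td x$ is a globally smooth function on $B\times B$. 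The theorem thus reduces to the pointwise identity
$$\Phi(b,b')=d_{\lambda_j}(b,b')\,\mathcal R\bigl(L_{\lambda_j}(\chi a)\bigr)(b,b'),\qquad(b,b')\in B^{(2)}.$$

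I would prove this first for the base pair $(b,b')=(eM,wM)$, whose geodesic is $t\mapsto a_t\cdot o$ with $a_t=\exp(tH_0)$ ($\la=\rr H_0$, normalised so the geodesic has unit speed) and whose lift to $SX=G/M$ is $t\mapsto a_tM$. The Iwasawa decomposition $G=NAK$ yields a diffeomorphism $\rr\times N\to X$, $(t,n)\mapsto a_tn\cdot o$, under which the geodesic flow is $t\mapsto t+s$ and $N$ acts by left translations in $n$, so with suitable normalisations $\td x=\td t\,\td n$. Since $a_tn\in NA\subseteq KAN$ has trivial $K$-component, the unit vector over $a_tn\cdot o$ with forward endpoint $eM$ is $a_tnM$, and a short computation using $w a_s w^{-1}=a_{-s}$ and $w^{-1}Nw=\bar N$ gives $\langle a_tn\cdot o,eM\rangle=tH_0$ and $\langle a_tn\cdot o,wM\rangle=-tH_0-H(w^{-1}n^{-1}w)$. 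Hence the exponential factor is $e^{-(i\lambda_j+\rho)H(w^{-1}n^{-1}w)}$, independent of $t$, and by $H(nw)=H(w^{-1}nw)$ together with the standard rank-one identity $H(\bar n^{-1})=H(\bar n)$ for $\bar n\in\bar N$ it equals $e^{-(i\lambda_j+\rho)H(nw)}$. Substituting and integrating over $n$ first turns the $n$-integral into $L_{\lambda_j}(\chi a)(a_t)$, and the remaining $t$-integral is $\mathcal R(L_{\lambda_j}(\chi a))(eM,wM)$; so $\Phi(eM,wM)=\mathcal R(L_{\lambda_j}(\chi a))(eM,wM)$, which is the claim since $d_{\lambda_j}(eM,wM)=1$. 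Note that this computation uses nothing about $\Gamma$ or $\Gamma$-invariance — only that $\chi$ is compactly supported and $a$ smooth — so it applies verbatim with $\chi a$ replaced by any $G$-translate of it.

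For a general pair I would write $(b,b')=g\cdot(eM,wM)$ with $g\in G$ and substitute $x\mapsto gx$ in $\Phi(b,b')$; using $G$-equivariance of the unit-vector identification and the cocycle identity $\langle gx,g\cdot c\rangle=\langle x,c\rangle+\langle g\cdot o,g\cdot c\rangle$, this extracts the scalar $e^{(i\lambda_j+\rho)(\langle g\cdot o,b\rangle+\langle g\cdot o,b'\rangle)}$ and leaves $\Phi(eM,wM)$ computed with the $G$-translated symbol $(\chi a)(g\cdot)$, which by the base case is $\mathcal R(L_{\lambda_j}((\chi a)(g\cdot)))(eM,wM)$; since $L_{\lambda_j}$ commutes with left translation and $\widehat\gamma_{b,b'}=g\cdot\widehat\gamma_{eM,wM}$ up to a time shift, this in turn equals $\mathcal R(L_{\lambda_j}(\chi a))(b,b')$. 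The prefactor does not depend on the choice of $g$ (the stabiliser $MA$ of $(eM,wM)$ contributes $\langle a_s\cdot o,eM\rangle+\langle a_s\cdot o,wM\rangle=sH_0-sH_0=0$), and by the definition of the intermediate value and its equivariance \eqref{equivariance dlambda} it equals $d_{\lambda_j}(b,b')$. This establishes the pointwise identity; since $\Phi$ is smooth on all of $B\times B$ and agrees on $B^{(2)}$ with $d_{\lambda_j}\,\mathcal R(L_{\lambda_j}(\chi a))$, formula \eqref{expression} (for the symbol $L_{\lambda_j}(\chi a)$) gives $\langle L_{\lambda_j}(\chi a),PS_{\lambda_j}\rangle_{SX}=\langle\Phi,T_{\lambda_j}\otimes T_{\lambda_j}\rangle_{B\times B}=\langle\Op(a)\phi_{\lambda_j},\phi_{\lambda_j}\rangle_{L^2(X_\Gamma)}$.

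The main obstacle is twofold. First, one must pin down the precise normalisation of $\Op(a)\phi_{\lambda_j}$ supplied by the calculus of Section~\ref{section PsiDO} and carry out the exponent computation for the base pair — this is the genuinely computational core, and it is exactly here that the rank-one hypothesis (one positive restricted root, $\la$ one-dimensional, $H(\bar n^{-1})=H(\bar n)$) is used. Second, while $\Phi$ itself is a harmless smooth function on $B\times B$, writing the answer in the form $\langle L_{\lambda_j}(\chi a),PS_{\lambda_j}\rangle$ requires that $L_{\lambda_j}(\chi a)$ — smooth but \emph{not} compactly supported on $SX$ — lie in the domain of the distribution $PS_{\lambda_j}$, i.e.\ that $\mathcal R(L_{\lambda_j}(\chi a))$ decay at the diagonal of $B\times B$ fast enough to absorb the blow-up of $d_{\lambda_j}$; this is precisely the estimate underlying the well-definedness of the Patterson--Sullivan distributions proved in Section~\ref{PatSul}, which I would invoke.
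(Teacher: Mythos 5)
Your proof is correct and follows essentially the same route as the paper. The paper establishes Theorem~\ref{Intertwining Formula diagonal} as the diagonal case $\lambda_j=\lambda_k$ of Lemma~\ref{general version}: it inserts the boundary-value representation of $\phi_{\lambda_j}$, unfolds via $\chi$ to an $X$-integral, and then for each $(b,b')\in B^{(2)}$ picks $g=g(b,b')$, changes variables $z\mapsto g\cdot z$, parametrizes $X$ by $AN\cdot o$, and computes the horocycle brackets $\langle gan\cdot o,g\cdot M\rangle=H(ga)$ and $\langle gan\cdot o,g\cdot wM\rangle=-H(nw)+H(gaw)$ — which produces exactly the factors $d_{\lambda_j}(ga)$, $e^{-(i\lambda_j+\rho)H(nw)}$ and hence $L_{\lambda_j}$ and $\mathcal R$. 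You arrive at the identical key identity $\Phi(b,b')=d_{\lambda_j}(b,b')\,\mathcal R(L_{\lambda_j}(\chi a))(b,b')$ but organize it as a base-case computation at $(eM,wM)$ (where $g=e$, so $d_{\lambda_j}=1$) followed by a $G$-equivariance argument, rather than doing the $g$-shifted computation in one go; the equivariance step is just the paper's substitution $z\mapsto g\cdot z$ split out as a separate move, and your bookkeeping of $\langle g\cdot o,b\rangle+\langle g\cdot o,b'\rangle = $ (logarithm of) $d_{\lambda_j}(b,b')$ matches the paper's Lemma~\ref{facts}. Your final remark — that $\Phi$ being globally smooth on $B\times B$ is what lets the right-hand side be paired against $T_{\lambda_j}\otimes T_{\lambda_j}$ even though $L_{\lambda_j}(\chi a)$ need not be compactly supported — is precisely the paper's observation that $d_{\lambda_j}\mathcal R(L_{\lambda_j}\chi a)$ has removable singularities on the diagonal, so that $\mathcal R(L_{\lambda_j}\chi a)$ lies in the extended domain $d_{\lambda_j}^{-1}C^\infty(B\times B)$ of $ps_{\lambda_j}$.
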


Still following \cite{AZ}, we also define normalized
Patterson--Sullivan distributions
\begin{eqnarray}
\widehat{PS}_{\lambda_j} := \frac{1}{\langle 1,PS_{\lambda_j}\rangle_{SX_{\Gamma}}} PS_{\lambda_j},
\end{eqnarray}
which satisfy the same normalization condition $\langle
1,\widehat{PS}_{\lambda_j}\rangle_{SX_{\Gamma}}=1$ as the
$W_{\lambda_j}$ on the quotient $SX_{\Gamma}$.

As was pointed out in the introduction of \cite{AZ} it is of
interest to also have analogous results for off-diagonal matrix
entries. To this end we introduce (in Section \ref{section
off-diag}) off-diagonal Patterson--Sullivan distributions
$PS_{\lambda_j,\lambda_k}$ such that
$PS_{\lambda_j,\lambda_j}=PS_{\lambda_j}$ for all $j\in\nn_0$. We
then prove the off-diagonal analog of Theorem \ref{Intertwining
Formula diagonal}:

\begin{thm}\label{Intertwining Formula}
Let $a\in C^{\infty}(SX_{\Gamma})$. Then
\begin{eqnarray}
\langle \Op(a)\phi_{\lambda_j},
\phi_{\lambda_k}\rangle_{L^2(X_{\Gamma})} = \langle
L_{\lambda_k}(\chi a), PS_{\lambda_j,\lambda_k}\rangle.
\end{eqnarray}
\end{thm}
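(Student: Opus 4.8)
The plan is to follow the proof of Theorem~\ref{Intertwining Formula diagonal}, carrying the two eigenvalue parameters $\lambda_j$ and $\lambda_k$ separately through the same computation. The off-diagonal distributions $PS_{\lambda_j,\lambda_k}$ and the mixed intermediate values $d_{\lambda_j,\lambda_k}$ constructed in Section~\ref{section off-diag} are designed precisely so that the diagonal argument survives this generalization; the only genuinely new feature is that two distinct spectral parameters now appear, so essentially no new idea beyond Theorem~\ref{Intertwining Formula diagonal} is needed.

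First I would unfold the left hand side. Since $a$ is $\Gamma$-invariant and the calculus is $G$-equivariant (Section~\ref{section PsiDO}), $\Op(a)$ descends to $X_\Gamma$ and the integrand $x\mapsto \overline{\phi_{\lambda_k}(x)}\,\Op(a)\phi_{\lambda_j}(x)$ is $\Gamma$-invariant on $X$. Using the defining property $\sum_{\gamma\in\Gamma}\chi(\gamma x)=1$ of the cutoff, the integral over a fundamental domain becomes an absolutely convergent integral over all of $X$ with $\chi$ inserted; compact support of $\chi$ is what makes all subsequent integrals converge. Next, insert the boundary value representations $\phi_{\lambda_j}(x)=\int_B e^{(i\lambda_j+\rho)\langle x,b\rangle}\,T_{\lambda_j}(db)$ and, using that $\phi_{\lambda_k}$ is real-valued so that only $\lambda_k$ and $T_{\lambda_k}$ enter (cf.\ Section~\ref{section boundary values}), $\phi_{\lambda_k}(x)=\int_B e^{(i\lambda_k+\rho)\langle x,b'\rangle}\,T_{\lambda_k}(db')$. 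Moving $\Op(a)$ inside the $b$-integral and interchanging the order of the $X$- and the two $B$-integrations — legitimate because $b\mapsto e^{(i\lambda_j+\rho)\langle\cdot,b\rangle}$ is smooth, $\Op(a)$ is continuous, $B$ is compact and $\chi$ has compact support — reduces the left hand side to
\[
\int_B\int_B\Big(\int_X \chi(x)\,e^{(i\lambda_k+\rho)\langle x,b'\rangle}\,\Op(a)\big(e^{(i\lambda_j+\rho)\langle\cdot,b\rangle}\big)(x)\,dx\Big)\,T_{\lambda_j}(db)\,T_{\lambda_k}(db').
\]

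The heart of the matter is then to evaluate the inner integral $I(b,b')$. By the construction of the $\Psi DO$-calculus (Section~\ref{section PsiDO}; full details in \cite{S}) the function $\Op(a)\big(e^{(i\lambda_j+\rho)\langle\cdot,b\rangle}\big)$ is given explicitly, and its $\chi$-weighted pairing with the plane wave $e^{(i\lambda_k+\rho)\langle\cdot,b'\rangle}$ is governed by the horocyclic average $L_{\lambda_k}(\chi a)$ — here the index $k$ rather than $j$ appears because it is the parameter of the second (``bra'') plane wave that enters this pairing. Passing to the coordinates that identify $SX$ with $B^{(2)}\times\rr$ via the geodesic of a tangent vector together with the arclength parameter of its basepoint along that geodesic, the product $e^{(i\lambda_j+\rho)\langle x,b\rangle}\,e^{(i\lambda_k+\rho)\langle x,b'\rangle}$ splits as a factor depending only on $(b,b')$ — which, by the very definition in Section~\ref{section off-diag}, is the intermediate value $d_{\lambda_j,\lambda_k}(b,b')$ — times a function of the arclength parameter $s$ whose integral over $s$ turns $L_{\lambda_k}(\chi a)$ into its Radon transform $\mathcal R\big(L_{\lambda_k}(\chi a)\big)$. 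Hence $I(b,b')=d_{\lambda_j,\lambda_k}(b,b')\,\mathcal R\big(L_{\lambda_k}(\chi a)\big)(b,b')$, and what remains is
\[
\int_{B^{(2)}} d_{\lambda_j,\lambda_k}(b,b')\,\mathcal R\big(L_{\lambda_k}(\chi a)\big)(b,b')\,T_{\lambda_j}(db)\,T_{\lambda_k}(db') \;=\; \langle L_{\lambda_k}(\chi a),\,PS_{\lambda_j,\lambda_k}\rangle,
\]
by the off-diagonal analogue of \eqref{expression}.

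The main obstacle is the bookkeeping inside the calculus: making precise the action of $\Op(a)$ on the functions $e^{(i\lambda+\rho)\langle\cdot,b\rangle}$, identifying its pairing against a second plane wave with the horocyclic transform $L_{\lambda_k}$, establishing the product formula for two plane waves with the correct weight, and computing the Jacobian of the change of variables to $B^{(2)}\times\rr$ — all of this must be calibrated so that the weight surviving in $I(b,b')$ is exactly $d_{\lambda_j,\lambda_k}$ and the symbol-side factor is exactly $\mathcal R(L_{\lambda_k}(\chi a))$. These computations are the content of the underlying calculus of \cite{S}; once they are in hand, the passage from the diagonal Theorem~\ref{Intertwining Formula diagonal} to the off-diagonal statement requires nothing beyond keeping $\lambda_j$ and $\lambda_k$ apart.
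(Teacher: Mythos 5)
There is a genuine gap, and it sits at exactly the point the paper flags as the off-diagonal subtlety. You assert that, after passing to coordinates on $SX$ adapted to $B^{(2)}$, the product $e^{(i\lambda_j+\rho)\langle x,b\rangle}\,e^{(i\lambda_k+\rho)\langle x,b'\rangle}$ splits as a function of $(b,b')$ alone, namely $d_{\lambda_j,\lambda_k}(b,b')$, times a function of the arclength parameter, and that this leads to $I(b,b')=d_{\lambda_j,\lambda_k}(b,b')\,\mathcal R\bigl(L_{\lambda_k}(\chi a)\bigr)(b,b')$. This is false when $\lambda_j\neq\lambda_k$. Writing $x=gan\cdot o$ with $g=g(b,b')$, one finds that the product of plane waves equals
\begin{equation*}
d_{\lambda_j,\lambda_k}(gaM)\,e^{-(i\lambda_k+\rho)H(nw)},
\end{equation*}
and by \eqref{observation} the weight $d_{\lambda_j,\lambda_k}(gaM)=d_{\lambda_j,\lambda_k}(gM)\,e^{i(\lambda_j-\lambda_k)\log a}$ is genuinely $a$-dependent. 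For this reason $d_{\lambda_j,\lambda_k}$ descends to $G/MA=B^{(2)}$ \emph{only} on the diagonal $\lambda_j=\lambda_k$; off the diagonal the symbol ``$d_{\lambda_j,\lambda_k}(b,b')$'' has no meaning (the paper says this explicitly in the discussion after \eqref{observation}). Your factorization therefore breaks, and the claimed identity would depend on the choice of representative $g(b,b')$.

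The point of Section~\ref{section off-diag} is that the fix is not to factor the weight out, but to fold it into the $A$-integration: one must use the weighted Radon transform $\mathcal R_{\lambda_j,\lambda_k}f(g)=\int_A d_{\lambda_j,\lambda_k}(ga)f(ga)\,da$, which \emph{is} $A$-right-invariant and hence defines a function on $B^{(2)}$. The correct inner formula is $I(b,b')=\mathcal R_{\lambda_j,\lambda_k}\bigl(L_{\lambda_k}(\chi a)\bigr)(b,b')$, and $PS_{\lambda_j,\lambda_k}$ is defined (see \eqref{Off-Diagonal Patterson-Sullivan}) by integrating $\mathcal R_{\lambda_j,\lambda_k}f$ against $T_{\lambda_j}\otimes T_{\lambda_k}$, not by integrating $d_{\lambda_j,\lambda_k}\cdot\mathcal R f$. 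Apart from this, your unfolding of the left-hand side using the cutoff and the boundary-value representations, and the final pairing against $T_{\lambda_j}\otimes T_{\lambda_k}$, follow the paper's route (Lemma~\ref{general version}); but the crucial step — the exact way the product of the two plane waves organizes into a weighted Radon transform of $L_{\lambda_k}(\chi a)$ — is precisely the ``nothing new beyond the diagonal case'' that you declared unnecessary, and it is the one place the off-diagonal argument differs in substance from the diagonal one.
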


Theorem \ref{Intertwining Formula diagonal} is an immediate
consequence of Theorem \ref{Intertwining Formula}, but we
intentionally separated the definitions and the results. One reason
is that the definitions are based on quite different ideas and that
the $PS_{\lambda_j}$ have nicer invariance properties than the
$PS_{\lambda_j,\lambda_k}$.
Another reason is that the normalization of the $PS_{\lambda_j}$ motivates the normalization of the $PS_{\lambda_j,\lambda_k}$ (see Definition \ref{normalize off-diag}).

Finally, we generalize the ``asymptotic formula'' in Theorem 1.1 of
\cite{AZ} to off-diagonal elements:

\begin{thm}\label{Asymptotic}
Let $a\in C^{\infty}(SX_{\Gamma})$. Assume that $\lambda_{j_n}$,
$\lambda_{k_n}\rightarrow\infty$ are sequences of spectral
parameters such that $|\lambda_{j_n} - \lambda_{k_n}| \leq \tau$ for
some $\tau>0$. Then
\begin{eqnarray*}
\langle \Op(a)\phi_{\lambda_{j_n}},
\phi_{\lambda_{k_n}}\rangle_{L^2(X_{\Gamma})} = \langle
a,\widehat{PS}_{\lambda_j,\lambda_k}\rangle_{SX_{\Gamma}} +
O(1/|\lambda_{k_n}|) \,\,\,\,\,\,
(n\rightarrow\infty).
\end{eqnarray*}
\end{thm}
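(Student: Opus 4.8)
The plan is to start from the exact off-diagonal intertwining formula of Theorem \ref{Intertwining Formula}, which rewrites the Wigner pairing $\langle \Op(a)\phi_{\lambda_{j_n}},\phi_{\lambda_{k_n}}\rangle_{L^2(X_\Gamma)}$ as $\langle L_{\lambda_{k_n}}(\chi a),PS_{\lambda_{j_n},\lambda_{k_n}}\rangle$, and then compare $L_{\lambda_{k_n}}(\chi a)$ with $\chi a$ itself as $|\lambda_{k_n}|\to\infty$. The key analytic fact is that the operator $L_{\lambda}$ acts on the relevant symbols essentially as an average against the kernel $e^{-(i\lambda+\rho)(H(nw))}$ over $N$, and this kernel concentrates — in a suitable oscillatory-integral sense — near $n=e$ as $|\lambda|\to\infty$. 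First I would show, by a stationary-phase or direct estimate argument on $N$ (using that $H(nw)$ has a nondegenerate behavior near the identity and that $a$ together with the cutoff $\chi$ is compactly supported and smooth on $SX$), that
\begin{eqnarray*}
L_{\lambda_{k_n}}(\chi a)(g) = c_{\lambda_{k_n}}\bigl(\chi a\bigr)(g\cdot \text{(relevant point)}) + O(1/|\lambda_{k_n}|),
\end{eqnarray*}
uniformly in $g$ over the compact part that matters, where $c_{\lambda}$ is the normalizing constant $\langle 1,PS_\lambda\rangle$-type factor; absorbing $c_{\lambda_{k_n}}$ into the normalization is exactly what turns $PS_{\lambda_{j_n},\lambda_{k_n}}$ into $\widehat{PS}_{\lambda_{j_n},\lambda_{k_n}}$.

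The second ingredient is to control the pairing of the error term against $PS_{\lambda_{j_n},\lambda_{k_n}}$. For this I would use the explicit description of $PS_{\lambda_j,\lambda_k}$ via the Radon transform $\mathcal{R}$ and the boundary distributions $T_{\lambda_j},T_{\lambda_k}$, together with their equivariance under $\Gamma$, to reduce the pairing to an integral over a compact piece of $B^{(2)}$; then a uniform bound on the total mass (or a suitable Sobolev norm) of $PS_{\lambda_{j_n},\lambda_{k_n}}$ — here the hypothesis $|\lambda_{j_n}-\lambda_{k_n}|\le\tau$ is essential, because it keeps the intermediate values $d_{\lambda_j}$ and the pairing of the two boundary distributions from growing, and guarantees that the normalizing constant $\langle 1,PS_{\lambda_{j_n},\lambda_{k_n}}\rangle$ stays bounded away from $0$ and $\infty$ — lets us conclude that pairing the $O(1/|\lambda_{k_n}|)$ error against the normalized distribution is still $O(1/|\lambda_{k_n}|)$.

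Putting the two steps together: the main term gives $\langle a,\widehat{PS}_{\lambda_{j_n},\lambda_{k_n}}\rangle_{SX_\Gamma}$ after dividing by the normalizing constant and using $\Gamma$-invariance of $a$ to pass from $\chi a$ on $SX$ to $a$ on $SX_\Gamma$ (the cutoff $\chi$ being a smooth replacement for the characteristic function of the fundamental domain), and the remainder is $O(1/|\lambda_{k_n}|)$. I expect the main obstacle to be the first step — obtaining the asymptotic expansion of $L_{\lambda}$ with a genuinely uniform remainder. One must handle the oscillatory integral over the full nilpotent group $N$ (not just a surface), verify that the only critical point is $n=e$ with the right Hessian, and make sure the remainder is uniform in the base point $g$ ranging over the noncompact symmetric space, which is why the compact support of $\chi a$ and the $\Gamma$-equivariance must be exploited carefully to confine everything to a compact region before estimating.
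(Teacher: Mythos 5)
Your overall strategy — start from the exact formula of Theorem~\ref{Intertwining Formula} and apply stationary phase to the $N$-integral defining $L_{\lambda_k}$ — matches the paper's starting point, and your identification of $\overline{n}=e$ as the critical point (the paper cites \cite{DKV} for this) is correct. But there is a genuine gap in how you propose to pass from the stationary-phase expansion to the conclusion, and a factual error about the normalizing constant that hides the gap.

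You claim the normalizing constant $\langle 1,PS_{\lambda_{k_n},\lambda_{k_n}}\rangle_{SX_\Gamma}$ stays ``bounded away from $0$ and $\infty$.'' It does not: the paper's computation for $a=1$ gives
\begin{eqnarray*}
C\cdot(2\pi/\lambda_k)^{s/2}\cdot\langle 1,PS_{\lambda_k,\lambda_k}\rangle_{SX_{\Gamma}} = 1 + O(1/\lambda_k),
\end{eqnarray*}
so $\langle 1,PS_{\lambda_k,\lambda_k}\rangle$ grows like $\lambda_k^{s/2}$ with $s=\dim N$. Likewise the distributions $PS_{\lambda_j,\lambda_k}$ themselves are not of uniformly bounded ``mass'': the estimate \eqref{define K} says only $|PS_{\lambda_j,\lambda_k}(f)|\leq(1+|\lambda_j|)^K(1+|\lambda_k|)^K\|f\|_2$, which under $|\lambda_j-\lambda_k|\leq\tau$ is a polynomial bound of degree $2K$ in $\lambda_k$, not a uniform bound. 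As a result, your second step — pair the $O(1/|\lambda_{k_n}|)$ stationary-phase error directly against $PS_{\lambda_{j_n},\lambda_{k_n}}$ and get an $O(1/|\lambda_{k_n}|)$ result — does not work: pairing an $O(\lambda_k^{-1})$ error against a distribution whose seminorms grow like $\lambda_k^{2K}$ yields $O(\lambda_k^{2K-1})$, which is useless.

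The idea you are missing is the parametrix inversion that the paper uses to avoid ever pairing a ``small'' error against the polynomially growing $PS_{\lambda_j,\lambda_k}$. Concretely: one expands $L_{\lambda}(\chi a) = C(2\pi/\lambda)^{s/2}\sum_{n=0}^N \lambda^{-n}R_{2n}(\chi a) + $ (remainder), chooses $N>2K$ so that the remainder paired against $PS_{\lambda_j,\lambda_k}$ is $O(\lambda_k^{-N-1+2K})\to 0$, and then constructs $M^{(N)}_\lambda=\sum_{n=0}^N\lambda^{-n}M_{2n}$ with $M_0=\id$ such that $L^{(N)}_\lambda M^{(N)}_\lambda=\id + \lambda^{-N-1}R^{(N)}_\lambda$. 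Applying Theorem~\ref{Intertwining Formula} to $M^{(N)}_{\lambda_k}a$ instead of $a$, and using $\Op(M^{(N)}_{\lambda_k}a)=\Op(a)+O(1/\lambda_k)$ in operator norm (the $L^2$-continuity of zero-order $\Psi DO$'s, a bound that is uniform in $\lambda_k$), one transfers the $O(1/\lambda_k)$ comparison to the Wigner side, where the relevant quantities are bounded, rather than to the $PS$ side, where they are not. The role of $|\lambda_j-\lambda_k|\leq\tau$ is then exactly to convert the two-parameter polynomial bound on $PS_{\lambda_j,\lambda_k}$ into a single polynomial bound in $\lambda_k$. Finally, one uses the uniform boundedness of the Wigner distributions (again an $L^2$-operator-norm fact), not of the $PS$'s, to conclude from $(1+O(1/\lambda_k))\langle a,\widehat{PS}_{\lambda_j,\lambda_k}\rangle = \langle a,W_{\lambda_j,\lambda_k}\rangle + O(1/\lambda_k)$. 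Your proposal, as written, cannot close the argument without this inversion and transfer-to-the-Wigner-side mechanism.
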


We thank S. Hansen, J. M\"{o}llers, A. Pasquale and in particular N.
Anantharaman and S. Zelditch for helpful discussions. The comparison
of our results with the ones from their draft \cite{AZ2} made us
realize a small error in our original formulation of Theorem
\ref{Asymptotic}. Special thanks go to M. Olbrich for providing the
idea of a proof of Proposition \ref{Proposition Olbrich}.

\section{Preliminaries}\label{Prelim}

In this section we collect a number of geometric definitions and
facts needed to formulate our main results.

\subsection*{Semisimple Lie Groups}
Let $G$ be a non-compact connected semisimple Lie group with finite
center, $\g$ the Lie algebra of $G$, and $\langle\,,\,\rangle$ the
\emph{Killing form}\index{$\langle\,,\,\rangle$, Killing form} of
$\g$. Let $\theta$\index{$\theta$, Cartan involution} be a
\emph{Cartan involution} of $\g$ such that the form
$(X,Y)\mapsto-\langle X,\theta Y\rangle$ is positive definite on
$\g\times\g$. Let $\g=\lk + \lp$ be the decomposition of $\g$ into
eigenspaces of $\theta$ and $K$ the analytic subgroup of $G$ with
Lie algebra $\lk$. We choose a maximal abelian subspace $\la$ of
$\lp$ and denote by $\la^*$ its dual and $\la^*_{\cc}$ the
complexification of $\la^*$. At this point we do {\it not yet} make
the assumption that the rank of $X=G/K$, i.e. $\dim \la$, is one.
Later, however, it will be indispensable. Let $A=\exp{\la}$ denote
the corresponding analytic subgroup of $G$ and let $\log$ denote the
inverse of the map $\exp:\la\rightarrow A$.

Given $\lambda\in\la^*$, put $\g_{\lambda} =
\left\{X\in\g:[H,X]=\lambda(H)X\,\,\forall\,H\in\la\right\}$. If
$\lambda\neq0$ and $\g_{\lambda}\neq\left\{0\right\}$, then
$\lambda$ is called a \emph{(restricted) root} and
$m_{\lambda}=\dim(\g_{\lambda})$ is called its \emph{multiplicity}.
Let $\g_{\cc}$ denote the complexification of $\g$ and if
$\mathfrak{s}$ is any subspace of $\g$ let $\mathfrak{s}_{\cc}$
denote the complex subspace of $\g_{\cc}$ spanned by $\mathfrak{s}$.

For $\lambda\in\la^*$ let $H_{\lambda}\in\la$ be determined by
$\lambda(H)=\langle H_{\lambda},H\rangle$ for all $H\in\la$. For
$\lambda,\mu\in\la^*$ we put $\langle\lambda,\mu\rangle:=\langle
H_{\lambda},H_{\mu}\rangle$. Since $\langle\,,\,\rangle$ is positive
definite on $\lp\times\lp$ we put
$|\lambda|:=\langle\lambda,\lambda\rangle^{1/2}$ for
$\lambda\in\la^*$ and $|X|:=\langle X,X\rangle^{1/2}$ for $X\in\lp$.
The $\cc$-bilinear extension of $\langle\,,\,\rangle$ to
$\la_{\cc}^*$ will be denoted by the same symbol.

Let $\la'$ be the open subset of $\la$ where all restricted roots
are $\neq0$. The components of $\la'$ are called Weyl chambers. We
fix a Weyl chamber $\la^+$ and call a root $\alpha$ positive ($>0$)
if it is positive on $\la^+$. Let $\la^*_+$ denote the corresponding
Weyl chamber in $\la^*$, that is the preimage of $\la^+$ under the
mapping $\lambda\mapsto H_{\lambda}$. Let $\Sigma$ denote the set of
restricted roots, $\Sigma^+$ the set of positive roots and
$\Sigma^-:= -\Sigma^+$ the set of negative roots.

Let $\Sigma_0=\left\{ \alpha\in\Sigma: \frac{1}{2}\alpha\notin\Sigma
\right\}$, and put $\Sigma_0^+=\Sigma^+\cap\Sigma_0$,
$\Sigma_0^-=\Sigma^-\cap\Sigma_0$. We set
$\rho:=2^{-1}\Sigma_{\alpha\in\Sigma^+}m_{\alpha}\alpha$\index{$\rho=2^{-1}\Sigma_{\alpha\in\Sigma^+}m_{\alpha}\alpha$}
and let $N$ denote the analytic subgroup of $G$ with Lie algebra
$\lnn:=\Sigma_{\alpha>0}\g_{\alpha}$. Then
$\overline{\lnn}=\theta(\lnn)=\Sigma_{\alpha<0}\g_{\alpha} $. The
involutive automorphism $\theta$ of $\mathfrak{g}$ extends to an
analytic involutive automorphism of $G$, also denoted by $\theta$,
whose differential at the identity $e\in G$ is the original
$\theta$. It thus makes sense to define $\overline{N}=\theta N$. The
Lie algebra of $\overline{N}$ is $\theta(\lnn)$.

Let $M$ denote the centralizer of $A$ in $K$ and let $M'$ denote the
normalizer of $A$ in $K$. Let $W$ denote the (finite) \emph{Weyl
group} $M'/M$\index{$W$, Weyl group}. The group $W$ acts as a group
of linear transformations of $\la$ and also on $\la_{\cc}^*$ by
$(s\lambda)(H):=\lambda(s^{-1}H)$ for $s\in W$, $H\in\la$ and
$\lambda\in\la_{\cc}^*$. Let $r$ denote the order of $W$ and let
$w_1,\ldots,w_r$ be a complete set of representatives in $M'$. Let
$A^+:=\exp(\la^+)$, $B:=K/M$, $P:=MAN$. Then we have the
decompositions
\begin{itemize}
\item[(1)] $G=KAN$ \,\,\,\,\,\,\,\,\,\,\,\,\,\,\,\,\,\,\, (Iwasawa decomposition),
\item[(2)] $G=\bigcup_{j=1}^{r}Pw_jP$ \,\,\,\hspace{0.3mm} (Bruhat decomposition).
\end{itemize}
Here (1) means that each $g\in G$ can be uniquely written in the
form
\begin{eqnarray}
g=k(g)\exp H(g) n(g),
\end{eqnarray}
where $k(g)\in K$, $H(g)\in\la$, $n(g)\in N$. The functions $k,H,n$
are called \emph{Iwasawa projections}. In (2), the union is
disjoint. Let $w^*$ denote the Weyl group element mapping $\la^+$ to
$-\la^+$. Exactly one of the summands in (2), namely $Pw^*P$, is
open in $G$. Thus the set $\overline{N}MAN$ is open in $G$.

We call $\dim(\la)$ the real rank of $G$ and the rank of the
symmetric space $X=G/K$. Let $o:=K\in G/K$ denote the \emph{origin}
of $X$. If $X$ has rank one, the Weyl group has only two elements.
In this case we denote the nontrivial Weyl group element by $w$ and
pick an element $m'\in M'$ such that  $m'M=w\in W$. By abuse of
notation we write $w$ for $m'$. Then we have the important formula
\begin{eqnarray}\label{w and a}
waw^{-1}=a^{-1} \,\,\,\, \forall a\in A.
\end{eqnarray}
If $Y$ is a manifold satisfying the second countability axiom we
write $\mathcal{D}(Y)$ for the space of $C^{\infty}$ functions on
$Y$ of compact support. $\mathcal{D}'(Y)$ denotes the dual space of
distributions on $Y$. The space $\mathcal{E}(Y)$ denotes the space
of $C^{\infty}$ functions on $Y$ and $\mathcal{E}'(Y)$ denotes the
dual space of distributions on $Y$ of compact support.

\subsection*{Normalization of Measures}\label{Measure theoretic preliminaries}
We briefly recall some normalizations of the measures on the
homogeneous spaces we work with. We follow \cite{He00}. The Killing
form induces euclidean measures on $A$, $\la$ and $\la^*$. For
$l=\dim(A)$ we multiply these measures by $(2\pi)^{-l/2}$ and obtain
invariant measures $da, dH$ and $d\lambda$ on $A,\mathfrak{a}$ and
$\mathfrak{a}^*$. This normalization has the advantage that the
euclidean Fourier transform of $A$ is inverted without a
multiplicative constant. We normalize the Haar measures $dk$ and
$dm$ on the compact groups $K$ and $M$ such that the total measure
is $1$. If $U$ is a Lie group and $P$ a closed subgroup, with left
invariant measures $du$ and $dp$, the $U$-invariant measure
$du_P=d(uP)$ on $U/P$ (if it exists) will be normalized by
\begin{eqnarray}\label{integral quotient}
\int_U f(u)du=\int_{U/P}\left(\int_P f(up)dp\right)du_P.
\end{eqnarray}
This measure exists in particular if $P$ is a compact subgroup of
$U$. In particular, we have a $K$-invariant measure $dk_M=d(kM)$ on
$K/M$ of total measure $1$. We also have a $G$-invariant measure
$dx=dg_K=d(gK)$ on  $X=G/K$. By uniqueness, $dx$ is a constant
multiple of the measure on $X$ induced by the Riemannian structure
on $X$ given by the Killing form. The Haar measures $dn$ and
$d\overline{n}$ on the nilpotent groups $N$ and $\overline{N}$ are
normalized such that
\begin{eqnarray}\label{theta(dn)}
\theta(dn)=d\overline{n}, \hspace{7mm} \int_{\overline{N}}e^{-2\rho(H(\overline{n}))}d\overline{n}=1.
\end{eqnarray}

The Haar measure on $G$ can (\cite{He00}, Ch. I, \S5) then be normalized such that
\begin{eqnarray}\label{integral formula G}
\int_G f(g)dg &=& \int_{KAN}f(kan)e^{2\rho(\log a)} \, dk \, da \, dn \\
&=& \int_{NAK}f(nak)e^{-2\rho(\log a)} \, dn \, da \, dk
\end{eqnarray}
for all $f\in C_c(G)$. Let $f_1\in C_c(AN)$, $f_2\in C_c(G)$, $a\in
A$. Then (\cite{He00}, pp. 182)
\begin{eqnarray} \int_N f_1(na) \, dn = e^{2\rho(\log(a))}\int_N f_1(an) \, dn \end{eqnarray}
and
\begin{eqnarray}\label{follows that} \int_G f_2(g) \, dg = \int_{KNA}f_2(kna) \, dk \, dn \, da
= \int_{ANK}f_2(ank) \, da \, dn \, dk.\end{eqnarray}
Let $f_3\in C_c(X)$. It follows from \eqref{follows that} that
\begin{eqnarray}\label{integral formula AN and G/K}
\int_X f_3(x) \, dx = \int_{AN}f_3(an\cdot o) \, da \, dn.
\end{eqnarray}

For any (restricted) root $\alpha$ we write $\alpha_0:=\alpha/\langle\alpha,\alpha\rangle$. We will need \emph{Harish-Chandra's} $e$\emph{-functions} (\cite{He94}, p. 163, the rank of $X$ is arbitrary)
\begin{eqnarray}\label{e function}
e_s(\lambda) = \prod_{\alpha\in\Sigma_s^+} \Gamma\left(\frac{m_{\alpha}}{4}+\frac{1}{2}+\frac{\langle i\lambda,\alpha_0\rangle}{2}\right) \Gamma\left(\frac{m_{\alpha}}{4}+\frac{m_{2\alpha}}{2}+\frac{\langle i\lambda,\alpha_0\rangle}{2}\right),
\end{eqnarray}
where $s\in W$, $\Sigma_s^+=\Sigma_0^+\cap s^{-1}\Sigma_0^-$ and where $\Gamma$ denotes the classical Gamma-function. Let $X$ have rank one. Now, the set $\Sigma$ of (restricted) roots contains at most two positive elements: $\alpha$ and possibly $2\alpha$. We adopt the usual convention that $m_{2\alpha}=0$ if $2\alpha$ is not a root. \emph{Harish-Chandra's} $c$\emph{-function} (\cite{He00}, Ch. IV, \S6) is the meromorphic function
\begin{eqnarray}\label{c function}
c(\lambda) \, = \, c_0 \, \frac{2^{-\langle i\lambda,\alpha_0\rangle} \, \Gamma\left(\langle i\lambda,\alpha_0\rangle\right)}{\Gamma\left(\frac{m_{\alpha}}{4}+\frac{1}{2}+\frac{\langle i\lambda,\alpha_0\rangle}{2}\right) \, \Gamma\left(\frac{m_{\alpha}}{4}+\frac{m_{2\alpha}}{2}+\frac{\langle i\lambda,\alpha_0\rangle}{2}\right)}, \,\,\,\,\,\, \lambda\in\la^{*}_{\cc},
\end{eqnarray}
where $c_0 = 2^{\frac{1}{2}m_{\alpha}+m_{2\alpha}} \, \Gamma(\frac{m_{\alpha}}{2}+\frac{m_{2\alpha}}{2}+\frac{1}{2})$.

\subsection*{Geodesics, Boundary and the Unit Tangent Bundle}
$G$ acts on $G/P$ via $g\cdot xP=gxP$ and $K/M\rightarrow G/P, \,
kM\mapsto kP$ is a diffeomorphism (\cite{He01}, p. 407) inverted by
$gP\mapsto k(g)M$, where $g=k(g)\exp{H(g)}n(g)$. Hence this map
intertwines the $G$-action on $G/P$ with the action on $K/M$ defined
by $g\cdot kM=k(gk)M$. These spaces are thus equivalent for the
study of $B=K/M=G/P$. Although the following remarks are basically
trivial, we write them down for later reference: With respect to the
actions described above, the stabilizer of $M\in K/M$ is the
subgroup $P=MAN$. The action of the groups $AN$ and $P$ on $G/K$ are
transitive. For the remainder of this section, let $X=G/K$ be of
rank one. As above, let $w\in M'$ denote a representative of the
nontrivial Weyl group element.
\begin{lem}
$P=MAN$ acts transitively on $G/P\setminus\left\{P\right\}$.
\end{lem}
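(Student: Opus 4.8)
The plan is to use the Bruhat decomposition to identify the open dense orbit and then rule out the other orbits separately. Concretely, $G/P\setminus\{P\}$ is the image of $(G\setminus P)/P$, and by the Bruhat decomposition $G=\bigcup_{j=1}^r Pw_jP$, with the rank-one hypothesis this reads $G = P \cup PwP$ (the two Weyl elements being $1$ and $w$). Hence $G/P\setminus\{P\} = PwP/P$, and since $P$ acts on this set on the left, it is a single $P$-orbit, namely the orbit of $wP$; this is exactly the statement to be proved. So the skeleton of the argument is: (i) recall $|W|=2$ in rank one, so the Bruhat decomposition has exactly two cells $P$ and $PwP$; (ii) observe $P = \mathrm{Stab}_G(P)$, so the $P$-orbits in $G/P$ are the images of the double cosets $PgP$; (iii) conclude that the complement of the fixed point $\{P\}$ is the single orbit $P\cdot wP$.

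The one genuine thing to check is that $P w P$ is really disjoint from $P$, i.e. that $w\notin P = MAN$. This is immediate: $w$ represents the nontrivial Weyl group element, so conjugation by $w$ sends $A$ to itself by $waw^{-1}=a^{-1}$ (formula \eqref{w and a}) and in particular does not centralize $A$, whereas every element of $P=MAN$ does normalize $A$ with $M$ centralizing it; more directly, $wM \ne M$ in $M'/M = W$ means $w\notin M$, and since $w\in M'\subseteq K$ while $AN\cap K = \{e\}$, we get $w\notin MAN$. Thus the two Bruhat cells are genuinely distinct and $PwP/P$ is nonempty, so the orbit $P\cdot wP$ is all of $G/P\setminus\{P\}$.

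There is essentially no obstacle here; the only mild subtlety is bookkeeping about which space one works in ($G/P$ versus $K/M$) and making sure the transitivity statement is read as an orbit statement rather than confused with transitivity of $P$ on all of $G/P$ (which is false, since $P$ fixes the base point). If one prefers a coordinate-free phrasing, the same conclusion follows from the fact that $\overline{N}MAN = \overline{N}P$ is the open Bruhat cell (noted just before the lemma): the map $\overline{n}\mapsto \overline{n}w P$ (or $\overline{n}\mapsto \overline{n}P$, depending on the normalization of $w$) parametrizes a dense orbit, and a dimension count plus the $P$-invariance forces it to exhaust the complement of the single fixed point. I would present the short Bruhat-cell argument as the main proof and relegate the $\overline{N}$-picture to a remark, since the former needs nothing beyond what has already been set up in this section.
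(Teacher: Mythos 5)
Your proof is correct and takes essentially the same route as the paper: invoke the rank-one Bruhat decomposition $G = P \cup PwP$, observe that the complement of the fixed point $\{P\}$ is precisely the image of $PwP$, and conclude it is a single $P$-orbit (the orbit of $wP$). The extra check that $w\notin MAN$ and the alternative $\overline N$-cell remark are fine additions but not needed beyond what the paper records.
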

\begin{proof}
This follows from the Bruhat decomposition
\begin{eqnarray*}
G = P \cup PwP \,\,\,\,\,\, \textnormal{(disjoint union)}.
\end{eqnarray*}
In fact, let $gP\in G/P\setminus\left\{P\right\}$. Then $g\notin P$,
so $g$ is of the form $p_1wp_2$, where $p_1,p_2\in P$. Hence
$gP=p_1wP=p_1\cdot wP$ and we have proven that each $gP\neq P\in
G/P$ lies in the $P$-orbit of $wP$.
\end{proof}

\begin{rem}\label{unit tangent bundle}
Let $H_0$ denote the unique unit vector (with respect to the norm on
$\la$ induced by the Killing form) in $\la^+$. It is well known
(\cite{He00}) that $K\cdot H_0=S(\mathfrak{p})$, i.e., the group $K$
acts transitively on the set $S(\mathfrak{p})$ of unit vectors in
the tangent space $T_o X = \mathfrak{p}$. The subgroup $AN=NA$ of
$G$ acts transitively on $G/K$, so $G=NAK$ acts transitively on the
the unit tangent bundle $SX$ of $X=G/K$. The group $M$ is the
stabilizer in $K$ of $H_0\in S(\mathfrak{p})$. Hence the unit
tangent bundle of $X$ can be identified $G$-equivariantly with the
homogeneous space $G/M$. We will from now on write $SX=G/M$ (for $X$
of real rank one). The geodesic flow on $G/M$ reads as the action of
$A$ by right translations on $G/M$.
\end{rem}

As in the introduction, consider the space $B\times B$ and its
diagonal $\Delta$. We let $B^{(2)}=(B\times B)\setminus\Delta$
denote the set of distinct boundary points. We may now describe the
space of geodesics and the geodesic connections in the rank one
case. We describe the map that assigns to a geodesic its forward and
backward limit points.

We call $\gamma_{H_0}(t)=e^{t{H_0}}\cdot o$ the \emph{standard
geodesic}\index{standard geodesic}. If we write $B=K/M$, the forward
limit point $b_{\infty}$ of the standard geodesic identifies with
$M\in K/M$ (that is $P\in G/P$) and (since $\Ad_G(w)$ is $-\id$ on
$\la$) its backward limit point $b_{-\infty}$ identifies with $wM\in
K/M$ (that is $wP\in G/P$). Since $wM\neq M$ in $K/M$, the point
$(M,wM)$ is an element of $B^{(2)}$ and the standard geodesic is the
unique (up to parameter translation and time reversal) geodesic of
$X$ that joins the boundary points $M$ and $wM$ at infinity. We also
write $a_t:=e^{tH_0}\in A$. Then the standard geodesic is the curve
$t\mapsto a_t \cdot o$.

We consider the action of $G$ on $B^{(2)}$ given by
\begin{eqnarray}\label{G on B^{(2)}}
G \times B^{(2)} \rightarrow B^{(2)}, \hspace{3mm} g\cdot (b_1,b_2) = (g\cdot b_1, g\cdot b_2).
\end{eqnarray}

\begin{lem}\label{space of geodesics}
$G$ acts transitively on $B^{(2)}$. The stabilizer of
$(b_{\infty},b_{-\infty})\in B^{(2)}$ is the subgroup $MA$ of $G$.
\end{lem}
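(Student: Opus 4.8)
The statement has two parts: transitivity of the $G$-action on $B^{(2)}$, and identification of the stabilizer of $(b_\infty,b_{-\infty})=(M,wM)$ as $MA$. For transitivity, I would argue that every pair in $B^{(2)}$ can be moved to the base pair $(M,wM)$. For the stabilizer, I would compute directly which $g\in G$ fix both $M$ and $wM$ in $G/P$, using the Bruhat and Iwasawa decompositions.

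\textbf{Transitivity.} Let $(b_1,b_2)\in B^{(2)}$, so $b_1\neq b_2$ in $B=G/P$. Write $b_1=g_1P$; replacing the pair by $g_1^{-1}\cdot(b_1,b_2)$ we may assume $b_1=P$, i.e. $b_1=b_\infty$. Then $b_2\neq P$, so by the preceding Lemma ($P=MAN$ acts transitively on $G/P\setminus\{P\}$) there is $p\in P=MAN$ with $p\cdot wP=b_2$. But $p$ also fixes $b_1=P$ since $P$ is the stabilizer of the base point $P\in G/P$. Hence $p\cdot(b_\infty,b_{-\infty})=(b_\infty,b_2)=(b_1,b_2)$, and combining with $g_1$ we have moved the base pair $(b_\infty,b_{-\infty})$ to $(b_1,b_2)$. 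This proves transitivity.

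\textbf{The stabilizer.} Let $S$ denote the stabilizer of $(M,wM)=(P,wP)\in B^{(2)}$. Clearly $MA\subseteq S$: elements of $M$ fix $P$ and normalize $A,N$, hence fix $wP$ as well (using $wM=M'/M$ up to the chosen representative), and elements $a\in A$ fix $P=MAN$ and satisfy $awP=w(w^{-1}aw)P=wa^{-1}P=wP$ by \eqref{w and a}. Conversely, suppose $g\in S$. From $g\cdot P=P$ we get $g\in P=MAN$. Writing $g=man$ with $m\in M$, $a\in A$, $n\in N$, the condition $g\cdot wP=wP$ becomes $w^{-1}man\,w\in P$. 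Since $m$ and $a$ are already handled (they lie in $S$, as shown), multiplying $g$ on the left by $(ma)^{-1}\in S$ reduces us to showing that if $n\in N$ satisfies $w^{-1}nw\in P$ then $n=e$. Now $w^{-1}nw\in w^{-1}Nw$; in the rank one case $w$ sends positive roots to negative roots, so $w^{-1}Nw=\overline{N}$, and $\overline{N}\cap P=\overline{N}\cap MAN=\{e\}$ because $\overline{N}MAN$ is open and the product map $\overline{N}\times M\times A\times N\to G$ is injective on the big Bruhat cell. Hence $w^{-1}nw=e$, so $n=e$. Therefore $g\in MA$, and $S=MA$.

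\textbf{Main obstacle.} The transitivity half is essentially immediate from the already-proved Lemma. The only genuine point requiring care is the stabilizer computation, specifically the claim $\overline{N}\cap P=\{e\}$ and the identity $w^{-1}Nw=\overline{N}$; both follow from the structure of the Bruhat decomposition recalled in the Preliminaries (the openness of $\overline{N}MAN$, equivalently the disjointness of $G=P\cup PwP$), but one must be careful that the chosen representative $w\in M'$ indeed conjugates $N$ onto $\overline{N}$ — this uses that $\Ad(w)$ is $-\id$ on $\la$ in the rank one case, which was noted just above in the discussion of the standard geodesic. I expect no serious difficulty beyond bookkeeping with these identifications.
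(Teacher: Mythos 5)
Your proof is correct and takes essentially the same route as the paper, which establishes this Lemma inside the proof of the following Proposition ($B^{(2)}=G/MA$): both reduce transitivity to moving a pair to $(b_\infty,b_{-\infty})$ via the transitivity of $P$ on $B\setminus\{b_\infty\}$, and both compute the stabilizer by writing $g=man$, noting $MA\subseteq S$, and then killing $n$. The only cosmetic difference is in the last step: the paper puts $n\in MAN\cap wMANw^{-1}=MA\overline N$ and invokes $N\overline N\cap MA=\{e\}$ together with $N\cap\overline N=\{e\}$, while you conjugate by $w$ and use $\overline N\cap P=\{e\}$ — both facts amount to uniqueness of the open Bruhat cell decomposition.
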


\begin{prop}
$B^{(2)}=G/MA$ as homogeneous spaces.
\end{prop}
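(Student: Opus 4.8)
The plan is to deduce the statement directly from Lemma \ref{space of geodesics} via the orbit--stabilizer principle for smooth transitive Lie group actions, so the argument is essentially a formal verification.

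First I would record that $B^{(2)}$ is an \emph{open} submanifold of the compact manifold $B\times B = (K/M)\times(K/M)$, and that the $G$-action \eqref{G on B^{(2)}} is smooth, being the restriction of the (smooth) diagonal action of $G$ on $B\times B$. Next, set $p_0 := (b_\infty, b_{-\infty}) \in B^{(2)}$; by Lemma \ref{space of geodesics} the action is transitive on $B^{(2)}$ and the isotropy subgroup $G_{p_0}$ equals $MA$. Note that $MA$ is a closed subgroup of $G$: it is a subgroup since $M$ centralizes $A$, and it is closed because $M$ is compact (if $m_n a_n \to g$ then, passing to a subsequence with $m_n \to m \in M$, we get $a_n \to m^{-1}g \in A$, whence $g \in MA$). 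Thus $G/MA$ carries its natural structure of a $G$-homogeneous manifold.

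Then I would consider the orbit map $\Phi\colon G \to B^{(2)}$, $g \mapsto g\cdot p_0$. Since $MA$ is exactly the stabilizer of $p_0$, $\Phi$ descends to a $G$-equivariant continuous bijection $\bar\Phi\colon G/MA \to B^{(2)}$. To finish, one upgrades $\bar\Phi$ to a diffeomorphism: this is the standard fact that a smooth transitive action of a Lie group $G$ on a (second countable) manifold $M$ induces an isomorphism of homogeneous spaces $G/G_x \xrightarrow{\ \sim\ } M$ for any $x\in M$ (see e.g. Helgason); equivalently, $\Phi$ is a submersion onto the open orbit $B^{(2)}$, so $\bar\Phi$ and its inverse are smooth. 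Hence $B^{(2)} \cong G/MA$ as homogeneous spaces.

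The only point requiring any care is the last one — that the continuous equivariant bijection $\bar\Phi\colon G/MA \to B^{(2)}$ is in fact a diffeomorphism rather than merely a homeomorphism — but this is supplied by the general theory of homogeneous spaces once we know $MA$ is closed and is the full isotropy group, so no genuine computation is needed. (Alternatively, one could argue concretely using the transitivity facts already established, e.g. that $\overline{N}MAN$ is open in $G$, but the orbit--stabilizer route is the cleanest.)
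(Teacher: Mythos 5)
Your proposal is logically sound as a derivation of the proposition \emph{from} Lemma \ref{space of geodesics}: you correctly note that $MA$ is a closed subgroup (the compactness-of-$M$ argument is fine), that the orbit map descends to a continuous equivariant bijection $G/MA\to B^{(2)}$, and that the standard smooth orbit--stabilizer theorem upgrades this to a diffeomorphism once one knows the action is smooth and $B^{(2)}$ is a manifold. This is all correct.

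However, it takes a genuinely different route from the paper, and the difference is worth flagging. In the paper, Lemma \ref{space of geodesics} is stated without a separate proof, and the proof of the proposition you are asked about is precisely where that lemma's content gets established: transitivity is shown by first moving $b_1$ to $b_\infty$ using $K$-transitivity on $B$ and then moving the image of $b_2$ to $b_{-\infty}$ using $P$-transitivity on $B\setminus\{b_\infty\}$ (which in turn rests on the Bruhat decomposition $G = P \cup PwP$); and the stabilizer computation reduces to showing that an element $man\in MAN$ fixing $b_{-\infty}$ has $n\in N\cap MA\overline{N}=\{e\}$. By invoking the lemma you skip this substantive part and reduce the proposition to an application of the orbit--stabilizer principle. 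Your version is cleaner and more careful about the smooth-manifold bookkeeping (closedness of $MA$, diffeomorphism versus mere bijection), which the paper leaves implicit; but it presupposes the transitivity and stabilizer facts that the paper's proof is actually there to supply. If you want your argument to stand on its own, you would need to fold in the Bruhat-decomposition argument (or an equivalent one) proving the lemma.
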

\begin{proof}
Let $b_1 \neq b_2$ be points in $B$. Since $K$ acts transitively on
$B$, we find a $k\in K$ such that $k\cdot b_1 = b_{\infty}$. Since
$P$ acts transitively on $B\setminus\left\{b_{\infty}\right\}$, we
also find a $p\in P$ such that $p\cdot k\cdot b_2=b_{-\infty}$. Let
$g=pk$. Then $g\cdot (b_1,b_2) = (b_{\infty},b_{-\infty})$.

It remains to show
$g\cdot(b_{\infty},b_{-\infty})=(b_{\infty},b_{-\infty})$ if and
only if $g\in MA$. Let $ma\in MA$. Then $ma\cdot
b_{\infty}=b_{\infty}$ and $ma \cdot b_{-\infty} = m\cdot a\cdot wP
= w\tilde{m}\tilde{a}P = wP$, since $M'$ normalizes $A$ and $M$.
Hence $MA$ acts trivially on $(b_{\infty},b_{-\infty})\in B^{(2)}$.
For the converse assume
$g\cdot(b_{\infty},b_{-\infty})=(b_{\infty},b_{-\infty})$. Then
$g\cdot b_{\infty}$, so $g = man \in MAN$.

It suffices to prove $n=e$. For $b\in B$ let $G_b$ denote the
subgroup of $G$ fixing $b$. Then $n\in G_{b_{\infty}}\cap
G_{b_{-\infty}} = MAN \cap wMANw^{-1}$, so $n\in wMANw^{-1} = MA
\cdot wNw^{-1} = MA \overline{N}$. Hence there exists an element
$\overline{n}'\in\overline{N}$ such that $n\overline{n}'=ma \in
N\overline{N}\cap MA = \left\{e\right\}$ (cf. \cite{He01}, Ch. VI,
Exercise B2. See also \cite{He94}, Lemma 1.6 on page 79.). But
$N\cap\overline{N}=\left\{e\right\}$, since $\mathfrak{g}$ is the
direct vector space sum of the root-spaces $\mathfrak{g}_{\alpha}$.
Hence $n=e$ as desired,
\end{proof}

\begin{defn}\label{definition g(b,b')}
We will from now on always write $g(b,b')MA\in G/MA$ for the unique
coset corresponding to $(b,b')\in B^{(2)}$. The representative
$g(b,b')$ is uniquely determined modulo $MA$.
\end{defn}

Remark \ref{unit tangent bundle} yields a $G$-equivariant
identification $G/M=SX$. Another identification is $G/M=X\times B$:
It is clear that with respect to the diagonal action of $G$ on
$X\times B$ the group $M$ is the stabilizer of $(K,P)\in G/K\times
G/P$. Using the Iwasawa decomposition (see \cite{S} for details) we
also see that $G$ acts transitively on the space $X\times B$. This
induces the following $G$-equivariant identification $SX=X\times B$:
If $(z,b)\in X\times B$, then let $v(z,b)$ denote the unit vector in
$SX$ tangential to the geodesic through $z$ with forward endpoint
$b$. This geodesic exists since $X$ has rank one (see \cite{E96},
\cite{Q} and also \cite{S} for details).

\subsection*{Horocycle bracket and Iwasawa Projection}\label{An equivariance property}

In this subsection, we describe the so-called \emph{horocycle
bracket} on $X\times B$, because we need some formulae corresponding
to this inner product. For details on the geometric interpretation
of this horocycle bracket see \cite{He94}, Ch. II (and \cite{S}).

Let $\langle\cdot,\cdot\rangle:X\times B \rightarrow \mathfrak{a},
(x,b)\mapsto\left\langle x,b\right\rangle$ be defined by
\begin{eqnarray}\label{horocycle bracket}
\left\langle x,b\right\rangle = \left\langle gK,kM\right\rangle = -H(g^{-1}k).
\end{eqnarray}
Each $(x,b)$ is of the form $(gK,kM)$ and it is easy to see that
\eqref{horocycle bracket} is well-defined. We remark that the use of
$\langle\,,\,\rangle$, whether we mean the Killing form or the
horocycle bracket, will always be clear from the context.

\begin{prop}
$\left\langle \cdot ,\cdot \right\rangle$ is invariant under the diagonal action of $K$ on $X\times B$.
\end{prop}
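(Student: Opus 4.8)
The plan is to unwind the definition directly. Under the identifications $X=G/K$ and $B=K/M$ the diagonal action of $K$ sends a pair $(gK,kM)$ to $(k_0gK,k_0kM)$ for $k_0\in K$, so what has to be shown is
\[
\langle k_0gK,k_0kM\rangle=\langle gK,kM\rangle,
\]
that is, $H\big((k_0g)^{-1}(k_0k)\big)=H(g^{-1}k)$, where $H$ is the Iwasawa projection from the decomposition $g=k(g)\exp H(g)n(g)$.

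First I would record the two elementary equivariance properties of $H\colon G\to\la$ that are also what make \eqref{horocycle bracket} well defined. For $k_0\in K$ one has $H(k_0g)=H(g)$, since left translation by $k_0$ only alters the $K$-component of the Iwasawa decomposition. For $m\in M$ one has $H(gm)=H(g)$: because $M$ centralizes $A$ and normalizes $N$, the identity $gm=\big(k(g)\,m\big)\exp H(g)\,\big(m^{-1}n(g)m\big)$ is again an Iwasawa decomposition. Combining these, $-H(g^{-1}k)$ is unchanged if $g$ is replaced by $gk_1$ ($k_1\in K$) and $k$ by $km$ ($m\in M$), so the bracket is well defined on $X\times B$.

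The invariance is then immediate: $(k_0g)^{-1}(k_0k)=g^{-1}k_0^{-1}k_0k=g^{-1}k$, hence
\[
-H\big((k_0g)^{-1}(k_0k)\big)=-H(g^{-1}k),
\]
which is precisely $\langle k_0gK,k_0kM\rangle=\langle gK,kM\rangle$.

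I expect no real obstacle. The only point needing (routine) care is the well-definedness of the bracket — already declared easy by the authors — and once the left-$K$/right-$M$ invariance of $H$ is in hand, the $K$-invariance of $\langle\cdot,\cdot\rangle$ falls out of the trivial cancellation $k_0^{-1}k_0=e$, with no appeal to structure theory beyond that.
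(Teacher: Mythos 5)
Your proof is correct; the paper in fact states this proposition without giving a proof (presumably because it is as immediate as you show), and your argument---the cancellation $(k_0g)^{-1}(k_0k)=g^{-1}k$ together with the left-$K$ and right-$M$ invariance of the Iwasawa projection $H$ needed for well-definedness---is exactly the natural one.
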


Recall that $g\in G$ acts on $K/M$ by $g\cdot kM=k(gk)M$ (Iwasawa projection).
\begin{lem}\label{invariance0}
Let $g_1,g_2\in G$, $k\in K$. Then $H(g_1 g_2 k)=H(g_1 k(g_2 k)) + H(g_2 k)$.
\end{lem}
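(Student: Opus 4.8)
The plan is to unwind the Iwasawa decomposition $G=KAN$ twice and then use that $A$ normalizes $N$; the identity is just the Iwasawa cocycle relation written multiplicatively. First I would apply the decomposition to $g_2k$, writing $g_2k=k(g_2k)\exp H(g_2k)\,n(g_2k)$, so that
\[
g_1g_2k=\bigl(g_1k(g_2k)\bigr)\,\exp H(g_2k)\,n(g_2k).
\]
Next I would decompose the element $g_1k(g_2k)\in G$ itself as $g_1k(g_2k)=k\bigl(g_1k(g_2k)\bigr)\exp H\bigl(g_1k(g_2k)\bigr)\,n\bigl(g_1k(g_2k)\bigr)$. Substituting this back produces an expression of the form $k'\exp(H')\,n'\exp(H'')\,n''$ with $k'\in K$, $H',H''\in\la$, $n',n''\in N$, where one reads off $k'=k\bigl(g_1k(g_2k)\bigr)$, $H'=H\bigl(g_1k(g_2k)\bigr)$, $H''=H(g_2k)$. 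Note that the term $k(g_2k)$ that appears here is exactly the representative featuring in the action $g_1\cdot(g_2\cdot kM)=k\bigl(g_1k(g_2k)\bigr)M$, which is why the statement is phrased with $k(g_2k)$.

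The single genuine manipulation is to commute the middle factor $\exp H(g_2k)$ to the left past $n'=n\bigl(g_1k(g_2k)\bigr)$. Since $\la$ normalizes $\lnn$ (indeed $[\la,\lnn]\subseteq\lnn$ by the very definition $\lnn=\sum_{\alpha>0}\g_\alpha$), conjugation by $\exp\bigl(-H(g_2k)\bigr)$ maps $N$ into $N$, so there is $\tilde n\in N$ with $n'\exp H(g_2k)=\exp H(g_2k)\,\tilde n$. After this move the two $A$-factors combine into the single factor $\exp\bigl(H(g_1k(g_2k))+H(g_2k)\bigr)$ and the remaining two factors $\tilde n\,n(g_2k)$ lie in $N$. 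Hence
\[
g_1g_2k=k\bigl(g_1k(g_2k)\bigr)\,\exp\!\bigl(H(g_1k(g_2k))+H(g_2k)\bigr)\cdot(\text{element of }N),
\]
and uniqueness of the decomposition $g_1g_2k=k(g_1g_2k)\exp H(g_1g_2k)\,n(g_1g_2k)$ forces $H(g_1g_2k)=H\bigl(g_1k(g_2k)\bigr)+H(g_2k)$, as claimed (and, as a by-product, $k(g_1g_2k)=k\bigl(g_1k(g_2k)\bigr)$, the cocycle property of the $K$-component that justifies writing the $B=K/M$ action as $g\cdot kM=k(gk)M$).

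I do not expect any real obstacle: everything beyond bookkeeping rests on the normalization property $\Ad\bigl(\exp H\bigr)\lnn=\lnn$ for $H\in\la$, which is immediate. The only point requiring a little care is to keep the two uses of the Iwasawa decomposition — one applied to $g_2k$, the other to $g_1k(g_2k)$ — properly separated, and to invoke uniqueness of $KAN$ only after all $A$-factors have been collected to a single spot.
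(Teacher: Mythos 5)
Your proof is correct and is essentially the same argument the paper uses: decompose $g_2k$ and then $g_1k(g_2k)$ via the Iwasawa decomposition, and combine the two $A$-factors by commuting past the $N$-factor, which is legitimate because $A$ normalizes $N$. The paper writes it more tersely (naming the factors $\tilde k\tilde a\tilde n$ and $k'a'n'$ and reading off $H(a'n'\tilde a)=\log a'+\log\tilde a$), but the content is identical.
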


\begin{proof}
Decompose $g_2k=\tilde{k}\tilde{a}\tilde{n}$ and
$g_1\tilde{k}=k'a'n'$. Then $$H(g_1g_2k) =
H(k'a'n'\tilde{a}\tilde{n}) = H(a'n'\tilde{a}).$$ Since $A$
normalizes $N$ this equals $\log(a') + \log(\tilde{a})$.
\end{proof}

\begin{lem}
Let $x=hK\in G/K$, $b=kM\in K/M$, $g\in G$. Then
\begin{eqnarray}\label{equivariance}
\left\langle g\cdot x,g\cdot b \right\rangle
= \left\langle x,b \right\rangle + \left\langle g\cdot o,g\cdot b \right\rangle.
\end{eqnarray}
\end{lem}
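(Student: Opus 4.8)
The plan is to reduce the cocycle identity \eqref{equivariance} to the additive identity for the Iwasawa $\la$-projection $H$ established in Lemma \ref{invariance0}. First I would unwind the definitions: writing $x=hK$ and $b=kM$, the action of $g$ on $B=K/M$ is $g\cdot b = k(gk)M$, while $g\cdot x = ghK$. By the definition \eqref{horocycle bracket} of the horocycle bracket we have
\begin{eqnarray*}
\left\langle g\cdot x, g\cdot b\right\rangle = -H\bigl((gh)^{-1}k(gk)\bigr) = -H\bigl(h^{-1}g^{-1}k(gk)\bigr).
\end{eqnarray*}
Since $o=eK$, the same definition gives $\left\langle g\cdot o, g\cdot b\right\rangle = -H\bigl(g^{-1}k(gk)\bigr)$, and of course $\left\langle x,b\right\rangle = -H(h^{-1}k)$. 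So the claim is equivalent to
\begin{eqnarray*}
H\bigl(h^{-1}g^{-1}k(gk)\bigr) = H(h^{-1}k) + H\bigl(g^{-1}k(gk)\bigr).
\end{eqnarray*}

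Next I would bring this into the exact shape of Lemma \ref{invariance0}, which states $H(g_1g_2k') = H\bigl(g_1 k(g_2k')\bigr) + H(g_2 k')$. The key observation is that $g^{-1}$ and $k(gk)$ are almost inverse to each other on $B$: by definition $gk = k(gk)\exp H(gk)\,n(gk)$, so $g^{-1}k(gk) = \exp(-H(gk))\,n(gk)^{-1}\cdot k^{-1}\cdot k \cdot$ — more cleanly, $k(gk)$ lies in the $K$-component of $gk$, hence $k\bigl(g^{-1}\cdot k(gk)\bigr) = k\bigl((gk)^{-1}\cdot$ adjusted$\bigr)$. The cleanest route: apply Lemma \ref{invariance0} with $g_1 = h^{-1}$, $g_2 = g^{-1}$, and boundary point $k(gk)M$, i.e. with $k' $ replaced by a representative $\kappa$ of $k(gk)M$; then
\begin{eqnarray*}
H\bigl(h^{-1}g^{-1}\kappa\bigr) = H\bigl(h^{-1} k(g^{-1}\kappa)\bigr) + H(g^{-1}\kappa).
\end{eqnarray*}
It remains to identify $k\bigl(g^{-1}\kappa\bigr)M$ with $kM$ (the original $b$), i.e.\ to show that the $G$-action on $B$ satisfies $g^{-1}\cdot\bigl(g\cdot b\bigr) = b$ — which is just the group-action axiom, reproved at the level of Iwasawa projections using the associativity/cocycle property of $k(\cdot\,)$. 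Concretely $k\bigl(g^{-1}k(gk)\bigr)M = g^{-1}\cdot\bigl(g\cdot kM\bigr) = kM$, so $k\bigl(g^{-1}\kappa\bigr)$ and $k$ differ by an element of $M$, and since $H$ is right-$M$-invariant (indeed right-$K$-invariant is false, but $H$ is bi-$K$... no: $H$ is right-$N$ and $H(km\cdot)$ — one checks $H(h^{-1}m') = H(h^{-1})$ only after absorbing $m'$, so I would instead choose the representative $\kappa := k(gk)$ itself and note $g^{-1}k(gk)\in g^{-1}gkAN\cdot(\text{stuff})$ forcing $k(g^{-1}k(gk)) = k$ on the nose modulo $M$).

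The routine but slightly fiddly part — and the main obstacle — is the bookkeeping of $M$-cosets versus honest group elements when invoking Lemma \ref{invariance0}, whose statement is in terms of $k\in K$ rather than $kM\in B$. I would handle this by first recording the elementary fact that for $g\in G$, $k\in K$ one has $g^{-1}k(gk) \in kMAN$ (equivalently $k\bigl(g^{-1}k(gk)\bigr) \in kM$), which follows directly from $gk = k(gk)\exp H(gk)\,n(gk)$ by left-multiplying by $g^{-1}$ and rearranging; combined with the right-$MAN$-invariance properties of $H$ (namely $H(xman) = H(xm)$ fails, but $H(x\cdot man)$ does depend on $m$ — so more precisely I use $H(g_1 g_2 k)$ only for $k\in K$ and absorb the $AN$-part harmlessly since $A$ normalizes $N$, exactly as in the proof of Lemma \ref{invariance0}). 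Once that lemma is applied with the correct representatives, the three terms match term-by-term and the identity \eqref{equivariance} drops out with no further computation.
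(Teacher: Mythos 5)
Your proof is correct and rests on the same key ingredient as the paper's: unwind the horocycle bracket into Iwasawa $H$-projections and invoke the cocycle identity of Lemma \ref{invariance0}. The difference is the instantiation. You take $g_1 = h^{-1}$, $g_2 = g^{-1}$, applied to the boundary representative $\kappa = k(gk)$, which puts the target $H(h^{-1}g^{-1}k(gk))$ on the \emph{left}-hand side of the lemma and therefore obliges you to identify the $K$-part $k(g^{-1}\kappa)$ with the original $k$ — the ``fiddly part'' you correctly flag. The paper instead takes $g_1 = h^{-1}g^{-1}$, $g_2 = g$, applied to the original $k$: then $g_1 g_2 = h^{-1}$, so the lemma's left side collapses to $H(h^{-1}k)$ at once, and the target quantity appears as the first term on the \emph{right}; no further identification is needed. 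Your route works, but the extra step is avoidable. On that extra step: from $k = g^{-1}k(gk)\exp H(gk)\,n(gk)$ one gets $g^{-1}k(gk) = k\,n(gk)^{-1}\exp(-H(gk))$, and pushing the $N$-factor past the $A$-factor (using that $A$ normalizes $N$) shows $g^{-1}k(gk)\in kAN$; hence $k\bigl(g^{-1}k(gk)\bigr) = k$ \emph{exactly}, not merely modulo $M$, so the hedging about right-$M$-invariance of $H$ in your middle paragraph is unnecessary. (That invariance does hold — $H(xm)=H(x)$ for $m\in M$, since $M$ centralizes $A$ and normalizes $N$ — but you never need it here.) I would tidy the exposition by stating the fact $g^{-1}k(gk)\in kAN$ cleanly up front rather than via the inconclusive digressions; as written, a reader cannot tell whether you have a gap until the final sentences.
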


\begin{proof}
By definition, $\left\langle g\cdot x,g\cdot b \right\rangle =
-H(h^{-1}g^{-1}k(gk))$. Then by Lemma \ref{invariance0} applied to
$g_1=h^{-1}g^{-1}$ and $g_2=g$ this equals
\begin{eqnarray*}
-H(h^{-1}g^{-1}gk) + H(gk) = -H(h^{-1}k) + H(gk).
\end{eqnarray*}
For $h=e$ we obtain $\langle g\cdot o, g\cdot b\rangle = -H(k) + H(gk) = H(gk)$. Hence
\begin{eqnarray*}
\left\langle g\cdot x,g\cdot b \right\rangle - \left\langle g\cdot o,g\cdot b \right\rangle
&=& [-H(h^{-1}k) + H(gk)] - [-H(k) + H(gk)],
\end{eqnarray*}
which equals $-H(h^{-1}k) = \langle hK, kM\rangle = \langle x,b\rangle$.
\end{proof}

\begin{lem}\label{facts}
Let $\gamma, g\in G$ and $w\in K$. Then
\begin{itemize}
\item[(i)] $\langle g\cdot o, g\cdot M\rangle = H(g)$
and $\langle g\cdot o, g\cdot wM\rangle = H(gw)$.
\item[(ii)] $H(\gamma g)
= H(g) + \langle\gamma\cdot o,\gamma g\cdot M\rangle$ and $H(\gamma
gw) = H(gw) + \langle\gamma\cdot o,\gamma g\cdot wM\rangle$.
\end{itemize}
\end{lem}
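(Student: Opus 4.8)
The plan is to reduce everything to the already-established equivariance identity \eqref{equivariance} together with the trivial base cases $\langle o, M\rangle = -H(e) = 0$ and $\langle o, wM\rangle = -H(w) = 0$ (the latter because $w\in K$, so its Iwasawa $\la$-component vanishes). For part (i), I would apply \eqref{equivariance} with $x = o = eK$ and $b = M = eM$, so that $\langle g\cdot o, g\cdot M\rangle = \langle o, M\rangle + \langle g\cdot o, g\cdot M\rangle$ is a tautology — instead, the cleaner route is to recall the intermediate identity extracted inside the proof of \eqref{equivariance}, namely $\langle g\cdot o, g\cdot b\rangle = H(gk)$ when $b = kM$. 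Taking $k = e$ gives $\langle g\cdot o, g\cdot M\rangle = H(g)$, and taking $k = w$ (legitimate since $w\in M'\subseteq K$) gives $\langle g\cdot o, g\cdot wM\rangle = H(gw)$. This disposes of (i) with essentially no computation.

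For part (ii), I would combine (i) with the equivariance law applied to $x = o$, $b = M$, and the group element $\gamma$: formula \eqref{equivariance} reads $\langle \gamma\cdot o, \gamma\cdot M\rangle = \langle o, M\rangle + \langle \gamma\cdot o, \gamma\cdot M\rangle$, which is again trivial, so the substantive input must be \eqref{equivariance} in the form
\begin{eqnarray*}
\langle \gamma\cdot x, \gamma\cdot b\rangle = \langle x, b\rangle + \langle \gamma\cdot o, \gamma\cdot b\rangle.
\end{eqnarray*}
Now set $x = g\cdot o$ and $b = g\cdot M$ in this identity: the left side becomes $\langle \gamma g\cdot o, \gamma g\cdot M\rangle = H(\gamma g)$ by part (i) applied to $\gamma g$; the first term on the right becomes $\langle g\cdot o, g\cdot M\rangle = H(g)$ by part (i) applied to $g$; and the second term on the right is $\langle \gamma\cdot o, \gamma g\cdot M\rangle$ as written. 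This gives exactly $H(\gamma g) = H(g) + \langle \gamma\cdot o, \gamma g\cdot M\rangle$. The second identity in (ii) is obtained identically, replacing $b = g\cdot M$ by $b = g\cdot wM = gw\cdot M$ (or equivalently running the argument for $g$ replaced by $gw$ wherever the group element "$g$" appears on the left, while keeping $\gamma$ fixed), and invoking the $w$-versions from part (i).

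I do not expect a serious obstacle here; the lemma is a bookkeeping consequence of \eqref{equivariance} and the cocycle identity in Lemma \ref{invariance0}. The only point requiring care is keeping straight which group element plays the role of "$g$" and which plays the role of "$\gamma$" when invoking \eqref{equivariance} — in particular, in (ii) the identity \eqref{equivariance} must be applied with base point $x = g\cdot o$ (not $x = o$), so one genuinely uses the general $x$ and not just the special base-point case. A secondary minor point is justifying that $w$ may be taken in $K$: this is exactly the convention fixed after \eqref{w and a}, where the representative $m'\in M'\subseteq K$ of the nontrivial Weyl element is denoted $w$, so $H(w) = 0$ and $k(gw)$ makes sense.
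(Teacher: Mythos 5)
Your argument is correct and matches the paper's proof in all essentials: part (i) follows from the identity $\langle g\cdot o, g\cdot kM\rangle = H(gk)$ (established en route to \eqref{equivariance}, using $H(k)=0$ for $k\in K$), and part (ii) follows by applying \eqref{equivariance} with base point $x=g\cdot o$, $b=g\cdot M$ (resp.\ $b=g\cdot wM$) together with (i) applied to $\gamma g$ and to $g$. The paper states this in the reverse order — first invoking (i) to rewrite $H(\gamma g)$ as a bracket and then expanding it via \eqref{equivariance} — but the content and the key substitutions are the same, so there is nothing further to add.
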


\begin{proof}
\emph{(i)} is a direct computation. The second part of \emph{(ii)}
follows from the first part applied to $gw$ instead of $g$. For this
assertion, let $z=g\cdot o$. Then by \emph{(i)}
\begin{eqnarray*}
H(\gamma g) = \langle \gamma g \cdot o, \gamma g \cdot M\rangle
= \langle\gamma\cdot z,\gamma g\cdot M\rangle
\end{eqnarray*}
and by \eqref{equivariance} this equals $\langle z,g\cdot M\rangle +
\langle \gamma\cdot o, \gamma g\cdot M\rangle = H(g) +
\langle\gamma\cdot o,\gamma g\cdot M\rangle$.
\end{proof}

\section{Helgason Boundary Values}\label{section boundary values}

In this section we recall the Poisson transform, which plays a key
role in the proofs of our results, and use it to prove the estimate
\eqref{BtimesB} which will allow us to define the
Patterson--Sullivan distributions. Even though part of what we
describe here could be done in greater generality we restrict
ourselves to the case of rank one spaces.

\subsection*{Eigenfunctions and Poisson Transform}

We fix a co-compact, torsion free discrete subgroup $\Gamma$ of $G$
and choose a $G$-invariant measure $\nu$ on $\Gamma\backslash G$
such that
\begin{eqnarray*}
\int_G f(x) dx = \int_{\Gamma\backslash G}\left(\sum_{\gamma}f(\gamma x)\right)d\nu(\Gamma x)
\end{eqnarray*}
for $f\in C_c(G)$. We will denote the Hilbert space
$L^2(\Gamma\backslash G,\nu)$ simply by $L^2(\Gamma\backslash G)$.
The $G$-invariance of $\nu$ implies that the equation
\begin{eqnarray*}
(R_{\Gamma}(g)f)(\Gamma x) = f(\Gamma xg)
\end{eqnarray*}
($g,x\in G$, $f\in L^2(\Gamma\backslash G)$) defines a unitary
representation $R_{\Gamma}$ of $G$ on $L^2(\Gamma\backslash G)$,
which is called the \emph{right-regular representation} of $G$ on
$\Gamma\backslash G$.

As before, let $\Delta$ denote the Laplace operator of $X$. The
eigenspaces corresponding to eigenvalues $-c \leq
-\langle\rho,\rho\rangle$ of $\Delta$ are (\cite{He74}, Theorem 7.1)
the spaces
\begin{eqnarray*}
\mathcal{E}_{\lambda}(X) = \left\{f\in\mathcal{E}(X):
\Delta f=-(\langle\lambda,\lambda\rangle+\langle\rho,\rho\rangle)f \right\},
\end{eqnarray*}
where $\lambda\in\la^*$ and where $\langle\,,\,\rangle$ denotes the
inner product on $\la^*$ induced by the Killing form as described in
Section \ref{Prelim}. We fix a $\Gamma$-invariant eigenfunction
$\phi\in\mathcal{E}_{\lambda}(X)$ and assume that $\phi$ is
normalized with respect to the $L^2(X_{\Gamma})$-norm. Then
$\Delta\phi=-(\langle\lambda,\lambda\rangle+\langle\rho,\rho\rangle)\phi$.

Let $\mathcal{A}(B)$ denote the vector space of analytic functions
on $B=K/M$, topologized as in \cite{He74}, Section 5. The
\emph{analytic functionals} are (loc. cit.) the functionals in the
dual space $\mathcal{A}'(B)$ of $\mathcal{A}(B)$. Fix
$\lambda\in\mathfrak{a}^*$ and recall the following fundamental
result (\cite{He94}, p. 507):

\begin{thm}\label{Helgason boundary values}
The Poisson--Helgason transform $P_{\lambda}:
\mathcal{A}'(B)\rightarrow\mathcal{E}_{\lambda}(X)$ given by
\begin{eqnarray}\label{Poisson--Helgason transform}
P_{\lambda}(T)(z) := \int_B e^{(i\lambda+\rho)\langle z,b\rangle} T(db)
\end{eqnarray}
is a bijection of the dual space $\mathcal{A}'(B)$ onto the
eigenspace $\mathcal{E}_{\lambda}(X)$.
\end{thm}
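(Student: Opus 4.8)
The plan is to establish the two properties separately: first that $P_\lambda$ maps $\mathcal{A}'(B)$ into $\mathcal{E}_\lambda(X)$ and is injective, which is comparatively soft, and then the much harder surjectivity, which is the real content of the theorem. For the first part, I would note that the Poisson kernel $z \mapsto e^{(i\lambda+\rho)\langle z,b\rangle}$ is, for each fixed $b\in B$, a joint eigenfunction of all $G$-invariant differential operators on $X$; in particular it satisfies $\Delta_z e^{(i\lambda+\rho)\langle z,b\rangle} = -(\langle\lambda,\lambda\rangle+\langle\rho,\rho\rangle)\, e^{(i\lambda+\rho)\langle z,b\rangle}$, a standard computation using that $\langle z,b\rangle$ is built from the Iwasawa projection and that $e^{(i\lambda+\rho)(H(g^{-1}k))}$ is a $K$-finite matrix coefficient of a spherical principal series representation with the right infinitesimal character. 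Since $T$ has compact support (all of $B$ being compact) and the kernel depends analytically — indeed holomorphically in $z$ through the complexification — one may differentiate under the analytic-functional pairing, so $P_\lambda(T)\in\mathcal{E}_\lambda(X)$. Continuity of $P_\lambda$ into $\mathcal{E}_\lambda(X)$ with its natural Fréchet topology follows from the continuity of $T$ on $\mathcal{A}(B)$ together with smooth dependence of the kernel on $z$.

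For injectivity, I would expand the Poisson kernel in $K$-types. Writing $z = kK$ and using the $K$-equivariance $\langle k\cdot o, k'\cdot b\rangle = \langle o, (k^{-1}k')\cdot b\rangle$, the function $P_\lambda(T)$ restricted to $K/M$-directions decomposes according to the Peter--Weyl decomposition of $L^2(B)$; the components pick out, for each $K$-type $\delta$ occurring in $C^\infty(B)$, the "generalized spherical function" $\Phi_{\lambda,\delta}$, and the coefficient with which it appears is (up to an explicit nonzero constant involving a $c$-function-type quantity) the corresponding Fourier coefficient of $T$. Here one uses the classical fact (Helgason, Kostant) that for $\lambda$ real these coefficients never vanish, so $P_\lambda(T) = 0$ forces all Fourier coefficients of $T$ to vanish, hence $T = 0$. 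This is where the hypothesis $\lambda\in\mathfrak{a}^*$ (real, rather than complex) is used in an essential way: for complex $\lambda$ the relevant Gamma-factors from the $e$-functions in \eqref{e function} can have poles or zeros, and injectivity can fail.

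The hard part is surjectivity: given $\phi\in\mathcal{E}_\lambda(X)$, produce a boundary value $T\in\mathcal{A}'(B)$ with $P_\lambda(T) = \phi$. The strategy is to run the above $K$-type analysis in reverse. Expand $\phi$ in its isotypic components under the left $K$-action; each component is governed by a second-order ODE in the radial variable (obtained by restricting $\Delta$ via the $KAK$ decomposition), whose space of solutions with the correct asymptotic behavior is one-dimensional, spanned by $\Phi_{\lambda,\delta}$. Thus the $\delta$-component of $\phi$ equals $c_\delta\,\Phi_{\lambda,\delta}$ for a scalar $c_\delta$, and one defines $T$ to be the analytic functional whose $\delta$-th Fourier coefficient is $c_\delta$ divided by the nonzero normalizing constant from the injectivity step. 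The genuinely delicate point — the main obstacle — is showing that the resulting formal series $\sum_\delta c_\delta(\cdots)$ actually defines a \emph{continuous} functional on $\mathcal{A}(B)$, i.e. that the $c_\delta$ do not grow too fast. This requires careful estimates on the generalized spherical functions $\Phi_{\lambda,\delta}$ as the $K$-type $\delta\to\infty$ (uniform in the radial variable on compacta), together with a priori bounds on the Fourier coefficients of the real-analytic-in-a-strip function $\phi$; this is precisely the estimate that forces the target to be $\mathcal{A}'(B)$ rather than $\mathcal{D}'(B)$, and it is the heart of Helgason's theorem. I would cite \cite{He94} for the full argument rather than reproduce it, since it is long and the paper only needs the statement.
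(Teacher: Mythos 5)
The paper offers no proof of this theorem at all: it is stated with the remark ``recall the following fundamental result (\cite{He94}, p.\ 507)'' and the citation to Helgason's book is the entire justification. Since your proposal also concludes by deferring to \cite{He94} for the full argument, you and the paper take the same approach. Your sketch of the underlying machinery (the Poisson kernel as a joint eigenfunction of invariant differential operators, injectivity via $K$-type decomposition and nonvanishing of the generalized spherical function coefficients for $\lambda\in\la^*$ real, surjectivity by radial ODE analysis in each $K$-isotypic component plus growth estimates on the coefficients forcing the target to be $\mathcal{A}'(B)$) faithfully reflects the structure of Helgason's rank-one proof, which is more detail than the paper itself supplies. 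Two small imprecisions worth noting, though neither is a real gap since you are not reproducing the argument: writing ``$z=kK$'' conflates a point of $X$ with the origin orbit (you presumably mean the polar decomposition $z = ka\cdot o$, with the $K$-type analysis carried out in the $k$-variable at fixed radius); and the eigenfunction $\phi\in\mathcal{E}_\lambda(X)$ is real-analytic by ellipticity, but it does not a priori extend to a uniform complex strip, so the relevant bound on the $K$-type coefficients $c_\delta$ comes from local analyticity on compacta together with the controlled asymptotics of $\Phi_{\lambda,\delta}$ in the $K$-type, exactly as you indicate.
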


For an eigenfunction $f\in \mathcal{E}_{\lambda}(X)$ of the
Laplacian we call the unique functional $T_{f}$ with
$f=P_{\lambda}(T_{f})$, given by Theorem \ref{Helgason boundary
values}, the \emph{boundary values} of $f$. We will now consider a
special class of these eigenfunctions that have distributional
boundary values: Let $d_X$ denote the distance function on $X$ and
define the space $\mathcal{E}^*(X)$ of smooth functions of
exponential growth by
\begin{eqnarray}\label{exponential growth}
\mathcal{E}^*(X) := \left\{    f\in\mathcal{E}(X)\mid \exists C>0:
|f(x)|\leq C e^{C d_X(o,x)} \,\,\, \forall x\in X  \right\}.
\end{eqnarray}
We put $\mathcal{E}_{\lambda}^*(X) := \mathcal{E}^*(X)\cap
\mathcal{E}_{\lambda}(X)$ and recall \eqref{e function}. Then (cf.
\cite{He94}, p. 508):

\begin{thm}\label{distribution}
Let $\lambda\in\mathfrak{a}^*_{\cc}$ be such that $e_w(\lambda)\neq
0$. Then $P_{\lambda}(\mathcal{D}'(B))=\mathcal{E}_{\lambda}^*(X)$.
\end{thm}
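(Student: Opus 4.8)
The plan is to prove the two inclusions $P_{\lambda}(\mathcal{D}'(B)) \subseteq \mathcal{E}_{\lambda}^*(X)$ and $\mathcal{E}_{\lambda}^*(X) \subseteq P_{\lambda}(\mathcal{D}'(B))$ separately, using the already-established bijectivity of $P_{\lambda}$ on analytic functionals (Theorem \ref{Helgason boundary values}) to reduce everything to a statement about which analytic functional $T$ associated to a given eigenfunction is in fact a distribution.

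\textbf{The easy inclusion.} First I would show that $P_{\lambda}(T) \in \mathcal{E}_{\lambda}^*(X)$ whenever $T \in \mathcal{D}'(B)$. Since $B = K/M$ is compact, a distribution $T$ on $B$ has some finite order $N$, so $|T(f)| \le C \sum_{|\alpha|\le N} \|D^{\alpha} f\|_{\infty}$ for $f \in \mathcal{A}(B)\subseteq C^{\infty}(B)$. Applying this to $f = f_z := b\mapsto e^{(i\lambda+\rho)\langle z,b\rangle}$, I need a bound on the $C^N$-norm (in $b$) of $f_z$ that grows at most exponentially in $d_X(o,z)$. Writing $z = k\exp H \cdot o$ with $\|H\| = d_X(o,z)$ and using the $K$-equivariance of the horocycle bracket (Proposition on invariance of $\langle\cdot,\cdot\rangle$ under diagonal $K$-action), one reduces to $z = \exp H\cdot o$; then $\Re\langle\exp H\cdot o, b\rangle$ and all its $b$-derivatives are controlled linearly in $\|H\|$ by standard estimates on the Iwasawa projection $H(g^{-1}k)$ (e.g. $\|H(g)\| \le d(o, g\cdot o)$ type bounds, cf.\ \cite{He00}). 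This yields $|P_{\lambda}(T)(z)| \le C' e^{C'' d_X(o,z)}$, hence $P_{\lambda}(T)\in \mathcal{E}^*_{\lambda}(X)$.

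\textbf{The hard inclusion.} The substance is $\mathcal{E}_{\lambda}^*(X) \subseteq P_{\lambda}(\mathcal{D}'(B))$: given $f\in\mathcal{E}^*_{\lambda}(X)$, its analytic-functional boundary value $T_f$ (which exists and is unique by Theorem \ref{Helgason boundary values}) is actually a distribution. This is exactly the content of Helgason's characterization of eigenfunctions of moderate growth, and the hypothesis $e_w(\lambda)\ne 0$ is precisely what is needed for it. The approach I would take is to recover $T_f$ from $f$ via an explicit boundary-value map: one integrates $f$ against the ``asymptotic'' spherical kernel, i.e.\ considers $\lim_{t\to\infty} e^{(-i\lambda+\rho)(tH_0)} \int_K f(g k a_t \cdot o)\, \varphi(k)\, dk$ type expressions, or equivalently one uses the operator $L_\lambda$-style integral over $\overline{N}$ (or $N$) that inverts the Poisson transform. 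The nonvanishing of $e_w(\lambda)$ guarantees that the relevant Harish-Chandra $c$-function (or the intertwining integral) does not vanish, so this inversion is valid and the resulting functional is given by integration of $f$'s boundary data against smooth test functions on $B$ with at most a finite number of derivatives — i.e.\ it is a distribution. One then checks $P_\lambda$ of this distribution equals $f$ by uniqueness in Theorem \ref{Helgason boundary values}.

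\textbf{Main obstacle.} The genuinely hard step is the hard inclusion: controlling the boundary asymptotics of an arbitrary eigenfunction of exponential growth and showing the limit defines a distribution of finite order. This is where the role of $e_w(\lambda)$ enters and where one must invoke (rather than reprove) the deep results of Helgason and of Oshima–Sekiguchi / Ban–Schlichtkrull on boundary values of eigenfunctions; concretely I would cite \cite{He94}, p.\ 508, for the precise statement $P_\lambda(\mathcal{D}'(B)) = \mathcal{E}^*_\lambda(X)$ and restrict my own argument to verifying the (easier) direct inclusion and to recording the consequences we actually need. Since the paper says ``we recall'' this theorem, the intent is surely to quote it, so the ``proof'' is really a pointer to the literature together with the short growth-estimate argument above, plus the observation that $e_w(\lambda)\ne 0$ is the rank-one specialization of the general nonvanishing condition on the relevant Gamma-factor product.
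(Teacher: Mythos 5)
Your proposal is correct and matches the paper's approach: the paper gives no proof of this theorem, only the citation ``(cf.\ \cite{He94}, p.\ 508),'' and you rightly identify that the intent is to quote Helgason's result rather than reprove it. Your sketch of the easy inclusion and your remark that $e_w(\lambda)\neq 0$ is the rank-one form of the nonvanishing Gamma-factor condition are accurate supplements, but they go beyond what the paper itself records.
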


$G$ acts on $B$, hence on $\mathcal{D}'(B)$ by push-forward: Given
$T\in\mathcal{D}'(B)$, a test function $f\in\mathcal{E}(B)$ and
$g\in G$, the action is $(gT)(f)=T(f\circ g^{-1})$. When we denote
the pairing between distributions and test functions by an integral,
we also write $T(d\gamma b)$ for $(\gamma T)(db)$. Consider a
$\Gamma$-invariant eigenfunction $\phi$ with boundary values
$T_{\phi}$: Then $\phi(\gamma z)=\phi(z)$ for all $\gamma$ and $z$
implies (recall $\left\langle g\cdot x,g\cdot b \right\rangle =
\left\langle x,b \right\rangle + \left\langle g\cdot o,g\cdot b
\right\rangle$ from equation \eqref{equivariance})
\begin{eqnarray*}
\phi(z)
&=& \int_B e^{(i\lambda+\rho)\langle \gamma z ,b\rangle} T_{\phi}(db)
 = \int_B e^{(i\lambda+\rho)\langle \gamma z,\gamma b\rangle} T_{\phi}(d\gamma b) \\
&=& \int_B e^{(i\lambda+\rho)\langle z,b\rangle} e^{(i\lambda+\rho)\langle\gamma o,\gamma b\rangle}
     T_{\phi}(d\gamma b).
\end{eqnarray*}
By uniqueness of the Poisson--Helgason transform (Theorem
\ref{Helgason boundary values}) we obtain
\begin{eqnarray}\label{boundary values equivariance 1}
T_{\phi}(d\gamma b) = e^{-(i\lambda+\rho)\langle\gamma o,\gamma b\rangle} T_{\phi}(db).
\end{eqnarray}

\subsection*{Spherical Principal Series}

We recall some facts concerning the \emph{principal
series}\index{principal series} representations of $G$. Following
\cite{He94} and \cite{Wil}, let $\lambda\in\la$ and consider the
representation $\sigma_{\lambda}(man)=e^{(i\lambda+\rho)\log(a)}$ of
$P=MAN$ on $\cc$. We denote the \emph{induced
representation}\label{induced representation} on $G$ by
$\pi_{\lambda}=\Ind_P^G(\sigma_{\lambda})$. The \emph{induced
picture}\label{induced picture} of this representation is
constructed as follows: A dense subspace of the representation space
is
\begin{eqnarray*}
H_{\lambda}^{\infty} := \left\{ f\in C^{\infty}(G): f(gman)=e^{-(i\lambda+\rho)\log(a)}f(g)\right\}
\end{eqnarray*}
with inner product
\begin{eqnarray*}
(f_1,f_2) = \int_{K/M}f_1(k)\overline{f_2(k)}\,dk = \langle {f_1}_{|K},{f_2}_{|K}\rangle_{L^2(K/M)}
\end{eqnarray*}
and corresponding norm $\|f\|^2 = \int_{K/M}|f(k)|^2\,dk$. The group
action of $G$ is given by $(\pi_{\lambda}(g)f)(x)=f(g^{-1}x)$. The
actual Hilbert space, which we denote by $H_{\lambda}$, and the
representation on $H_{\lambda}$, which we also denote by
$\pi_{\lambda}$, is obtained by completion (cf. \cite{Wil}, Ch. 9).
The representations $\pi_{\lambda}$ ($\lambda\in\la$) form the
\emph{spherical principal series} of $G$.
$(\pi_{\lambda},H_{\lambda})$ is a unitary (\cite{He94}, p. 528) and
irreducible (loc. cit. p. 530) Hilbert space representation.

Given $f\in C^{\infty}(K/M)$ we may extend it to a function on $G$
by $\tilde{f}(g)=e^{-(i\lambda+\rho)H(g)}f(k(g))$. A direct
computation shows that $\tilde{f}\in H^{\infty}_{\lambda}$. On the
other hand, if $f\in H^{\infty}_{\lambda}$, then the restriction
$f_{|K}$ of $f$ to $K$ is an element of $C^{\infty}(K/M)$. Moreover,
if $f\in C^{\infty}(K/M)$ and if $\tilde{f}$ is as above, then
$\tilde{f}_{|K}=f$. The mapping $f\mapsto\tilde{f}$ described above
is isometric with respect to the $L^2(K/M)$-norm. We may hence
identify $C^{\infty}(K/M)\cong H_{\lambda}^{\infty}$. The advantage
is that the representation space is independent of $\lambda$. The
group action on $C^{\infty}(K/M)$ is realized by
\begin{eqnarray}\label{compact model action}
(\pi_{\lambda}(g)f)(kM) = f(k(g^{-1}k)M) e^{-(i\lambda+\rho)H(g^{-1}k)}.
\end{eqnarray}
This is called the \emph{compact picture} of the (spherical)
principal series. Notice that for $g\in K$ the group action
\eqref{compact model action} simplifies to the left-regular
representation of the compact group $K$ on $K/M$.

Let $\lambda\in\la$. It follows from
\begin{eqnarray}\label{Poisson follows}
(\pi_{\lambda}(g)1)(k)=e^{-(i\lambda+\rho)H(g^{-1}k)}=e^{(i\lambda+\rho)\langle gK,kM\rangle}
\end{eqnarray}
that the Poisson transform $P_{\lambda}(T): G/K\rightarrow \cc$ of
$T\in\mathcal{D}'(B)$ is given by
\begin{eqnarray}\label{Poisson intertwines}
P_{\lambda}(T)(gK) = T(\pi_{\lambda}(g)\cdot 1).
\end{eqnarray}
Let $\phi$ denote a $\Gamma$-invariant eigenfunction of the Laplace
operator with boundary values $T_{\phi}\in\mathcal{D}'(B)$ such that
$\phi=P_{\lambda}(T_{\phi})$. Let $\widetilde{\pi}_{\lambda}$ denote
the dual representation on $\mathcal{D}'(B)$ corresponding to
$\pi_{\lambda}$. Since $\phi$ is invariant, it follows from
\eqref{Poisson intertwines} and the uniqueness of the boundary
values that $T_{\phi}$ is invariant under the actions
$\widetilde{\pi}_{\lambda}(\gamma)$, $\gamma\in\Gamma$.

\subsection*{Regularity of Distribution Boundary Values}

In this subsection we prove a regularity statement for distribution
boundary values corresponding to Laplace eigenfunctions with
eigenvalue parameter $\lambda\in\la^*$ on a compact quotient
$X_{\Gamma}$. These estimates may not be the sharpest possible, but
they are sufficient for our purposes.

Let $T_{\phi}\in \mathcal{D}'(K/M)$ be the (unique) preimage (under
the Poisson transform) of a normalized
$L^2(X_{\Gamma})$-eigenfunction $\phi$ (with exponential growth).
Under the identification $H_{\lambda}^{\infty}\cong C^{\infty}(K/M)$
we view $T_{\phi}$ as a functional on $H_{\lambda}^{\infty}$: For
$f\in H_{\lambda}^{\infty}$ let $T_{\phi}(f)$ be defined by
$T_{\phi}(f_{|K})$. Then $T_{\phi}$ is a continuous linear
functional on $H_{\lambda}^{\infty}$, invariant under
$\widetilde{\pi}_{\lambda}(\gamma)$. As proven in \cite{CG}, Theorem
A.1.4, if $f$ is a smooth vector for the principal series
representation, then $f\in H_{\lambda}^{\infty}$ is a smooth
function on $G$. We consider the mapping
\begin{eqnarray*}
\Phi_{\phi}: H_{\lambda}^{\infty} \rightarrow
C^{\infty}(\Gamma\backslash G), \,\,\,\,\,\,\,\, \Phi_{\phi}(f)(\Gamma g)
= T_{\phi}(\pi_{\lambda}(g)f).
\end{eqnarray*}

\begin{lem}
$\Phi_{\phi}$ is an isometry w.r.t. the norms of $L^2(K/M)$ and $L^2(\Gamma\backslash G)$.
\end{lem}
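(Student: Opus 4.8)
The plan is to compute the $L^2(\Gamma\backslash G)$-norm of $\Phi_\phi(f)$ directly by unfolding and to recognize the resulting integral as an $L^2(K/M)$-inner product. First I would recall that by construction $\Phi_\phi(f)(\Gamma g) = T_\phi(\pi_\lambda(g)f)$, and that the invariance of $T_\phi$ under $\widetilde\pi_\lambda(\gamma)$ for $\gamma\in\Gamma$ is exactly what makes $\Phi_\phi(f)$ a well-defined function on $\Gamma\backslash G$. Next, observe that since $\phi = P_\lambda(T_\phi)$ and $P_\lambda(T)(gK) = T(\pi_\lambda(g)\cdot 1)$ by \eqref{Poisson intertwines}, the map $\Phi_\phi$ sends the constant function $1\in H_\lambda^\infty$ to (the lift to $\Gamma\backslash G$ of) $\phi$ itself. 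So the statement is a "polarized" version of the fact that $\|\phi\|_{L^2(X_\Gamma)} = 1$: one should think of $\Phi_\phi$ as the matrix coefficient map $f\mapsto \langle \pi_\lambda(\cdot)f, T_\phi\rangle$, and the isometry property should follow from an abstract Plancherel-type identity for the occurrence of $\pi_\lambda$ in $L^2(\Gamma\backslash G)$, normalized precisely so that the distinguished vector $1$ maps to the normalized $\phi$.

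The concrete route I would take: write $\|\Phi_\phi(f)\|^2_{L^2(\Gamma\backslash G)} = \int_{\Gamma\backslash G} |T_\phi(\pi_\lambda(g)f)|^2 \, d\nu(\Gamma g)$ and expand $T_\phi(\pi_\lambda(g)f)$ using the compact-picture formula \eqref{compact model action}, namely $T_\phi(\pi_\lambda(g)f) = \int_{K/M} f(k(g^{-1}k)M)\, e^{-(i\lambda+\rho)H(g^{-1}k)}\, T_\phi(dk)$ (interpreted distributionally). Then $|T_\phi(\pi_\lambda(g)f)|^2$ becomes a double integral over $(b,b')\in B\times B$ against $T_\phi(db)\,\overline{T_\phi(db')}$ of a kernel involving $f$ and the cocycle $e^{-(i\lambda+\rho)H(g^{-1}k)}$, and one integrates $g$ over $\Gamma\backslash G$. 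The key computation is to swap the $g$-integral inside, use the unfolding $\int_G = \int_{\Gamma\backslash G}\sum_\gamma$ together with the equivariance \eqref{boundary values equivariance 1} of $T_\phi$ and the cocycle identity in Lemma \ref{facts}(ii) to reassemble a full integral over $G$, and then apply Fubini to recognize that integrating the product of two Poisson kernels over $G/K$ (or the relevant slice) collapses the double $B$-integral onto the diagonal, leaving $\int_{K/M} |f(k)|^2\, dk = \|f\|^2_{L^2(K/M)}$. The orthogonality/collapsing step is where one uses that $(\pi_\lambda, H_\lambda)$ is irreducible and unitary (quoted from \cite{He94}) so that, up to the already-fixed normalization coming from $\|\phi\|_{L^2(X_\Gamma)}=1$, the matrix-coefficient-against-$T_\phi$ pairing is isometric.

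The main obstacle I anticipate is making the interchange of integrations and the "diagonal collapse" rigorous when $T_\phi$ is only a distribution, not a measure: one cannot naively form $T_\phi(db)\,\overline{T_\phi(db')}$ as a product distribution and restrict it to the diagonal. The clean way around this is to avoid the pointwise manipulation entirely and instead argue by continuity and density — prove the identity first for $f$ in a dense subspace where $\pi_\lambda(g)f$ depends smoothly enough on $g$ that all Fubini steps are legitimate (e.g.\ $K$-finite vectors, using the expansion of $T_\phi$ in $K$-types), and then extend to all of $H_\lambda^\infty$ by continuity of both sides. Alternatively, and perhaps more transparently, one invokes the general theory of automorphic realizations: $\Phi_\phi$ is (a scalar multiple of) the canonical intertwiner from $H_\lambda$ into $L^2(\Gamma\backslash G)$ whose image is the $\pi_\lambda$-isotypic line spanned by $\phi$, and the scalar is pinned down to $1$ by evaluating on $f=1$ and using $\|\phi\|_{L^2(X_\Gamma)}=1$. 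I would present the density argument as the rigorous backbone and mention the representation-theoretic interpretation as motivation.
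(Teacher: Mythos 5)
Your ``alternative'' route in the final paragraph is in fact the paper's actual proof, and your ``concrete route'' in the middle paragraph is the one that does not work; the emphasis should be reversed. The paper's argument is: pull the $L^2(\Gamma\backslash G)$-inner product back along $\Phi_\phi$ to a sesquilinear form $\langle\cdot,\cdot\rangle_2$ on the $(\g,K)$-module $H^\infty_{\lambda,K}$, use $K$-finiteness to represent $f_2\mapsto\langle f_1,f_2\rangle_2$ by a vector $Af_1$, use equivariance of $\Phi_\phi$ and unitarity of $\pi_\lambda$ to see that $A$ is $(\g,K)$-equivariant, and conclude by Schur's lemma for the irreducible admissible module that $A$ is a scalar; the scalar is $1$ because $\Phi_\phi(1)=P_\lambda(T_\phi)=\phi$ has $L^2(X_\Gamma)$-norm $1$. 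You do name all of these ingredients (irreducibility, canonical intertwiner, pinning the scalar at $f=1$), so you have the right idea — it is just mislabeled as ``motivation.''

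The direct-computation route has a genuine gap beyond the distributional-product issue you flag. Unfolding is the wrong tool here: $g\mapsto T_\phi(\pi_\lambda(g)f)$ is $\Gamma$-invariant, so it is not the $\Gamma$-periodization of any $L^1(G)$ function, and you cannot replace $\int_{\Gamma\backslash G}$ by $\int_G$ (the latter would be $\infty$ times the quantity you want, since $\Gamma\backslash G$ is compact). There is no way to ``reassemble a full integral over $G$'' from the automorphic data, so the subsequent step of integrating a product of two Poisson kernels over $G$ and collapsing the double $B$-integral to the diagonal never gets off the ground. Moreover, even if one could, the Poisson-kernel orthogonality relations carry $|c(\lambda)|^{-2}$ normalization factors and would not yield the unnormalized $\int_{K/M}|f|^2\,dk$ on the nose. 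So you should drop the computational plan entirely and promote the Schur/density argument to the main proof, which is exactly what the paper does.
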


\begin{proof}
The operator $\Phi_{\phi}$ is equivariant with respect to the
actions $\pi_{\lambda}$ on $H^{\infty}_{\lambda}$ and the right
regular representation of $G$ on $L^2(\Gamma\backslash G)$. We
pull-back the $L^2(\Gamma\backslash G)$ inner product onto the
$(\g,K)$-module $H^{\infty}_{\lambda,K}$ of $K$-finite and smooth
vectors (which is dense in $H^{\infty}_{\lambda}$, \cite{Wal2}, p.
81):
\begin{eqnarray*}
\langle f_1,f_2\rangle_{2} := \langle \Phi_{\phi}(f_1),\Phi_{\phi}(f_2)\rangle_{L^2(\Gamma\backslash G)}.
\end{eqnarray*}
Let $f_1\in H^{\infty}_{\lambda,K}$. Then $A_{f_1}:
H^{\infty}_{\lambda,K} \rightarrow \cc, \, f_2 \mapsto \langle
f_1,f_2\rangle_{2}$ is a conjugate-linear, $K$-finite functional on
the $(\g,K)$-module $H^{\infty}_{\lambda,K}$. This module is
irreducible and admissible, since $H_{\lambda}$ is unitary and
irreducible (\cite{Wal2}, theorems 3.4.10 and 3.4.11). As $A_{f_1}$
is $K$-finite it is nonzero on at most finitely many $K$-isotypic
components. It follows that there is a linear map
$A:H^{\infty}_{\lambda,K}\rightarrow H^{\infty}_{\lambda,K}$ such
that for each $f_1\in H^{\infty}_{\lambda,K}$ the functional
$A_{f_1}$ equals $f_2\mapsto \langle A f_1,f_2\rangle_{L^2(K/M)}$.
The equivariance of $\Phi_{\phi}$ and the unitarity of
$\pi_{\lambda}$ imply that $A$ is $(\g,K)$-equivariant. Using
Schur's lemma for irreducible $(\g,K)$-modules (\cite{Wal2}, p. 80),
we deduce that $A$ is a constant multiple of the identity and hence
$\langle \cdot,\cdot\rangle_{2}$ is a constant multiple of the
original $L^2(K/M)$-inner product on $H_{\lambda,K}^{\infty}$. This
constant is $1$: First, $\Phi_{\phi}(1) = P_{\lambda}(T_{\phi}) =
\phi$ is the $K$-invariant lift of $\phi$ to $L^2(\Gamma\backslash
G)$. Then $\|\Phi_{\phi}(1)\|_{L^2(\Gamma\backslash
G)}=1=\|1\|_{L^2(K/M)}$.
\end{proof}

Let $(y_j)$ and $(x_j)$ be bases for $\lk$ and $\lp$, respectively,
such that $\langle y_j,y_i\rangle = -\delta_{ij}$, $\langle
x_j,x_i\rangle = \delta_{ij}$, where $\langle\,,\,\rangle$ denotes
the Killing form. The Casimir operator of $\lk$ is
$\Omega_{\lk}=\sum_i y_i^2$ and the Casimir operator of $\g$ is
\begin{eqnarray*}
\Omega_{\g} = -\sum_{j}x_j^2 + \Omega_{\lk} \in \mathcal{Z}(\g),
\end{eqnarray*}
where $\mathcal{Z}(\g)$ is the center of the universal enveloping algebra $\mathcal{U}(\g)$ of $\g$.

It follows from $T_{\phi}(f) = \Phi_{\phi}(f)(\Gamma e)$ that
\begin{eqnarray}\label{first estimate}
|T_{\phi}(f)|\leq \|\Phi_{\phi}(f)\|_{\infty}.
\end{eqnarray}
We may now estimate this by a convenient Sobolev norm on
$L^2(\Gamma\backslash G)$. Let $\widetilde{\Delta}$ denote the
Laplace operator of $\Gamma\backslash G$. Then we have
\begin{eqnarray*}
\widetilde{\Delta} = - \Omega_{\mathfrak{g}} + 2\Omega_{\mathfrak{k}},
\end{eqnarray*}
where $\Omega_{\mathfrak{g}}$ and $\Omega_{\mathfrak{k}}$ are the
Casimir operators on $G$ and $K$, respectively.

\begin{defn}
Let $s\in \rr$. The Sobolev space $W^{2,s}(\Gamma\backslash G)$ is
(cf. \cite{Tay81}, p. 22) the space of functions $f$ on
$\Gamma\backslash G$ satisfying $(1+\widetilde{\Delta})^{s/2}(f)\in
L^2(\Gamma\backslash G)$ with norm
\begin{eqnarray*}
\| f \|_{W^{2,s}(\Gamma\backslash G)} = \| (1+\widetilde{\Delta})^{s/2}(f) \|_{L^2(\Gamma\backslash G)}.
\end{eqnarray*}
\end{defn}

Let $m=\dim(\Gamma\backslash G)=\dim(G)$, and let $s>m/2$. The
Sobolev imbedding theorem for the compact space $\Gamma\backslash G$
(\cite{Tay81}, p. 19) states that the identity
$W^{2,s}(\Gamma\backslash G) \hookrightarrow C^0(\Gamma\backslash
G)$ is a continuous inclusion ($C^0(\Gamma\backslash G)$ is equipped
with the usual sup-norm $\|\cdot\|_{\infty}$). It follows that there
exists a $C>0$ such that
\begin{eqnarray}\label{this is 0}
\| \Phi_{\phi}(f)\|_{\infty} \leq C \| \Phi_{\phi}(f)\|_{W^{2,s}(\Gamma\backslash G)} \,\,\,\,\,\,\,\,
\forall f\in C^{\infty}(K/M)  .
\end{eqnarray}

Now we derive the announced regularity estimate for the boundary
values: First, by increasing the Sobolev order, we may assume
$s/2\in\nn$, so
\begin{eqnarray*}
(1+\widetilde{\Delta})^{s/2} =
(1-\Omega_{\mathfrak{g}}+2\Omega_{\mathfrak{k}})^{s/2} \in
\mathcal{U}(\mathfrak{g}).
\end{eqnarray*}
Hence $(1+\widetilde{\Delta})^{s/2}$ commutes with each
$G$-equivariant mapping. Let $f\in H_{\lambda}^{\infty}$. Then
\begin{eqnarray}\label{this is 1}
\left\|\Phi_{\phi}(f)\right\|_{W^{2,s}(\Gamma\backslash G)}
&=& \left\|(1+\widetilde{\Delta})^{s/2}\Phi_{\phi}(f)\right\|_{L^2(\Gamma\backslash G)} \nonumber \\
&=& \left\|\Phi_{\phi}((1-\Omega_{\g}+2\Omega_{\lk})^{s/2}(f))\right\|_{L^2(\Gamma\backslash G)} \nonumber \\
&=& \left\| (1-\Omega_{\g}+2\Omega_{\lk})^{s/2}(f) \right\|_{L^2(K/M)}.
\end{eqnarray}
Recall $\pi_{\lambda}(\Omega_{\mathfrak{k}})=\Delta_{K/M}$ and
$\Omega_{\g}\in \mathcal{Z}(\g)$. Then \eqref{this is 1} equals
\begin{eqnarray}\label{this is 2}
&&\left\| \sum_{k=0}^{s/2} \binom{s/2}{k} (1+2\Delta_{K/M})^{k}
  (-\Omega_{\g})^{s/2-k}(f)\right\|_{L^2(K/M)} \nonumber \\
&& \hspace{3mm} \leq \, \sum_{k=0}^{s/2} \binom{s/2}{k} \left\|
   (1+2\Delta_{K/M})^{k} (-\Omega_{\g})^{s/2-k}(f)\right\|_{L^2(K/M)}.
\end{eqnarray}
Assume $f\in H_{\lambda.K}^{\infty}$ and recall that $\Omega_{\g}$
acts on the irreducible $\mathcal{U}(\g)$-module $H_{\lambda,K}^{\infty}$ by
multiplication with the scalar $-(\langle\lambda,\lambda\rangle +
\langle\rho,\rho\rangle)$ (cf. \cite{Wil}, p. 163), that is
\begin{eqnarray*}
{\Omega_{\g}}_{|H_{\lambda,K}^{\infty}} = -\left(\langle\lambda,\lambda\rangle
+ \langle\rho,\rho\rangle\right) \id_{H_{\lambda,K}^{\infty}}.
\end{eqnarray*}
Then \eqref{this is 2} equals
\begin{eqnarray}\label{this is 3}
\sum_{k=0}^{s/2} \binom{s/2}{k} \left\| (1+2\Delta_{K/M})^k
(|\lambda|^2+|\rho|^2)^{s/2-k}(f) \right\|_{L^2(K/M)}.
\end{eqnarray}
But $\left(|\lambda|^2+|\rho|^2\right)^{-k}\leq 1 + |\rho|^{-s} =:
C'$ ($0\leq k\leq s/2$), so the term in \eqref{this is 3} is bounded
by
\begin{eqnarray}\label{this is 4}
C'\left(|\lambda|^2+|\rho|^2\right)^{s/2} \sum_{k=0}^{s/2}
\binom{s/2}{k} \left\| (1+2\Delta_{K/M})^{k}(f) \right\|_{L^2(K/M)}.
\end{eqnarray}
Since $H_{\lambda.K}^{\infty}$ is dense in $H_{\lambda}^{\infty}$,
this bound holds for all $f\in H_{\lambda}^{\infty}$. Using
\eqref{first estimate}-\eqref{this is 4} we get
\begin{eqnarray}\label{estimate}
|T_{\phi}(f)| \leq C'\left(|\lambda|^2+|\rho|^2\right)^{s/2}
\sum_{k=0}^{s/2} \binom{s/2}{k} \left\| (1+2\Delta_{K/M})^{k}(f)
\right\|_{L^2(K/M)}.
\end{eqnarray}
for all $f\in H_{\lambda}^{\infty}$ and hence for all $f\in
C^{\infty}(K/M)$. We estimate \eqref{estimate} by a continuous
$C^{\infty}(K/M)$-seminorm $\|\cdot\|'$ (independent of $\phi$) and
obtain:

\begin{prop}\label{Proposition Olbrich}
Let $2s>\dim(G)$ such that $s/2\in\nn$. There exists a continuous
$C^{\infty}(B)$-seminorm $\|\cdot\|'$, such that
\begin{eqnarray}
|T_{\phi}(f)|\leq (1+|\lambda|)^s \|f\|' \,\,\,\, \forall \, f\in C^{\infty}(K/M)
\end{eqnarray}
for the distribution boundary values $T_{\phi}$ corresponding to a real-valued and
$L^2(X_{\Gamma})$-normalized eigenfunction $\phi$ of
$\Delta_{\Gamma}$ with eigenvalue $-(|\lambda|^2+|\rho|^2)$.
\end{prop}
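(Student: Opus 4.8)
The plan is to package the chain of inequalities \eqref{first estimate}--\eqref{estimate}, which has already been established above, into the asserted form; essentially all the substantial work is done and what remains is elementary bookkeeping. The nontrivial input — that $\Phi_{\phi}$ is an $L^{2}$-isometry, that $(1+\widetilde{\Delta})^{s/2}\in\mathcal{U}(\g)$ commutes with every $G$-equivariant map, that $\Omega_{\g}$ acts on the $K$-finite module $H^{\infty}_{\lambda,K}$ by the scalar $-(|\lambda|^{2}+|\rho|^{2})$, that $\pi_{\lambda}(\Omega_{\lk})=\Delta_{K/M}$, and the Sobolev embedding on the compact manifold $\Gamma\backslash G$ (this is where the hypotheses $2s>\dim G$ and $s/2\in\nn$ are used) — has already produced \eqref{estimate}.

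First I would rewrite the explicit $\lambda$-factor. From $(|\lambda|^{2}+|\rho|^{2})^{1/2}\le|\lambda|+|\rho|\le\max(1,|\rho|)\,(1+|\lambda|)$ one gets $(|\lambda|^{2}+|\rho|^{2})^{s/2}\le\max(1,|\rho|)^{s}\,(1+|\lambda|)^{s}$. Substituting this into \eqref{estimate} and collecting all the $\lambda$-independent constants — the $C'$ appearing before \eqref{this is 4}, the factor $\max(1,|\rho|)^{s}$, and the binomial coefficients $\binom{s/2}{k}$ — into a single constant $C>0$, we obtain
\[
|T_{\phi}(f)| \;\le\; (1+|\lambda|)^{s}\, C\sum_{k=0}^{s/2}\bigl\|(1+2\Delta_{K/M})^{k}f\bigr\|_{L^{2}(K/M)} \qquad\text{for all } f\in H^{\infty}_{\lambda}.
\]

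Next I would verify that the right-hand sum is a continuous seminorm. Define $\|f\|':=C\sum_{k=0}^{s/2}\|(1+2\Delta_{K/M})^{k}f\|_{L^{2}(K/M)}$ for $f\in C^{\infty}(B)=C^{\infty}(K/M)$. Each $(1+2\Delta_{K/M})^{k}$ is a linear differential operator of order $2k$ on the compact manifold $B$, and $g\mapsto\|g\|_{L^{2}(K/M)}$ is a continuous seminorm on $C^{\infty}(B)$; hence each summand, and therefore the finite sum $\|\cdot\|'$, is a continuous $C^{\infty}(B)$-seminorm, manifestly independent of $\lambda$ and of $\phi$. Using the identification $C^{\infty}(K/M)\cong H^{\infty}_{\lambda}$, $f\mapsto\tilde f$, together with the definition $T_{\phi}(f)=T_{\phi}(f_{|K})$, the displayed estimate yields $|T_{\phi}(f)|\le(1+|\lambda|)^{s}\|f\|'$ for every $f\in C^{\infty}(B)$, which is the claim.

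I do not expect a real obstacle here, since the proposition is a clean repackaging of \eqref{estimate}. The one point that deserves care is the uniformity in $\lambda$: the seminorm $\|\cdot\|'$ must not secretly depend on the eigenvalue, and this is ensured precisely because the scalar $-(|\lambda|^{2}+|\rho|^{2})$ is extracted while working on the $K$-finite module $H^{\infty}_{\lambda,K}$ (where $\Omega_{\g}$ acts as a scalar) and the resulting $\lambda$-free estimate is only afterwards extended to all of $H^{\infty}_{\lambda}$ by density, exactly as in the passage from \eqref{this is 3} to \eqref{estimate}.
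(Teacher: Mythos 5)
Your proposal is correct and follows the paper's own argument: the paper itself only states, immediately after deriving \eqref{estimate}, that one "estimates \eqref{estimate} by a continuous $C^{\infty}(K/M)$-seminorm $\|\cdot\|'$ (independent of $\phi$)", and your bookkeeping — absorbing $C'$, $\max(1,|\rho|)^s$ and the binomial coefficients into the seminorm, checking that $f\mapsto\|(1+2\Delta_{K/M})^k f\|_{L^2(K/M)}$ is continuous on $C^\infty(B)$, and using the identification $C^\infty(K/M)\cong H^\infty_\lambda$ to pass from $H^\infty_\lambda$ to $C^\infty(B)$ — is exactly the intended content. The closing remark about $\lambda$-uniformity (that the scalar is extracted on $H^\infty_{\lambda,K}$ and then extended by density) correctly identifies the one place that deserves care.
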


Each $f\in C^{\infty}(B)\otimes C^{\infty}(B)$ has the form $f =
\sum_{i,j}c_{i,j}f_i\otimes f_j$. We define a cross-norm
$\|\cdot\|''$ on $C^{\infty}(B)\otimes C^{\infty}(B)$ by
\begin{eqnarray*}
\|f\|'' = \inf\left\{ \sum_{i,j}|c_{i,j}|\|f_i\|'\|f_j\|' : f = \sum_{i,j}c_{i,j}f_i\otimes f_j \right\}.
\end{eqnarray*}
This norm induces a continuous seminorm on the projective tensor
product $C^{\infty}(B)\widehat{\otimes}_{\pi} C^{\infty}(B)$ (cf.
\cite{T}, p. 435). Let $\psi$ denote another normalized
eigenfunction with distribution boundary values
$T_{\psi}\in\mathcal{D}'(B)$ and eigenvalue parameter $\mu\in\la^*$.
Given $f = \sum_{i,j}c_{i,j}f_i\otimes f_j \in C^{\infty}(B)\otimes
C^{\infty}(B)$ we obtain
\begin{eqnarray}\label{take inf}
|(T_{\phi}\otimes T_{\psi})(f)|
&\leq& \sum_{i,j}|c_{i,j}| \cdot |T_{\phi}(f_i)| \cdot |T_{\psi}(f_j)| \nonumber \\
&\leq& (1+|\lambda|)^{s}(1+|\mu|)^{s} \sum_{i,j}|c_{i,j}|\cdot\|f_i\|'\cdot\|f_j\|',
\end{eqnarray}
which implies (by taking the infimum)
\begin{eqnarray}\label{BtimesB}
|(T_{\phi}\otimes T_{\psi})(f)| \leq (1+|\lambda|)^{s}(1+|\mu)^{s} \|f\|''
\end{eqnarray}
for all $f\in C^{\infty}(B)\otimes C^{\infty}(B)$. But
$C^{\infty}(B\times B) \cong C^{\infty}(B) \widehat{\otimes}_{\pi}
C^{\infty}(B)$ (cf. \cite{T}, p. 530) implies that \eqref{BtimesB}
holds for all $f\in C^{\infty}(B\times B)$.

\section{Non-Euclidean Pseudodifferential Operators}\label{section PsiDO}

We use a special $G$-equivariant $\Psi DO$-calculus that generalizes
the \emph{Zelditch quantization} from (\cite{Z84}). In this section
we state some basic definitions and results we need. Full details
will appear in \cite{S}. For the moment, we may drop the rank one
assumption. Fix a co-compact and torsion free discrete subgroup
$\Gamma$ of $G$. Using the identification $X\times B=G/M$ we
identify functions $a(z,\lambda,b)=a(gK,\lambda,g\cdot M)$ on
$X\times\mathfrak{a}^*\times B$ with functions $a(gM,\lambda)$ on
$G/M\times\mathfrak{a}^*$. Let $n=\dim G$ and
$\left\{X_1,...,X_n\right\}$ be a basis for $\mathfrak{g}$ (the
elements are acting on functions on $G/M$ as left-invariant
differential operators). A $\Psi DO$ of order $0$ is a properly
supported operator $A:C_c^{\infty}(X)\rightarrow C_c^{\infty}(X)$
defined by
\begin{eqnarray}
Au(z) = \int_{\mathfrak{a}_+^*}\int_B e^{(i\lambda+\rho)\langle z,b\rangle}
a(z,\lambda,b)\tilde{u}(\lambda,b) \, db \, \dbar\lambda,
\end{eqnarray}
where:
\begin{itemize}
\item[(i)] $\tilde{u}(\lambda,b) = \int_{X}u(x)e^{(-i \lambda+\rho)
\left\langle x,b \right\rangle}dx$ is Helgason's non-euclidean
Fourier transform of $u$ (\cite{He94}, p. 223).
\item[(ii)] $\dbar\lambda=\frac{1}{|W|}|c(\lambda)|^{-2}d\lambda$,
where $|W|$ is the order of the Weyl group.
\end{itemize}
We call $a(z,\lambda,b)$ the \emph{complete symbol} of $A$, which is equivalently given by
\begin{eqnarray}\label{complete symbol}
\left(Ae_{\lambda,b}\right)(z)=a(z,\lambda,b)e_{\lambda,b}(z),
\end{eqnarray}
where for $\lambda\in\la^*$ and $b\in B$ the functions
$e_{\lambda,b}: X\rightarrow\cc, \,\, z\mapsto
e^{(i\lambda+\rho)\langle z,b\rangle}$ are called
\emph{non-Euclidean plane waves}.

Let now $X$ have rank one and denote by $|\cdot|$ the norm on
$\mathfrak{a}^*$ induced by the Killing form. We identify
$\la=\rr=\la^*$: Define $\lambda_0\in\la^*_+$ by
$\lambda_0(X)=\langle X,H_0\rangle$ ($X\in\la$). We always assume
that $a(z,\lambda,b)$ is a classical symbol of order $0$, i.e. it
has an asymptotic expansion of homogeneous symbols of decreasing
order:
\begin{eqnarray}
a(z,\lambda,b) \sim \sum_{j=0}^{\infty} \lambda^{-j} a_{-j}(z,b).
\end{eqnarray}
Asymptotics here means that $a(z,b,\lambda) - \sum_{j=0}^{R}
a_j(z,b) \lambda^{-j+m} \in S^{m-R-1}$, where $a\in
C^{\infty}(X\times\mathfrak{a}^*\times
B)=C^{\infty}(G/M\times\mathfrak{a}^*)$ is a \emph{symbol of order}
$m\in\rr$ ($a\in S^m$) if for all $\beta\in\nn_0$,
$\alpha\in\nn_0^n$ and for each compact subset $C\subset G/M$ it
satisfies
\begin{eqnarray}\label{symbol estimates}\index{symbol estimates}
\|\partial_{\lambda}^{\beta} \, X_1^{\alpha_1}\cdots X_n^{\alpha_n} \,
a(gM,\lambda)\| \leq C_{\beta}(C)(1+|\lambda|)^{m-\beta}.
\end{eqnarray}
We call $\sigma_A := a_0$ the \emph{principal symbol} of $\Op(a)=A$.
Theorems \ref{Intertwining Formula diagonal}, \ref{Intertwining
Formula}, \ref{Asymptotic} only concern principal symbols, so we
often assume that $a$ is independent of $\lambda$.

By $S^m_{\Gamma}$ we denote symbols of order $m$ which are invariant
under the diagonal action of $\Gamma$ on $X\times B$:
\begin{eqnarray}
a(\gamma\cdot z,\lambda,\gamma\cdot b) = a(z,\lambda,b), \,\,\,\,\, \gamma\in\Gamma.
\end{eqnarray}
Let $L^m_{\Gamma}$ be the space of operators associated with such
symbols. If $(T_gu)(z)=u(g\cdot z)$ denotes the translation of
functions on $X$ we find (see \cite{S} for details):
\begin{prop}
Let $a\in S^0$. Then $\mathrm{Op}(a): L^2(X)\rightarrow L^2(X)$ is
continuous. Moreover, $A\in L^m_{\Gamma}$ if and only if $A$
commutes with each $T_{\gamma}$, $\gamma\in\Gamma$.
\end{prop}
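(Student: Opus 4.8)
The plan is to establish the two assertions separately. For the continuity of $\Op(a)\colon L^2(X)\to L^2(X)$ when $a\in S^0$, I would first reduce to the case where $a$ is independent of $\lambda$ and even independent of $z$, i.e. to Fourier multipliers, by a standard argument: write $a(z,\lambda,b)$ as an oscillatory integral and peel off the $z$- and $\lambda$-dependence using the symbol estimates \eqref{symbol estimates}. More precisely, using \eqref{complete symbol} one sees that $\Op(a)$ acts on the non-Euclidean plane waves $e_{\lambda,b}$ by multiplication by $a(z,\lambda,b)$, so via Helgason's Fourier inversion formula (\cite{He94}) one has a representation of $\Op(a)u$ as a superposition of plane waves with amplitude $a(z,\lambda,b)\tilde u(\lambda,b)$. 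Here the Plancherel theorem for the non-Euclidean Fourier transform, which is an isometry $L^2(X)\to L^2(\la^*_+\times B,\dbar\lambda\,db)$, is the key tool. The bound $\|a\|_\infty<\infty$ for a zero-order symbol then immediately controls the multiplier acting on the frequency side; the $z$-dependence is handled by a $TT^*$/Cotlar–Stein type argument or, more simply in the classical-symbol setting, by a Calderón–Vaillancourt style estimate adapted to the homogeneous space, using finitely many of the derivative bounds in \eqref{symbol estimates}. I expect the cleanest route is to cite the boundedness on $L^2$ of the Zelditch calculus in rank one from \cite{Z84} and its generalization from \cite{S}, since the paper explicitly defers full details there; I would present the argument at the level of "this follows from Plancherel for Helgason's transform together with the zero-order symbol estimates, exactly as in the Euclidean case."

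For the equivalence $A\in L^m_\Gamma$ if and only if $A$ commutes with all $T_\gamma$, $\gamma\in\Gamma$, I would argue as follows. The forward direction is a computation: if $a\in S^m_\Gamma$, then using the defining formula for $\Op(a)$, the $G$-equivariance of the horocycle bracket \eqref{equivariance}, the behaviour of Helgason's Fourier transform under translations (namely $\widetilde{T_g u}(\lambda,b) = e^{(-i\lambda+\rho)\langle g\cdot o,\,g\cdot b\rangle}\,\tilde u(\lambda,g\cdot b)$, which follows from the change of variables $x\mapsto g^{-1}x$ and \eqref{equivariance}), and the $K$-invariance of $\dbar\lambda$ and of $db$ under the $G$-action on $B$, one checks directly that $T_\gamma^{-1}\Op(a)T_\gamma = \Op(a^\gamma)$ where $a^\gamma(z,\lambda,b) = a(\gamma\cdot z,\lambda,\gamma\cdot b)$. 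Thus $\Gamma$-invariance of the symbol is exactly equivalent to $T_\gamma^{-1}\Op(a)T_\gamma=\Op(a)$, i.e. to $A$ commuting with each $T_\gamma$. This gives both directions at once, provided one knows that the symbol is uniquely determined by the operator, which is immediate from \eqref{complete symbol}: $a(z,\lambda,b)$ is recovered as $(Ae_{\lambda,b})(z)/e_{\lambda,b}(z)$.

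The main obstacle is the $L^2$-boundedness statement, since genuinely proving Calderón–Vaillancourt on $G/K$ requires care with the non-compactness and with the interaction between the $z$-variable and the frequency variables $(\lambda,b)$; the equivariance equivalence, by contrast, is essentially a bookkeeping exercise once the transformation law of $\tilde u$ under $T_g$ is in hand. In the write-up I would therefore state the transformation law $\Op(a^g) = T_g^{-1}\,\Op(a)\,T_g$ for all $g\in G$ as a lemma (valid without the discreteness of $\Gamma$), deduce the equivalence as an immediate corollary by specializing to $g=\gamma\in\Gamma$ and invoking injectivity of $a\mapsto\Op(a)$, and refer to \cite{S} for the detailed proof of $L^2$-continuity, indicating that it proceeds via Plancherel for the Helgason transform and the zero-order symbol bounds.
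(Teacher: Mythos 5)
The paper itself gives no proof of this proposition; it is stated with the remark ``(see \cite{S} for details)'', so there is no internal proof to compare against. Your overall plan for the second assertion --- establish a conjugation law $T_g^{-1}\Op(a)T_g=\Op(a^g)$ for all $g\in G$, then invoke injectivity of $a\mapsto\Op(a)$ via \eqref{complete symbol} --- is the natural one and is sound in outline, and deferring the $L^2$-boundedness to \cite{S} and \cite{Z84} is consistent with what the paper does.

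However, there is a genuine gap in the way you justify the conjugation law. You write that you will use ``the $K$-invariance of $\dbar\lambda$ and of $db$ under the $G$-action on $B$''. But $db$ is only $K$-invariant; it is not $G$-invariant, and this is precisely the delicate point. Under the $G$-action on $B=K/M$ one has the Jacobian
\begin{eqnarray*}
\frac{d(g\cdot b)}{db}=e^{-2\rho\langle g\cdot o,\,g\cdot b\rangle}
\end{eqnarray*}
(quoted in the paper from \cite{He00}). When you carry out the change of variables $b\mapsto g\cdot b$ in the oscillatory-integral expression for $T_g^{-1}\Op(a)T_g u$, this Jacobian produces a factor $e^{2\rho\langle g\cdot o,\,g\cdot b\rangle}$, and the conjugation formula holds only because this exactly cancels the two factors of $e^{-\rho\langle g\cdot o,\,g\cdot b\rangle}$ coming, respectively, from the transformation law of $\tilde u$ and from the kernel $e^{(i\lambda+\rho)\langle z,b\rangle}$ rewritten via \eqref{equivariance}. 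Without exhibiting this cancellation the argument does not close; a naive appeal to invariance of $db$ would actually leave a nontrivial exponential weight and the identity $T_g^{-1}\Op(a)T_g=\Op(a^g)$ would fail. Relatedly, the transformation law you state for the Helgason transform has a sign slip: from \eqref{horocycle bracket}, \eqref{equivariance} and Lemma \ref{facts} one gets $\langle g^{-1}\cdot o,b\rangle=-\langle g\cdot o,g\cdot b\rangle$, hence
\begin{eqnarray*}
\widetilde{T_g u}(\lambda,b)=e^{-(-i\lambda+\rho)\langle g\cdot o,\,g\cdot b\rangle}\,\tilde u(\lambda,g\cdot b),
\end{eqnarray*}
not $e^{+(-i\lambda+\rho)\langle g\cdot o,\,g\cdot b\rangle}\tilde u(\lambda,g\cdot b)$. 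With the correct sign the $i\lambda$-dependent parts of the phase cancel and only the $2\rho$-weight survives, which is then absorbed by the Jacobian as described. Fixing these two points makes the equivariance argument complete.
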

Recall from Section \ref{section boundary values} that if $\phi$ is
an eigenfunction of the Laplace operator with eigenvalue
$-(\langle\lambda,\lambda\rangle+\langle\rho,\rho\rangle)$
($\lambda\in\la^*$) and boundary values $T\in\mathcal{D}'(B)$, then
\begin{eqnarray}\label{pull under}
\phi(z) = \int_B e^{(i\lambda+\rho)\langle z,b\rangle} T(db).
\end{eqnarray}
Let $\left\{\phi_{\lambda_j}\right\}$ denote the eigenfunctions of $\Delta_{\Gamma}$ with corresponding
boundary values $T_{\lambda_j}\in\mathcal{D}'(B)$. Then $a\in S^0_{\Gamma}$ induces a bounded operator on $L^2(X_{\Gamma})$ by
\begin{eqnarray}\label{Definition Op(a)}
\Op(a)\phi_{\lambda_j}(z) = \int_B a(z,b) e^{(i\lambda_{j}+\rho)\langle z,b\rangle} T_{\lambda_j}(db),
\end{eqnarray}
where we used the formula
$\Op(a)e^{(i\lambda+\rho)\langle z,b\rangle}=a(z,b)e^{(i\lambda+\rho)\langle z,b\rangle}$
(cf. \eqref{complete symbol}) and pulled the operator under the integral sign in \eqref{pull under}.

\section{Patterson--Sullivan Distributions}\label{PatSul}

In this section we introduce the central concepts we need to
formulate our results: Intermediate values, the Radon transform,
which really is a time average in our context, and the
Patterson--Sullivan distributions.

\subsection*{Intermediate Values}

To motivate the concept of intermediate values, consider the case
where $G/K=PSU(1,1)/PSO(2)$ is the open unit disk $\mathbb{D}$ with
boundary $B=\left\{z\in\cc: |z|=1\right\}$. Let $\gamma\in G$,
$b,b'\in B$. One has the \emph{intermediate value formula} (cf.
\cite{PJN}, p. 8)
\begin{eqnarray}\label{intermediate value formula}
|\gamma(b)-\gamma(b')|^2 = |\gamma'(b)|\cdot|\gamma'(b')|\cdot|b-b'|^2.
\end{eqnarray}
It follows from \cite{He00}, p. 197, that $\frac{d(\gamma\cdot
b)}{db}=e^{-2\rho\langle\gamma\cdot o,\gamma\cdot b\rangle}$, where
$\rho=\frac{1}{2}$. Then
\begin{eqnarray}\label{generalizing}
|\gamma(b)-\gamma(b')|^2
= e^{-\langle\gamma\cdot o,\gamma\cdot b\rangle}
  e^{-\langle\gamma\cdot o,\gamma\cdot b'\rangle} \cdot|b-b'|^2.
\end{eqnarray}

To generalize this we construct certain functions $d_{\lambda}:
G/MA\rightarrow\cc$, which we call \emph{intermediate values}, and
which satisfy a certain equivariance property generalizing
\eqref{generalizing} (cf. \eqref{equivariance dlambda}). This
property then leads to invariance properties of the
Patterson--Sullivan distributions.

\begin{defn}
By \emph{time reversal} we mean the involution
$\iota(x,\xi)=(x,-\xi)$ on the unit cosphere bundle $S^* X$. Under
$\Gamma\backslash G/M= S^* X_{\Gamma}$ the time reversal map takes
the form $\Gamma g\mapsto \Gamma gw$. We say that a distribution $T$
is \emph{time-reversible} if $\iota^*T=T$. Recall that each
$(b,b')\in B^{(2)}$ is of the form $(g\cdot M,g\cdot wM)\in
B^{(2)}$, where $gMA\in G/MA$ is unique. Since $w^2\in M$, time
reversal means
\begin{eqnarray*}
(b,b') = (g\cdot M,g\cdot wM) \mapsto (gw\cdot M,g\cdot w^2M) = (b',b),
\end{eqnarray*}
which is given by $(b,b')\leftrightarrow(b',b)$.
\end{defn}

\begin{defn}
Given $\lambda\in\la^*$, we define $d_{\lambda}: G/MA\rightarrow\cc$ by
\begin{eqnarray}\label{dlambda on G/MA}
d_{\lambda}(gMA) := e^{(i\lambda+\rho)(H(g)+H(gw))}.
\end{eqnarray}
Recall $w^{-1}aw=a^ {-1}$ ($a\in A$), which implies that
$d_{\lambda}$ is well-defined and time reversal invariant. We call
the functions $d_{\lambda}$ \emph{intermediate values}.
\end{defn}

\begin{lem}\label{equivariance property}
Let $\gamma,g\in G$. Then
\begin{eqnarray}\label{equivariance property2}
d_{\lambda}(\gamma g) = e^{(i\lambda+\rho)(\langle\gamma\cdot o,
\gamma g\cdot M\rangle + \langle\gamma\cdot o, \gamma g\cdot
wM\rangle)} d_{\lambda}(g).
\end{eqnarray}
\end{lem}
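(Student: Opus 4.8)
The plan is to unravel the definition \eqref{dlambda on G/MA} of $d_{\lambda}$ and apply the additivity property of the Iwasawa $H$-projection collected in Lemma \ref{facts}(ii). Writing out the left-hand side, $d_{\lambda}(\gamma g) = e^{(i\lambda+\rho)(H(\gamma g) + H(\gamma g w))}$. The key step is to rewrite each of the two exponents using Lemma \ref{facts}(ii): the first part gives $H(\gamma g) = H(g) + \langle \gamma\cdot o, \gamma g\cdot M\rangle$, and the second part gives $H(\gamma g w) = H(gw) + \langle \gamma\cdot o, \gamma g\cdot wM\rangle$. Adding these two identities and substituting into the exponential yields
\begin{eqnarray*}
d_{\lambda}(\gamma g)
&=& e^{(i\lambda+\rho)\bigl(H(g)+H(gw) + \langle\gamma\cdot o, \gamma g\cdot M\rangle + \langle\gamma\cdot o, \gamma g\cdot wM\rangle\bigr)} \\
&=& e^{(i\lambda+\rho)\bigl(\langle\gamma\cdot o, \gamma g\cdot M\rangle + \langle\gamma\cdot o, \gamma g\cdot wM\rangle\bigr)} \, d_{\lambda}(g),
\end{eqnarray*}
which is exactly \eqref{equivariance property2}.

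The only point requiring a little care is that this computation is being performed on $G$, whereas $d_{\lambda}$ is a priori a function on $G/MA$; one should observe (as already noted in the definition, using $w^{-1}aw = a^{-1}$ and the $M$-invariance built into the $H$-projection being left-$M$-invariant composed on the right) that $g\mapsto e^{(i\lambda+\rho)(H(g)+H(gw))}$ is genuinely right-$MA$-invariant, so the identities on $G$ descend to the quotient and the statement ``$d_\lambda(\gamma g) = \dots d_\lambda(g)$'' is meaningful for cosets. I expect no real obstacle here: the lemma is essentially a bookkeeping consequence of Lemma \ref{facts}(ii), and the main (very minor) subtlety is just making sure the second instance of Lemma \ref{facts}(ii) is invoked with $gw$ in place of $g$ and that $w\in K$ so that the lemma applies verbatim.
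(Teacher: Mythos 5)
Your proof is correct and is precisely the route the paper takes: the paper's proof of Lemma \ref{equivariance property} is simply ``This follows from Lemma \ref{facts}'', and your write-up just spells out that one applies Lemma \ref{facts}(ii) to $H(\gamma g)$ and to $H(\gamma gw)$ and substitutes into the definition \eqref{dlambda on G/MA}.
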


\begin{proof}
This follows from Lemma \ref{facts}.
\end{proof}

Note that by Lemma \ref{space of geodesics} we may interpret
$d_{\lambda}$ as a function on $B^{(2)}$, that is
\begin{eqnarray*}
d_{\lambda}(b,b') = d_{\lambda}(g\cdot M, g\cdot wM) =
e^{(i\lambda+\rho)(H(g)+H(gw))}
\end{eqnarray*}
for $g=g(b,b')$.

\begin{prop}
$d_{\lambda}(g\cdot M, g\cdot wM) = e^{(i\lambda+\rho)(\langle
g\cdot o, g\cdot M\rangle+\langle g\cdot o,g\cdot wM\rangle)}$.
\end{prop}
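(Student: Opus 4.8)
The plan is to recognize this as an immediate consequence of Lemma~\ref{facts}(i) together with the definition \eqref{dlambda on G/MA} of $d_\lambda$. Recall that for $g = g(b,b')$ we have $(b,b') = (g\cdot M, g\cdot wM)$ by Definition~\ref{definition g(b,b')}, and hence by \eqref{dlambda on G/MA},
\begin{eqnarray*}
d_\lambda(g\cdot M, g\cdot wM) = d_\lambda(gMA) = e^{(i\lambda+\rho)(H(g)+H(gw))}.
\end{eqnarray*}
It therefore suffices to rewrite the two Iwasawa projections $H(g)$ and $H(gw)$ in terms of the horocycle bracket.

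First I would invoke Lemma~\ref{facts}(i), which states precisely that $H(g) = \langle g\cdot o, g\cdot M\rangle$ and $H(gw) = \langle g\cdot o, g\cdot wM\rangle$ (here $w\in K$ plays the role of the element called $w$ in that lemma, since our nontrivial Weyl representative lies in $M'\subseteq K$). Substituting these two identities into the exponent gives
\begin{eqnarray*}
d_\lambda(g\cdot M, g\cdot wM) = e^{(i\lambda+\rho)(\langle g\cdot o, g\cdot M\rangle + \langle g\cdot o, g\cdot wM\rangle)},
\end{eqnarray*}
which is exactly the claimed formula. One should note for completeness that the right-hand side does not depend on the choice of representative $g$ in the coset $gMA$: replacing $g$ by $gma$ changes $H(g)$ by $\log a$ and $H(gw)$ by $H(gmaw) = H(gw\cdot(w^{-1}maw)) = H(gw) + \log(w^{-1}aw) = H(gw) - \log a$ using \eqref{w and a} and the fact that $w^{-1}mw\in M$, so the sum $H(g)+H(gw)$ is unchanged --- this is already implicit in the statement that $d_\lambda$ is well-defined on $G/MA$.

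There is essentially no obstacle here: the proposition is a direct translation of Lemma~\ref{facts}(i) into the language of the intermediate values $d_\lambda$ viewed as functions on $B^{(2)} = G/MA$. The only point requiring a small verification is the cocycle-type well-definedness just described, but this was already established when $d_\lambda$ was introduced. Thus the proof is a one-line substitution.
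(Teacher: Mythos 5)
Your proof is correct and is precisely the direct substitution argument the paper intends (the paper states this proposition without a written proof, immediately after recalling the definition $d_\lambda(gMA) = e^{(i\lambda+\rho)(H(g)+H(gw))}$ and with Lemma~\ref{facts}(i) already in hand). The additional check that $H(g)+H(gw)$ is unchanged under $g\mapsto gma$ is a useful sanity check, consistent with the paper's earlier remark that $d_\lambda$ is well-defined on $G/MA$.
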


\begin{prop}
Let $(b,b')\in B^{(2)}$ and $\gamma\in G$. Then
\begin{eqnarray}\label{equivariance dlambda}
(d_{\lambda}\circ\gamma)(b,b')
= d_{\lambda}(\gamma\cdot b, \gamma\cdot b')
= e^{(i\lambda+\rho)(\langle\gamma\cdot o, \gamma\cdot b\rangle
   +\langle\gamma\cdot o, \gamma\cdot b'\rangle)}d_{\lambda}(b, b').
\end{eqnarray}
\end{prop}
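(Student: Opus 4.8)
The statement to be proved is the equivariance formula \eqref{equivariance dlambda}, namely that for $(b,b')\in B^{(2)}$ and $\gamma\in G$ one has
\[
d_{\lambda}(\gamma\cdot b,\gamma\cdot b') = e^{(i\lambda+\rho)(\langle\gamma\cdot o,\gamma\cdot b\rangle + \langle\gamma\cdot o,\gamma\cdot b'\rangle)}\,d_{\lambda}(b,b').
\]
The plan is to reduce everything to Lemma \ref{equivariance property}, which already gives the analogous transformation law for $d_\lambda$ as a function on $G$, and then translate back to $B^{(2)}$ using the identification $B^{(2)}=G/MA$ from Definition \ref{definition g(b,b')}.

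First I would fix a representative $g=g(b,b')\in G$, so that by definition $(b,b')=(g\cdot M,g\cdot wM)$ and $d_\lambda(b,b')=d_\lambda(gMA)=e^{(i\lambda+\rho)(H(g)+H(gw))}$. Applying $\gamma$ gives $(\gamma\cdot b,\gamma\cdot b')=(\gamma g\cdot M,\gamma g\cdot wM)$, so $\gamma g$ is a representative of the coset $g(\gamma\cdot b,\gamma\cdot b')MA$, and hence $d_\lambda(\gamma\cdot b,\gamma\cdot b')=d_\lambda(\gamma g)$ (using that $d_\lambda$ on $G/MA$ is well-defined, as noted after \eqref{dlambda on G/MA}). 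Now Lemma \ref{equivariance property} applies directly with this $g$ and yields
\[
d_{\lambda}(\gamma g) = e^{(i\lambda+\rho)(\langle\gamma\cdot o,\gamma g\cdot M\rangle + \langle\gamma\cdot o,\gamma g\cdot wM\rangle)}\,d_{\lambda}(g).
\]
The second step is purely cosmetic: since $\gamma g\cdot M=\gamma\cdot b$ and $\gamma g\cdot wM=\gamma\cdot b'$, the exponent becomes $\langle\gamma\cdot o,\gamma\cdot b\rangle+\langle\gamma\cdot o,\gamma\cdot b'\rangle$, and $d_\lambda(g)=d_\lambda(gMA)=d_\lambda(b,b')$ by the preceding Proposition. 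Substituting gives exactly the claimed identity. I would also note in passing that the left-hand equality $(d_\lambda\circ\gamma)(b,b')=d_\lambda(\gamma\cdot b,\gamma\cdot b')$ is just the definition of the action of $\gamma$ on functions on $B^{(2)}$, so nothing needs to be checked there.

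There is essentially no obstacle here; the only thing requiring care is the bookkeeping of representatives — one must be sure that $\gamma g$ is genuinely a valid choice of $g(\gamma\cdot b,\gamma\cdot b')$, which is immediate from the $G$-equivariance of the identification $B^{(2)}=G/MA$ established in the Proposition before Definition \ref{definition g(b,b')}, together with the fact that $d_\lambda$ descends to $G/MA$. So in fact the proof is a two-line consequence of Lemma \ref{equivariance property}, and the bulk of the "work" (the cancellations with the Iwasawa projections $H(g)$, $H(gw)$ and the identities in Lemma \ref{facts}) has already been carried out in proving that lemma.
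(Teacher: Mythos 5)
Your proof is correct and follows the same route as the paper: fix a representative $g=g(b,b')$ with $(b,b')=(g\cdot M,g\cdot wM)$, observe $d_\lambda(\gamma\cdot b,\gamma\cdot b')=d_\lambda(\gamma g)$, and invoke Lemma \ref{equivariance property}. You merely spell out the bookkeeping of representatives and the descent to $G/MA$ a bit more explicitly than the paper does.
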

\begin{proof}
Let $g\in G$ such that $(b,b')=(g\cdot M,g\cdot wM)$. Then
$d_{\lambda}(\gamma\cdot b,\gamma\cdot b')=d_{\lambda}(\gamma g)$,
so the assertion follows from Lemma \ref{equivariance property}.
\end{proof}

\subsection*{Invariance Properties}

As in the introduction, let $c_0 \leq c_1\leq c_2\leq \ldots
\rightarrow\infty$ denote the spectrum of $-\Delta_{\Gamma}$ and
$\left\{\phi_{\lambda_j}\right\}$ a fixed
$L^2(X_{\Gamma})$-orthonormal basis of real valued eigenfunctions
with eigenvalues
$c_j=\langle\lambda_j,\lambda_j\rangle+\langle\rho,\rho\rangle\in\rr$.
Then $\lambda_j\in\la^*\cup i\la^*$ and since $c_j\rightarrow\infty$
there are only finitely many $\lambda_j\in i\la^*$, so we may assume
$\lambda_j\in\la^*$ for all $j\in\nn_0$. We only consider
eigenfunctions with exponential growth and denote the corresponding
sequence of distributional boundary values by
$\left\{T_{\lambda_j}\right\}$.

\begin{defn}
The \emph{Patterson--Sullivan distribution $ps_{\lambda_j}$
associated to $\phi_{\lambda_j}$} is the distribution
\begin{eqnarray}\label{ps}
ps_{\lambda_j}(db,db') := d_{\lambda_j}(b,b') \, T_{\lambda_j}(db) T_{\lambda_j}(db').
\end{eqnarray}
on $C_c^{\infty}(B^{(2)})$. The same definition \eqref{ps} extends
$ps_{\lambda_j}$ to a bounded linear functional on the larger space
$d_{\lambda}(b,b')^{-1}\cdot C^{\infty}(B\times B)$.
\end{defn}

\begin{prop}\label{ps invariant}
Suppose that $\phi_{\lambda_j}$ is a $\Gamma$-invariant
eigenfunction of the Laplacian. Let $T_{\lambda_j}$ denote its
boundary values. Then the distribution $ps_{\lambda_j}(db,db')$ is
$\Gamma$-invariant and time reversal invariant.
\end{prop}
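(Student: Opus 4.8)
The plan is to verify both invariance statements directly from the definition \eqref{ps}, using the equivariance of the two building blocks: the boundary values $T_{\lambda_j}$ and the intermediate values $d_{\lambda_j}$. The crucial inputs are already assembled in the excerpt: equation \eqref{boundary values equivariance 1}, which says $T_{\lambda_j}(d\gamma b) = e^{-(i\lambda_j+\rho)\langle\gamma o,\gamma b\rangle}T_{\lambda_j}(db)$ for $\gamma\in\Gamma$, and Proposition \eqref{equivariance dlambda}, which says $d_{\lambda_j}(\gamma\cdot b,\gamma\cdot b') = e^{(i\lambda_j+\rho)(\langle\gamma\cdot o,\gamma\cdot b\rangle+\langle\gamma\cdot o,\gamma\cdot b'\rangle)}d_{\lambda_j}(b,b')$. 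The whole point of introducing the weight $d_{\lambda_j}$ was to make these two exponential cocycle factors cancel.

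First I would check $\Gamma$-invariance. Pulling back $ps_{\lambda_j}$ by $\gamma\in\Gamma$ acting diagonally on $B^{(2)}$ and substituting, the measure $T_{\lambda_j}(d\gamma b)\,T_{\lambda_j}(d\gamma b')$ contributes the factor $e^{-(i\lambda_j+\rho)(\langle\gamma o,\gamma b\rangle+\langle\gamma o,\gamma b'\rangle)}$ by \eqref{boundary values equivariance 1} applied twice, while $d_{\lambda_j}(\gamma\cdot b,\gamma\cdot b')$ contributes the reciprocal factor $e^{(i\lambda_j+\rho)(\langle\gamma o,\gamma b\rangle+\langle\gamma o,\gamma b'\rangle)}$ by \eqref{equivariance dlambda}. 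These cancel, leaving $d_{\lambda_j}(b,b')\,T_{\lambda_j}(db)\,T_{\lambda_j}(db') = ps_{\lambda_j}(db,db')$; hence $\gamma^* ps_{\lambda_j} = ps_{\lambda_j}$. One should phrase this carefully as an identity of distributions paired against a test function $f\in C_c^\infty(B^{(2)})$, changing variables $b\mapsto\gamma b$, $b'\mapsto\gamma b'$, rather than manipulating ``$d\gamma b$'' formally, but the computation is the one just sketched.

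For time-reversal invariance, recall from the Definition of time reversal that on $B^{(2)}$ the involution $\iota$ is simply $(b,b')\mapsto(b',b)$. The factor $T_{\lambda_j}(db)\,T_{\lambda_j}(db')$ is visibly symmetric under swapping $b$ and $b'$, and $d_{\lambda_j}$ is time-reversal invariant by construction — this is exactly the observation, recorded right after \eqref{dlambda on G/MA}, that $w^{-1}aw=a^{-1}$ forces $d_{\lambda_j}(gMA)=e^{(i\lambda_j+\rho)(H(g)+H(gw))}$ to be unchanged when $g$ is replaced by $gw$, which is the group-level form of $(b,b')\leftrightarrow(b',b)$. Therefore $\iota^* ps_{\lambda_j}=ps_{\lambda_j}$. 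I do not expect a genuine obstacle here: the proposition is essentially a bookkeeping consequence of the way $d_{\lambda_j}$ was designed, and the only point requiring a little care is the measure-theoretic justification of the change of variables in the $\Gamma$-invariance step (legitimate because $\gamma$ acts as a diffeomorphism of $B^{(2)}$ and $T_{\lambda_j}$ is a genuine distribution on $B$, so $T_{\lambda_j}\otimes T_{\lambda_j}$ is a well-defined distribution on $B\times B$ restricting to $B^{(2)}$, as guaranteed by the estimate \eqref{BtimesB}).
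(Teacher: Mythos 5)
Your proposal is correct and follows essentially the same route as the paper: $\Gamma$-invariance is obtained by pairing against a test function, applying the boundary-value equivariance \eqref{boundary values equivariance 1} twice, and cancelling against the factor produced by \eqref{equivariance dlambda}; time-reversal invariance is read off from the symmetry of $T_{\lambda_j}\otimes T_{\lambda_j}$ together with the time-reversal invariance of $d_{\lambda_j}$ built into its definition. The paper merely states the time-reversal part as ``obvious,'' so your slightly more explicit unpacking is a harmless elaboration of the same argument.
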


\begin{proof}
Time reversibility is obvious. Given a test function $f$ and
$\gamma\in\Gamma$ we have
\begin{eqnarray*}
ps_{\lambda_j}(f\circ\gamma^{-1})
= (T_{\lambda_j}\otimes T_{\lambda_j})(d_{\lambda_j}\cdot(f\circ\gamma^{-1}))
= (\gamma T_{\lambda_j}\otimes \gamma T_{\lambda_j})((d_{\lambda_j}\circ\gamma)\cdot f).
\end{eqnarray*}
It follows from \eqref{boundary values equivariance 1} that
\begin{eqnarray*}
T_{\lambda_j}(d\gamma b)T_{\lambda_j}(d\gamma b')
= e^{-(i\lambda_j+\rho)\langle\gamma\cdot o,\gamma\cdot b\rangle}
  e^{-(i\lambda_j+\rho)\langle\gamma\cdot o,\gamma\cdot b'\rangle}T_{\lambda_j}(db)T_{\lambda_j}(db').
\end{eqnarray*}
Multiplying with \eqref{equivariance dlambda} completes the proof of
$\Gamma$-invariance.
\end{proof}

Recall our notation from \ref{definition g(b,b')}: Let $g(b,b')MA\in
G/MA$ denote the coset corresponding to $(b,b')\in B^{(2)}$.

\begin{defn}
The \emph{Radon transform} on $SX=G/M$ is given by
\begin{eqnarray*}
\mathcal{R}f(b,b') := \int_A f(g(b,b')aM) da,
\end{eqnarray*}
whenever the integral exists. \cite{He00}, p. 91, applied to the subgroup $MA$, yields:
\end{defn}

\begin{lem}\label{Radon compact}
$\mathcal{R}: C_c(SX)\rightarrow C_c(B^{(2)})$.
\end{lem}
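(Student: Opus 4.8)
The plan is to reduce the statement to the citation from \cite{He00}, p. 91, concerning integration over closed subgroups. First I would recall the general principle: if $L$ is a closed subgroup of a Lie group $G$ and $f\in C_c(G/M)$ with $M\subseteq L$, then the fibre integral $\bar f(gL) := \int_{L/M} f(g\ell M)\, d(\ell M)$ defines an element of $C_c(G/L)$, provided the relevant invariant measures exist; continuity and compact support of $\bar f$ are standard consequences of the corresponding facts for $f$ together with properness of the projection $G/M \to G/L$ on the level of supports. Here I would take $L = MA$, so that $L/M \cong A$ and $G/L = G/MA = B^{(2)}$ by the Proposition identifying $B^{(2)}$ with $G/MA$. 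Then $\mathcal{R}f(b,b') = \int_A f(g(b,b')aM)\, da$ is exactly this fibre integral, written in the coordinates of Definition \ref{definition g(b,b')}.

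The key steps, in order, would be: (1) note $M$ is a closed (compact) subgroup of $MA$, which is a closed subgroup of $G$, so the tower $G/M \to G/MA$ of homogeneous spaces is well defined and the fibres are diffeomorphic to $A$; (2) check that the Haar measure $da$ on $A$ is (up to the fixed normalization of Section \ref{Prelim}) the invariant measure on the fibre $MA/M$, so that $\mathcal{R}$ is precisely the operator from \cite{He00}, p. 91, applied with the subgroup $MA$; (3) invoke that reference to conclude $\mathcal{R}f \in C_c(G/MA) = C_c(B^{(2)})$, with the support of $\mathcal{R}f$ contained in the image of $\supp f$ under $G/M \to G/MA$, which is compact; (4) verify well-definedness in the chosen coordinates, i.e. that replacing $g(b,b')$ by $g(b,b')m_0a_0$ with $m_0\in M$, $a_0\in A$ merely reparametrizes the $A$-integral and leaves $\mathcal{R}f(b,b')$ unchanged — this uses that $A$ is abelian and that $M$ normalizes $A$, so $g(b,b')m_0a_0 a M = g(b,b')(m_0 a_0 a)M$ sweeps out the same fibre.

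I do not expect a genuine obstacle here; the statement is essentially a bookkeeping translation of a standard integration-over-subgroups fact into the geometric language of $B^{(2)}$. If anything, the only point requiring a word of care is step (4), the independence of the choice of representative $g(b,b')$ modulo $MA$, but this is immediate from \eqref{w and a} and the fact that $M$ normalizes $A$, exactly as in the proof that $d_{\lambda}$ is well defined on $G/MA$. Continuity of $\mathcal{R}f$ as a function on $B^{(2)}$ likewise follows from smoothness of the section $g(b,b')$ on the (open) manifold $B^{(2)}$ together with dominated convergence, the dominating function coming from the compact support of $f$.
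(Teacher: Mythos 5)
Your proposal is correct and follows the same route the paper takes: the paper's entire ``proof'' is the single sentence invoking \cite{He00}, p.~91, applied to the closed subgroup $MA$ of $G$, together with the identification $B^{(2)}=G/MA$. Your steps (1)--(4) simply spell out the bookkeeping behind that citation, with the one small slip that well-definedness modulo $MA$ rests on $M$ centralizing $A$ and unimodularity of $A$ rather than on \eqref{w and a}.
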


\begin{defn}
Let $\mathcal{F}$ denote a bounded fundamental domain for $\Gamma$
in $X$. Following \cite{AZ}, pp. 380-381, we say that $\chi\in
C_c^{\infty}(X)$ is a \emph{smooth fundamental domain cutoff
function} if it satisfies
\begin{eqnarray}\label{smooth fundamental domain cutoff}
\sum_{\gamma\in\Gamma}\chi(\gamma z) = 1 \,\,\,\,\, \forall z\in X.
\end{eqnarray}
Such a function can for example be constructed by taking $\nu\in
C_c^{\infty}(X)$, $\nu=1$ on $\mathcal{F}$, and setting
$\chi(z)=\nu(z)\cdot(\sum_{\gamma\in\Gamma}\nu(\gamma z))^{-1}$. If
$\chi$ satisfies \eqref{smooth fundamental domain cutoff}, then
\begin{eqnarray}
\int_{\mathcal{F}} f \,  dz = \int_X \chi f \, dz, \,\,\,\,\, f\in C(X_{\Gamma}).
\end{eqnarray}
\end{defn}

The following property of these cutoffs is proven in \cite{AZ}, Lemma 3.5:

\begin{prop}\label{independent}
Let $T\in\mathcal{D}'(SX)$ be a $\Gamma$-invariant distribution. Let
$a$ be a $\Gamma$-invariant smooth function on $SX$. Then for any
$a_1,a_2\in\mathcal{D}(SX)$ such that
$\sum_{\gamma\in\Gamma}a_j(\gamma\cdot(z,b))=a(z,b)$ ($j=1,2$) we
have $\langle a_1,T\rangle_{SX}=\langle a_2,T\rangle_{SX}$.
\end{prop}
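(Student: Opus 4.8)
The plan is to reduce the assertion to the following special case: if $b\in\mathcal D(SX)$ satisfies $\sum_{\gamma\in\Gamma}b(\gamma\cdot x)=0$ for all $x\in SX$, then $\langle b,T\rangle_{SX}=0$ for every $\Gamma$-invariant $T\in\mathcal D'(SX)$. Granting this, one takes $b:=a_1-a_2\in\mathcal D(SX)$; since $a_1,a_2$ have compact support and $\Gamma$ acts properly on $SX=G/M$ (the stabiliser $M$ is compact and $\Gamma$ is closed in $G$; equivalently, $SX\to X$ has compact fibre $B$ and $\Gamma$ acts properly discontinuously on $X$), the orbit sums $\sum_\gamma a_j(\gamma\cdot x)$ are locally finite, so the hypothesis gives $\sum_\gamma b(\gamma\cdot x)=a(x)-a(x)=0$, and hence $\langle a_1,T\rangle_{SX}=\langle a_2,T\rangle_{SX}$ will follow.

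To prove the special case I would use the fundamental domain cutoff. Since $SX=X\times B$ with $B$ compact, the pullback $\widetilde\chi:=\chi\circ\mathrm{pr}_X\in\mathcal D(SX)$ of a smooth fundamental domain cutoff $\chi$ for $\Gamma$ on $X$ (see \eqref{smooth fundamental domain cutoff}) satisfies $\sum_{\gamma\in\Gamma}\widetilde\chi(\gamma\cdot x)=1$ for all $x\in SX$, because $\Gamma$ acts diagonally on $X\times B$. Write $(\gamma\cdot f)(x):=f(\gamma^{-1}x)$ for the induced left $\Gamma$-action on functions, so that $\Gamma$-invariance of $T$ reads $\langle\gamma\cdot f,T\rangle_{SX}=\langle f,T\rangle_{SX}$. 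Put $w_\gamma:=\widetilde\chi\cdot(\gamma^{-1}\cdot b)$; each $w_\gamma$ is smooth with $\mathrm{supp}\,w_\gamma\subseteq\mathrm{supp}\,\widetilde\chi$, hence $w_\gamma\in\mathcal D(SX)$, and $w_\gamma\neq 0$ only for the finitely many $\gamma$ with $\gamma\cdot\mathrm{supp}\,\widetilde\chi\cap\mathrm{supp}\,b\neq\emptyset$.

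A short computation using $\sum_\gamma\widetilde\chi(\gamma\cdot x)=1$ shows $\sum_{\gamma\in\Gamma}\gamma\cdot w_\gamma=b$, while the hypothesis gives $\sum_{\gamma\in\Gamma}w_\gamma(x)=\widetilde\chi(x)\sum_\gamma b(\gamma\cdot x)=0$. Subtracting the two identities,
\[
b=\sum_{\gamma\in\Gamma}\bigl(\gamma\cdot w_\gamma-w_\gamma\bigr),
\]
a \emph{finite} sum of coboundaries. Pairing with $T$ term by term and using $\langle\gamma\cdot w_\gamma,T\rangle_{SX}=\langle w_\gamma,T\rangle_{SX}$ yields $\langle b,T\rangle_{SX}=0$, which is what we needed.

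The only delicate point is the bookkeeping of convergence: one must check that the orbit sums $\sum_\gamma b(\gamma\cdot x)$, $\sum_\gamma\gamma\cdot w_\gamma$ and $\sum_\gamma w_\gamma$ are locally finite and that the final telescoped identity has only finitely many nonzero summands. This is precisely where properness of the $\Gamma$-action on $SX=G/M$, together with compactness of $B$ and of the supports of $a_1,a_2$ and $\chi$, enters; once this is granted, the remaining manipulations are purely formal. (This is essentially \cite{AZ}, Lemma~3.5, transported to the present rank one setting.)
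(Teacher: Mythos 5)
Your argument is correct and is essentially the partition--of--unity/telescoping argument that underlies \cite{AZ}, Lemma 3.5, which the paper simply cites without reproducing. The reduction to $b=a_1-a_2$ with vanishing orbit sums, the identity $\sum_{\gamma}(\gamma\cdot w_\gamma)(x)=b(x)\sum_\gamma\widetilde\chi(\gamma^{-1}x)=b(x)$ against $\sum_\gamma w_\gamma=0$, and the finiteness bookkeeping via properness and compact supports are all in order, so the proposal is a faithful, self-contained fill-in of the cited lemma.
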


Given $T$ and $a$ as in Proposition \ref{independent} and if
moreover $\chi_j$ ($j=1,2$) are smooth fundamental domain cutoffs,
then $a_j=\chi_j a$ satisfy the assumptions of the proposition.
Hence $\langle a,T\rangle_{SX_{\Gamma}}:=\langle \chi
a,T\rangle_{SX}$ defines a distribution on the quotient
$SX_{\Gamma}$ and this definition is independent of the choice of
$\chi$.

\begin{defn}\label{Definition Patterson Sullivan}
\begin{itemize}
\item[(1)] The \emph{Patterson--Sullivan distributions} $PS_{\lambda_j}$ on $SX$ are defined by
\begin{eqnarray*}
\langle a,PS_{\lambda_j}\rangle_{SX} := \int_{(B\times
B)\setminus\Delta}(\mathcal{R}a)(b,b') \, ps_{\lambda_j}(db,db').
\end{eqnarray*}
\item[(2)] On $SX_{\Gamma}=\Gamma\backslash SX$ we define the Patterson--Sullivan distributions by
\begin{eqnarray*}
\langle a,PS_{\lambda_j}\rangle_{SX_{\Gamma}} := \langle\chi a,PS_{\lambda_j}\rangle_{SX},
\end{eqnarray*}
where $\chi$ is a smooth fundamental domain cutoff.
\item[(3)] We define normalized Patterson--Sullivan distributions
\begin{eqnarray}\label{normalized}
\widehat{PS}_{\lambda_j} = \frac{1}{\langle 1,PS_{\lambda_j}\rangle_{SX_{\Gamma}}}PS_{\lambda_j},
\end{eqnarray}
which satisfy the normalization condition $\langle
1,\widehat{PS}_{\lambda_j}\rangle_{SX_{\Gamma}}=1$. Note that
$1=\langle 1,W_{\lambda_j}\rangle_{SX_{\Gamma}}$.
\end{itemize}
\end{defn}

In view of Proposition \ref{independent} the definitions made in
\ref{Definition Patterson Sullivan} do not depend on $\chi$.
Consider the expression
\begin{eqnarray*}
PS_{\lambda_j}(a)=\langle a,PS_{\lambda_j}\rangle = \int_{B^{(2)}}
\, d_{\lambda_j}(b,b') \, \mathcal{R}(a)(b,b') \,
T_{\lambda_j}(db)\, T_{\lambda_j}(db').
\end{eqnarray*}
$PS_{\lambda_j}(a)$ is defined if $d_{\lambda_j} \mathcal{R}(a) \in
C^{\infty}(B\times B)$, which is the case for $a\in
C_c^{\infty}(SX)$, since then $\mathcal{R}a\in
C_c^{\infty}(B^{(2)})$, which in turn implies
$d_{\lambda_j}\mathcal{R}(a)\in C_c^{\infty}(B^{(2)}) \subset
C_c^{\infty}(B\times B) = C^{\infty}(B\times B)$.

As an immediate consequence of Proposition \ref{independent} we
obtain:
\begin{prop}
Each $PS_{\lambda_j}$ is a geodesic flow invariant and
$\Gamma$-invariant distribution on $G/M=SX$. On the quotient
$SX_{\Gamma}$, $PS_{\lambda_j}$ still is invariant under the
geodesic flow.
\end{prop}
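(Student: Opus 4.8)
The plan is to deduce both invariance properties of $PS_{\lambda_j}$ from the corresponding invariance properties of the already-established objects: the $\Gamma$-invariance and time-reversal invariance of $ps_{\lambda_j}$ on $B^{(2)}$ (Proposition \ref{ps invariant}), the equivariance of the Radon transform under the $G$-action, and the independence-of-cutoff result (Proposition \ref{independent}). The key observation is that the Radon transform intertwines the $G$-action on $SX=G/M$ with the $G$-action on $B^{(2)}$. Concretely, if $(T_g a)(u)=a(g\cdot u)$ for $u\in G/M$, then $\mathcal{R}(T_\gamma a)(b,b') = \int_A a(\gamma\, g(b,b')\, aM)\,da$; writing $\gamma\cdot(b,b')=(\gamma\cdot b,\gamma\cdot b')$ we have $g(\gamma\cdot b,\gamma\cdot b')MA = \gamma\, g(b,b')MA$, so after absorbing the $A$-factor (using that $A$ is unimodular and that right translation by $A$ preserves $da$) one gets $\mathcal{R}(T_\gamma a)(b,b') = \mathcal{R}(a)(\gamma\cdot b,\gamma\cdot b') = (\mathcal{R}a)\circ\gamma\,(b,b')$. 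First I would record this equivariance as a short lemma-style computation.

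Second, for $\Gamma$-invariance on $SX$: pair $PS_{\lambda_j}$ with $T_\gamma a$ for $\gamma\in\Gamma$ and $a\in C_c^\infty(SX)$. By the definition in \ref{Definition Patterson Sullivan}(1) this is $\langle \mathcal{R}(T_\gamma a), ps_{\lambda_j}\rangle_{B^{(2)}} = \langle (\mathcal{R}a)\circ\gamma, ps_{\lambda_j}\rangle_{B^{(2)}}$ by the equivariance just established, and this equals $\langle \mathcal{R}a, ps_{\lambda_j}\rangle_{B^{(2)}}$ because $ps_{\lambda_j}$ is $\Gamma$-invariant by Proposition \ref{ps invariant}. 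Hence $\langle T_\gamma a, PS_{\lambda_j}\rangle_{SX} = \langle a, PS_{\lambda_j}\rangle_{SX}$, i.e. $PS_{\lambda_j}$ is $\Gamma$-invariant. The geodesic-flow invariance on $SX$ is even simpler: the geodesic flow is right translation by $A$ on $G/M$ (Remark \ref{unit tangent bundle}), and the Radon transform is precisely the integral over the $A$-orbit, so $\mathcal{R}(R_{a_0}a) = \mathcal{R}(a)$ for all $a_0\in A$; therefore $\langle R_{a_0}a, PS_{\lambda_j}\rangle_{SX} = \langle a, PS_{\lambda_j}\rangle_{SX}$ directly, with no reference to $ps_{\lambda_j}$ needed at all.

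Finally, for the descent to the quotient: $PS_{\lambda_j}$ on $SX$ is $\Gamma$-invariant (just shown) and the geodesic flow on $SX_\Gamma = \Gamma\backslash SX$ is induced by right translation by $A$, which commutes with the left $\Gamma$-action, so it is well-defined on the quotient. For $a\in C^\infty(SX_\Gamma)$ and $a_0\in A$ we must check $\langle R_{a_0}a, PS_{\lambda_j}\rangle_{SX_\Gamma} = \langle a, PS_{\lambda_j}\rangle_{SX_\Gamma}$, i.e. $\langle \chi\cdot(R_{a_0}a), PS_{\lambda_j}\rangle_{SX} = \langle \chi\cdot a, PS_{\lambda_j}\rangle_{SX}$ for a smooth fundamental domain cutoff $\chi$. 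Here the point is that $\chi\cdot(R_{a_0}a)$ and $R_{a_0}(\chi\cdot a)$ need not be equal as functions on $SX$, but they have the same $\Gamma$-periodization (namely $R_{a_0}a$ viewed on $SX_\Gamma$), since $R_{a_0}$ commutes with the $\Gamma$-action and $\sum_\gamma \chi(\gamma z)=1$. By Proposition \ref{independent} applied to the $\Gamma$-invariant distribution $PS_{\lambda_j}$ on $SX$, the pairings agree; combined with the flow-invariance of $PS_{\lambda_j}$ on $SX$ this gives the claim. The main—really the only—obstacle is being careful with the bookkeeping in this last step: the cutoff does not commute with the flow, so one genuinely needs Proposition \ref{independent} rather than a naive substitution. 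Everything else is formal transport of invariance through the two intertwining maps $\mathcal{R}$ and the periodization.
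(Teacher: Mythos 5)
Your proof is correct and it fills in exactly the argument the paper leaves implicit (the paper simply states the proposition as ``an immediate consequence of Proposition \ref{independent}''). The three ingredients you isolate --- the $G$-equivariance of $\mathcal{R}$ (via unimodularity of $A$ and the fact that $M$ centralizes $A$), the $\Gamma$-invariance of $ps_{\lambda_j}$ from Proposition \ref{ps invariant}, and the use of Proposition \ref{independent} to pass the flow-invariance through the cutoff on the quotient --- are precisely what the paper's ``immediate consequence'' rests on, so the approaches coincide; your write-up is simply more explicit about the last step, where $\chi\cdot(R_{a_0}a)$ and $R_{a_0}(\chi\cdot a)$ differ as functions on $SX$ but share a $\Gamma$-periodization.
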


\subsection*{Proof of Theorem \ref{Intertwining Formula diagonal}}\label{Proof subsection}
\begin{lem}
$L_{\lambda_j}: C_c^{\infty}(G)\rightarrow C_c^{\infty}(G)$.
\end{lem}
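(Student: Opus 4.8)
The plan is: fix $a\in C_c^{\infty}(G)$, write $C:=\supp a$, and show in turn that the defining integral exists for every $g$, that $L_{\lambda_j}a$ is smooth, and that its support is compact. The first two points are soft and rest on one observation; the third carries the content.

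First I would note that for fixed $g$ the integrand $n\mapsto e^{-(i\lambda_j+\rho)(H(nw))}a(gn)$ vanishes off $\{n\in N:gn\in C\}=g^{-1}C\cap N$, a closed subset of the compact set $g^{-1}C$, hence compact; so the integral exists for all $g$. For smoothness, fix a compact $\Omega\subseteq G$; for $g\in\Omega$ the $n$-support lies in the fixed compact set $N_{\Omega}:=\Omega^{-1}C\cap N$, so on $\Omega$ we may write $L_{\lambda_j}a(g)=\int_{N_{\Omega}}e^{-(i\lambda_j+\rho)(H(nw))}a(gn)\,dn$. Any right-invariant differential operator $D$ on $G$ acts on the $g$-slot by $D_g\bigl[a(gn)\bigr]=(Da)(gn)$, with $Da\in C_c^{\infty}(G)$; since right-invariant vector fields span every tangent space, it suffices that each integral $\int_{N_{\Omega}}e^{-(i\lambda_j+\rho)(H(nw))}(Da)(gn)\,dn$ be continuous in $g$, which holds because the integrand is continuous on the compact set $\overline{\Omega}\times N_{\Omega}$. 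Hence $L_{\lambda_j}a\in C^{\infty}(\Omega)$ for every compact $\Omega$, i.e. $L_{\lambda_j}a\in C^{\infty}(G)$.

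For the support I would argue: if $L_{\lambda_j}a(g)\neq0$ then $gn\in C$ for some $n\in N$, and since $N$ is a subgroup, $gn=k(g)e^{H(g)}\bigl(n(g)n\bigr)$ is an Iwasawa decomposition, so $k(gn)=k(g)$, $H(gn)=H(g)$, forcing $k(g)\in k(C)$ and $H(g)\in H(C)$ — both compact. Write $g=g_0\,n(g)$ with $g_0:=k(g)e^{H(g)}$ (so $n(g_0)=e$); substituting $m=n(g)n$ and using $N_1:=n(C)$, a fixed compact subset of $N$,
\[
L_{\lambda_j}a(g)=\int_{N_1}e^{-(i\lambda_j+\rho)\bigl(H(n(g)^{-1}mw)\bigr)}\,a(g_0 m)\,dm ,
\]
with $g_0$ running over the compact set $\Omega_0:=k(C)\exp(H(C))$. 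The one remaining unbounded direction is $n(g)\in N$, which I would control by the rank-one identity $n(g)^{-1}mw=w\bigl(w^{-1}n(g)^{-1}mw\bigr)$, $w^{-1}n(g)^{-1}mw\in w^{-1}Nw=\overline{N}$, giving (since $w\in K$) $H(n(g)^{-1}mw)=H\bigl(w^{-1}n(g)^{-1}mw\bigr)$. As $n(g)\to\infty$ in $N$ this $\overline{N}$-element tends to infinity, uniformly for $m\in N_1$, and on $\overline{N}$ the function $\overline{n}\mapsto\rho(H(\overline{n}))$ tends to $+\infty$ at infinity with relatively compact sublevel sets — this is the quantitative content of the convergence $\int_{\overline{N}}e^{-2\rho(H(\overline{n}))}\,d\overline{n}=1$. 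Feeding this back shows that $L_{\lambda_j}a$, together with each of its $g$-derivatives (which are of the same shape, $D L_{\lambda_j}a=L_{\lambda_j}(Da)$ with $Da\in C_c^{\infty}(G)$ for $D$ right-invariant), vanishes once $n(g)$ leaves a compact subset of $N$ depending only on $C$. With the boundedness of the $k$- and $\mathfrak{a}$-components this makes $\supp L_{\lambda_j}a$ compact, so $L_{\lambda_j}a\in C_c^{\infty}(G)$.

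The hard part will be this last step: turning the growth of $H(\cdot\,w)$ at infinity on $\overline{N}$ into a genuine compact confinement of the $n(g)$-direction, uniformly over the compact family $g_0\in\Omega_0$. I would do this inside the big Bruhat cell $\overline{N}MAN$ (open and dense in $G$), where $H(\overline{n}\,w)$ and the constraint $g_0 m\in C$ both become explicit in the $\overline{N}$-, $M$-, $A$-, $N$-coordinates, so the combined restrictions on $n(g)$ can be read off and shown to be independent of $g_0$. Everything before that is routine.
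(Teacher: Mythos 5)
Your first two steps are fine, and for existence/smoothness your observation is even a little cleaner than the paper's: for fixed $g$ the integrand is supported in the compact set $g^{-1}C\cap N$, so convergence and differentiation under the integral sign are routine. (The paper instead bounds the weight, using $\rho(H(\overline{n}))\geq 0$ on $\overline{N}$, and then simply quotes Helgason's theorem on integration over a closed subgroup, \cite{He00}, p.~91, applied to $N$.)

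The genuine gap is exactly the step you flagged as ``the hard part'', and it cannot be closed, because the inference is a non sequitur: decay of the weight does not produce vanishing. In your reduced formula the amplitude $a(g_0m)$, $m\in N_1$, is completely independent of $n(g)$, and the weight $e^{-(i\lambda_j+\rho)H(n(g)^{-1}mw)}$ is never zero; so as $n(g)\to\infty$ the integral tends to $0$ but is in general nonzero. Concretely, in $G=SL(2,\rr)$ take $a\geq 0$ supported near $e$ with $\int_N a(n)\,dn>0$ and $g=n_0^{-1}$ with $n_0\in N$ large: the weight and phase are nearly constant on the compact set $N_1$, so $L_{\lambda_j}a(n_0^{-1})=e^{-(i\lambda_j+\rho)H(n_0w)}\left(\int_N a(n)\,dn+o(1)\right)\neq 0$ for all large $n_0$. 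Hence $\supp L_{\lambda_j}a$ really does escape to infinity in the $n(g)$-direction; it is contained in $C\,N$, whose image in $G/N$ is compact, but it is not compact in $G$, and no Bruhat-cell bookkeeping will confine $n(g)$ to a compact set. What your (correct) computations actually establish is the statement that does hold and is what the later arguments need: $L_{\lambda_j}a$ is smooth, its $K$- and $A$-Iwasawa components are confined to $k(C)$ and $H(C)$, it is supported in $C\,N$ (compact support modulo $N$), and it decays in the remaining $N$-direction because $\rho(H(\overline{n}))\to\infty$. This is also the honest reading of the paper's two-line proof: boundedness of the weight plus the closed-subgroup integration theorem give compactness only at the level of $G/N$, which, combined with smoothness, is what is used when $L_{\lambda_j}(\chi a)$ is subsequently averaged over $A$ and paired with the boundary values. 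So replace the final claim of literal compact support in $G$ by this weaker statement rather than trying to prove it.
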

\begin{proof}
It is well-known (cf. \cite{He00}, Ch. IV, \S 6, Corollary 6.6) that
\begin{eqnarray}
\rho(H(\overline{n}))\geq0 \,\,\,\,\, \forall \, \overline{n}\in\overline{N}.
\end{eqnarray}
Hence the weight $|e^{-(i\lambda_j+\rho)H(nw)}|\leq C$ is bounded by a constant. The assertion follows from \cite{He00}, p. 91, applied to the closed subgroup $N$ of $G$.
\end{proof}

The following formula is the key tool in the proof of Theorem \ref{Intertwining Formula diagonal}.
\begin{lem}\label{formula}
Let $a\in C^{\infty}(SX)$, $(b,b')\in B^{(2)}$. Then
\begin{eqnarray}\label{special}
\int_X \chi a(z,b) e^{(i\lambda_j+\rho)(\langle z,b\rangle+\langle z,b'\rangle)} \, dz
= d_{\lambda_j}(b,b') \, \mathcal{R}(L_{\lambda_j} \chi a)(b,b').
\end{eqnarray}
\end{lem}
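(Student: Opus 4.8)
The plan is to compute the left-hand side of \eqref{special} by unwinding the definitions of the horocyclic brackets, the intermediate value $d_{\lambda_j}$, the Radon transform $\mathcal{R}$, and the operator $L_{\lambda_j}$, and then matching the two sides. First I would fix $(b,b')\in B^{(2)}$ and pick the distinguished representative $g=g(b,b')\in G$, so that $(b,b')=(g\cdot M,g\cdot wM)$. By Lemma~\ref{space of geodesics}, the set of geodesics is $G/MA$, and the geodesic joining $b$ and $b'$ is the image under $g$ of the standard geodesic $t\mapsto a_t\cdot o$. Using the identification $SX=G/M$ from Remark~\ref{unit tangent bundle}, the points of $SX$ lying over this geodesic are exactly $ga_tM$ for $a_t\in A$, which is precisely what the Radon transform integrates over: $\mathcal{R}(F)(b,b')=\int_A F(g(b,b')aM)\,da$.

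Next I would rewrite the integral $\int_X \chi a(z,b)\,e^{(i\lambda_j+\rho)(\langle z,b\rangle+\langle z,b'\rangle)}\,dz$ by changing variables $z=g(b,b')\cdot y$ and applying the equivariance formula \eqref{equivariance}, $\langle g\cdot y,g\cdot b\rangle=\langle y,b\rangle+\langle g\cdot o,g\cdot b\rangle$, to both brackets. The two ``shift'' terms combine to give $e^{(i\lambda_j+\rho)(\langle g\cdot o,g\cdot M\rangle+\langle g\cdot o,g\cdot wM\rangle)}$, which by Lemma~\ref{facts}(i) equals $e^{(i\lambda_j+\rho)(H(g)+H(gw))}=d_{\lambda_j}(g)=d_{\lambda_j}(b,b')$; this pulls out of the integral and accounts for the factor $d_{\lambda_j}(b,b')$ on the right-hand side. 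It remains to identify the remaining integral $\int_X (\chi a)(g\cdot y,b)\,e^{(i\lambda_j+\rho)(\langle y,b\rangle+\langle y,b'\rangle)}\,dy$ with $\mathcal{R}(L_{\lambda_j}\chi a)(b,b')$; here I use that $G$-invariance of $dx$ makes the substitution legitimate, and that $b=g\cdot M$, $b'=g\cdot wM$ so the brackets $\langle y,b\rangle$, $\langle y,b'\rangle$ are taken against the standard boundary points $M$ and $wM$.

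To finish, I would use the measure decomposition $\int_X f_3(x)\,dx=\int_{AN}f_3(an\cdot o)\,da\,dn$ from \eqref{integral formula AN and G/K}. Writing $y=an\cdot o$, the bracket $\langle an\cdot o,M\rangle$ against the forward point $M$ of the standard geodesic is governed by the Iwasawa projection $H$, while $\langle an\cdot o,wM\rangle$ against the backward point $wM$ is governed by $H(\,\cdot\,w)$; carrying out the $n$-integral produces exactly the kernel $e^{-(i\lambda_j+\rho)(H(nw))}$ appearing in the definition of $L_{\lambda_j}$, and the residual $a$-integral over $A$ is the Radon transform $\mathcal{R}$ evaluated at $(b,b')=(g\cdot M,g\cdot wM)$. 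I expect the main obstacle to be the careful bookkeeping of the horocyclic bracket identities and Iwasawa-projection formulae (Lemmas~\ref{invariance0} and~\ref{facts}, and the relation $waw^{-1}=a^{-1}$) needed to see that, after the change of variables and the decomposition of $dx$, the $N$-integral reassembles precisely into $L_{\lambda_j}(\chi a)$ evaluated along the geodesic — in particular getting the argument $nw$ (rather than, say, $n$ or $wn$) and the sign of $\lambda_j$ correct, and checking that everything is independent of the choice of representative $g$ modulo $MA$.
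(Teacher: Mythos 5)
Your proposal is correct and follows essentially the same route as the paper: the paper proves this lemma by observing it is the special case $\lambda_j=\lambda_k$ of Lemma~\ref{general version}, whose proof is precisely your computation — change variables $z\mapsto g(b,b')\cdot z$, decompose $dz$ via $\int_{AN}\cdot\,da\,dn$, reduce $\chi a(gan\cdot o,b)$ to $\chi a(ganM)$ using that $AN$ fixes $M$, and evaluate $\langle gan\cdot o,g\cdot M\rangle$ and $\langle gan\cdot o,g\cdot wM\rangle$ via \eqref{equivariance}, $H(n^{-1}w)=H(nw)$, and $waw^{-1}=a^{-1}$. The only cosmetic difference is that you extract the factor $d_{\lambda_j}(g)=e^{(i\lambda_j+\rho)(H(g)+H(gw))}$ immediately after the equivariance step, whereas the paper first splits into the $AN$ integral and only then pulls $d_{\lambda_j}(gaM)=d_{\lambda_j}(g)$ out of the $A$-integral; these are the same calculation in a different order.
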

In view of \eqref{general radon special}, \eqref{special} is the
special case $\lambda_j=\lambda_k$ of the more general formula in
Lemma \ref{general version} and hence we do not give a proof here.
Recall that the $\phi_{\lambda_j}$ are real-valued. Let $a\in
C^{\infty}(\Gamma\backslash G/M)$. Then \eqref{Definition Op(a)}
yields
\begin{eqnarray*}
\langle \Op(a)\phi_{\lambda_j}, \phi_{\lambda_j}\rangle =
\int_{B^{(2)}}\left(\int_X \chi a(z,b)e^{(i\lambda_j+\rho)(\langle
z,b\rangle + \langle z,b'\rangle)} \, dz \right) T_{\lambda_j}(db)
\, T_{\lambda_j}(db').
\end{eqnarray*}

It follows from Lemma \ref{formula} that
$d_{\lambda_j}\mathcal{R}(L_{\lambda_j}\chi a)$ has removable
singularities in each $(b,b)\in B\times B$. Hence by the same lemma
$\langle \Op(a)\phi_{\lambda_j}, \phi_{\lambda_j}\rangle$ equals
\begin{eqnarray*}
\langle d_{\lambda_j}\mathcal{R}(L_{\lambda_j}\chi a), T_{\lambda_j}\otimes T_{\lambda_j}\rangle
= \langle \mathcal{R}(L_{\lambda_j}\chi a), ps_{\lambda_j} \rangle
= \langle L_{\lambda_j}(\chi a), PS_{\lambda_j} \rangle,
\end{eqnarray*}
which proves Theorem \ref{Intertwining Formula diagonal}.

\section{Off-diagonal Patterson--Sullivan Distributions}\label{section off-diag}

In this section we generalize the results of Section \ref{PatSul} to
the off-diagonal case and thus prove Theorem \ref{Intertwining
Formula}.

\subsection*{Off-diagonal Intermediate Values}

The construction of $PS_{\lambda_j,\lambda_k}$ is different from the
construction of $PS_{\lambda_j}$. We will see in this section why it
is impossible to define functionals $ps_{\lambda_j,\lambda_k}$
($\lambda_j\neq\lambda_k$).
\begin{defn}
Given $\lambda,\mu\in\la$, define $d_{\lambda,\mu}: G/M\rightarrow\cc$ by
\begin{eqnarray}\label{dlambda mu}
d_{\lambda,\mu}(g) = e^{(i\lambda+\rho)H(g)}e^{(i\mu+\rho)H(gw)}.
\end{eqnarray}
\end{defn}
This is well-defined, since the Iwasawa projection is $M$-invariant
and $M'$ normalizes $M$. What we really need is a geodesic flow
invariant function on $G/M$, that is $d_{\lambda,\mu}$ \emph{should}
be invariant under the right action of $A$. In other words, we would
wish to have $d_{\lambda,\mu}$ well-defined on $G/MA$. But for $g\in
G$, $m\in M$ and $a\in A$ a direct computation shows
\begin{eqnarray}\label{observation}
d_{\lambda,\mu}(gam) = d_{\lambda,\mu}(g) e^{i(\lambda-\mu)\log(a)}.
\end{eqnarray}
It follows that $d_{\lambda,\mu}$ is \emph{not} a function on
$G/MA$. This implies that for $\lambda\neq\mu$ we \emph{cannot}
define a more general function $d_{\lambda,\mu}(b,b')$ in analogy
with \eqref{intermediate value formula}. We will see in
\eqref{repair} how to circumvent this problem. Exactly as in Lemma
\ref{equivariance property} we have:

\begin{lem}\label{equivariance property off-diag}
Let $\gamma,g\in G$. Then
\begin{eqnarray}\label{equivariance property off-diag2}
d_{\lambda,\mu}(\gamma g) = e^{(i\lambda+\rho)\langle\gamma\cdot o,
\gamma g\cdot M\rangle} e^{(i\mu+\rho)\langle\gamma\cdot o,
\gamma g\cdot wM\rangle} d_{\lambda,\mu}(g).
\end{eqnarray}
\end{lem}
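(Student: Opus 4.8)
The plan is to reduce the statement to the additive transformation rule for the Iwasawa projection $H$ recorded in Lemma \ref{facts}(ii); the computation is then purely formal, just as in the proof of Lemma \ref{equivariance property}.

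First I would expand the left-hand side using the definition \eqref{dlambda mu}, so that
\begin{eqnarray*}
d_{\lambda,\mu}(\gamma g) = e^{(i\lambda+\rho)H(\gamma g)} \, e^{(i\mu+\rho)H(\gamma g w)},
\end{eqnarray*}
observing that $(\gamma g)w = \gamma(gw)$, so the second exponent is again of the form $H(\gamma g')$ with $g' = gw$. Next I would invoke Lemma \ref{facts}(ii): its first identity gives $H(\gamma g) = H(g) + \langle\gamma\cdot o, \gamma g\cdot M\rangle$, and its second identity gives $H(\gamma g w) = H(gw) + \langle\gamma\cdot o, \gamma g\cdot wM\rangle$. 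Substituting these two relations into the exponents and splitting the exponentials via $e^{x+y}=e^x e^y$, one separates the two ``error terms'' $e^{(i\lambda+\rho)\langle\gamma\cdot o, \gamma g\cdot M\rangle}$ and $e^{(i\mu+\rho)\langle\gamma\cdot o, \gamma g\cdot wM\rangle}$ from the remaining factor $e^{(i\lambda+\rho)H(g)}e^{(i\mu+\rho)H(gw)}$, which is precisely $d_{\lambda,\mu}(g)$ by \eqref{dlambda mu}. This yields the asserted formula.

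There is essentially no obstacle here: the only point requiring care is the bookkeeping of which boundary point, $\gamma g\cdot M$ or $\gamma g\cdot wM$, is paired with which spectral parameter, and this is dictated unambiguously by the two clauses of Lemma \ref{facts}(ii). I would note that this is the exact off-diagonal analogue of Lemma \ref{equivariance property}, and that setting $\lambda=\mu$ recovers \eqref{equivariance property2}.
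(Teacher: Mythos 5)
Your proof is correct and follows exactly the route the paper intends: the paper simply remarks ``Exactly as in Lemma \ref{equivariance property} we have'' and, for that diagonal case, ``This follows from Lemma \ref{facts},'' which is precisely the substitution of the two identities of Lemma \ref{facts}(ii) into the definition \eqref{dlambda mu} that you carry out. Nothing to add.
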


\subsection*{Invariance Properties}
Let $f$ be a function on $G/M$ and let $\lambda,\mu\in\la^*$. The
\emph{weighted Radon transform} on $G$ is defined by
\begin{eqnarray}\label{repair}
(\mathcal{R}_{\lambda,\mu}f)(g) := \int_A d_{\lambda,\mu}(ga)f(ga)\,da,
\end{eqnarray}
whenever the integral exists. As in Lemma \ref{Radon compact} we deduce:

\begin{rem}
Let $f\in C_c^{\infty}(G/M)$. Then $\mathcal{R_{\lambda,\mu}}(f)\in
C_c^{\infty}(G/MA)$ is invariant under the geodesic flow of
$G/M=SX$. Hence $\mathcal{R_{\lambda,\mu}}(f)$ is defined on $G/MA$
(see \eqref{observation} and its subsequent remark).
\end{rem}

\begin{defn}
As before, let $g(b,b')\in G$ be a representative for the element in
$G/MA$ that corresponds to $(b,b')\in B^{(2)}$. Given $f\in
C_c^{\infty}(G/M)$, we define
\begin{eqnarray}
\mathcal{R}_{\lambda,\mu}(f)(b,b') := \mathcal{R}_{\lambda,\mu}(f)(g(b,b')).
\end{eqnarray}
Then $\mathcal{R}_{\lambda,\mu}f \in C_c^{\infty}(B^{(2)})$. This
definition is independent of the choice of representative $g(b,b')$,
since $\mathcal{R}_{\lambda,\mu}(f)$ is invariant.
\end{defn}

Let $f\in C_c^{\infty}(G/M)$. The values $|d_{\lambda,\mu}(g)|$ are
independent of $\lambda,\mu$ and all derivatives of
$d_{\lambda,\mu}$ have polynomial growth in $\lambda,\mu$. It
follows that given a continuous seminorm $\|\cdot\|_1$ on
$C^{\infty}(B\times B)$ there exist $K_1>0$ and a continuous
seminorm $\|\cdot\|_2$ on $C_c^{\infty}(G/M)$ such that
\begin{eqnarray}\label{Radon estimate}
\|\mathcal{R}_{\lambda,\mu}(f)\|_1 \leq (1+|\lambda|)^{K_1}(1+|\mu|)^{K_1}\|f\|_2.
\end{eqnarray}

\begin{defn}
The \emph{off-diagonal Patterson--Sullivan distribution associated
to $\phi_{\lambda_j}$ and $\phi_{\lambda_k}$} is the distribution on
$SX=G/M$ defined by
\begin{eqnarray}\label{Off-Diagonal Patterson-Sullivan}
PS_{\lambda_j,\lambda_k}(f):=\langle
f,PS_{\lambda_j,\lambda_k}\rangle := \int_{B^{(2)}} \,
(\mathcal{R}_{\lambda_j,\lambda_k}f)(b,b') \, T_{\lambda_j}(db) \,
T_{\lambda_k}(db').
\end{eqnarray}
\end{defn}

Assume $\mathcal{R}_{\lambda_j,\lambda_k}(f) \in C^{\infty}(B\times
B)$. Then $PS_{\lambda_j,\lambda_k}(f)$ is well-defined. A simple
example is when $f\in C_c^{\infty}(SX)=C_c^{\infty}(G/M)$. In this
case, it follows from \eqref{BtimesB}, \eqref{Radon estimate} and
\eqref{Off-Diagonal Patterson-Sullivan} that there exist $K>0$ and
a continuous seminorm $\|\cdot\|_2$ on $C_c^{\infty}(G/M)$ such that
\begin{eqnarray}\label{define K}
|PS_{\lambda_j,\lambda_k}(f)| \leq (1+|\lambda_j|)^K(1+|\lambda_k|)^K \|f\|_2.
\end{eqnarray}

\begin{rem}\label{general radon special}
Let $(b,b')\in B^{(2)}$ and $g=g(b,b')$. Then
\begin{eqnarray}
\mathcal{R}_{\lambda_j,\lambda_j}(f)(g) = \int_A d_{\lambda_j,\lambda_j}(ga)f(ga)\,da
= d_{\lambda_j}(g(b,b'))(\mathcal{R}f)(b,b'),
\end{eqnarray}
which implies $PS_{\lambda_j,\lambda_j}=PS_{\lambda_j}$.
\end{rem}

\begin{prop}\label{off diagonal invariant}
Suppose that $\phi_{\lambda_j}$ and $\phi_{\lambda_k}$ are
$\Gamma$-invariant eigenfunctions. Then the distribution
$PS_{\lambda_j,\lambda_k}$ on $SX=G/M$ is $\Gamma$-invariant.
\end{prop}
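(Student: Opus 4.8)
The plan is to mimic the proof of Proposition \ref{ps invariant}, with the weighted Radon transform $\mathcal{R}_{\lambda_j,\lambda_k}$ taking over the role played there by the multiplier $d_{\lambda_j}$; this substitution is legitimate precisely because $\mathcal{R}_{\lambda_j,\lambda_k}$, unlike $d_{\lambda_j,\lambda_k}$ itself, descends to $G/MA$. Since $\Gamma$ is a group, it suffices to show $PS_{\lambda_j,\lambda_k}(f\circ\gamma)=PS_{\lambda_j,\lambda_k}(f)$ for all $\gamma\in\Gamma$ and all $f\in C_c^{\infty}(G/M)$, where $(f\circ\gamma)(xM)=f(\gamma xM)$.

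The first step is to compute the transformation law of the weighted Radon transform under $\Gamma$. Fix $(b,b')\in B^{(2)}$, put $g=g(b,b')$ so that $(b,b')=(g\cdot M,g\cdot wM)$, and note that $\gamma g$ represents $(\gamma\cdot b,\gamma\cdot b')$. Then
\begin{eqnarray*}
\mathcal{R}_{\lambda_j,\lambda_k}(f)(\gamma\cdot b,\gamma\cdot b') = \int_A d_{\lambda_j,\lambda_k}(\gamma ga)\, f(\gamma ga)\,da,
\end{eqnarray*}
and Lemma \ref{equivariance property off-diag} applied to the element $ga$ rewrites $d_{\lambda_j,\lambda_k}(\gamma(ga))$ as $e^{(i\lambda_j+\rho)\langle\gamma\cdot o,\gamma ga\cdot M\rangle}\,e^{(i\lambda_k+\rho)\langle\gamma\cdot o,\gamma ga\cdot wM\rangle}\,d_{\lambda_j,\lambda_k}(ga)$. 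The decisive observation --- and the only place where the geometry really enters --- is that this prefactor is \emph{independent of} $a\in A$: by Lemma \ref{space of geodesics} the subgroup $A$ stabilizes both $b_{\infty}=M$ and $b_{-\infty}=wM$, hence $ga\cdot M=g\cdot M=b$ and $ga\cdot wM=g\cdot wM=b'$ for every $a$, so the prefactor equals $e^{(i\lambda_j+\rho)\langle\gamma\cdot o,\gamma\cdot b\rangle}\,e^{(i\lambda_k+\rho)\langle\gamma\cdot o,\gamma\cdot b'\rangle}$. Pulling it out of the $A$-integral and recalling $\mathcal{R}_{\lambda_j,\lambda_k}(f)(\gamma g)=\mathcal{R}_{\lambda_j,\lambda_k}(f)(\gamma\cdot b,\gamma\cdot b')$ yields
\begin{eqnarray*}
\mathcal{R}_{\lambda_j,\lambda_k}(f\circ\gamma)(b,b') = e^{-(i\lambda_j+\rho)\langle\gamma\cdot o,\gamma\cdot b\rangle}\,e^{-(i\lambda_k+\rho)\langle\gamma\cdot o,\gamma\cdot b'\rangle}\,\mathcal{R}_{\lambda_j,\lambda_k}(f)(\gamma\cdot b,\gamma\cdot b').
\end{eqnarray*}

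Substituting this into the definition \eqref{Off-Diagonal Patterson-Sullivan} and invoking the equivariance \eqref{boundary values equivariance 1} of the boundary values in the form $e^{-(i\lambda_j+\rho)\langle\gamma\cdot o,\gamma\cdot b\rangle}T_{\lambda_j}(db)=T_{\lambda_j}(d\gamma b)$ (and likewise for $\lambda_k$) gives
\begin{eqnarray*}
PS_{\lambda_j,\lambda_k}(f\circ\gamma) = \int_{B^{(2)}} \mathcal{R}_{\lambda_j,\lambda_k}(f)(\gamma\cdot b,\gamma\cdot b')\, T_{\lambda_j}(d\gamma b)\, T_{\lambda_k}(d\gamma b'),
\end{eqnarray*}
and a change of variables in the pairing --- i.e. $(\gamma T_{\lambda_j}\otimes\gamma T_{\lambda_k})\big(H\circ(\gamma\times\gamma)\big)=(T_{\lambda_j}\otimes T_{\lambda_k})(H)$ with $H=\mathcal{R}_{\lambda_j,\lambda_k}(f)$ --- collapses the right-hand side to $PS_{\lambda_j,\lambda_k}(f)$. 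The only routine point to verify is that for $f\in C_c^{\infty}(G/M)$ both $\mathcal{R}_{\lambda_j,\lambda_k}(f)$ and $\mathcal{R}_{\lambda_j,\lambda_k}(f\circ\gamma)$ lie in $C_c^{\infty}(B^{(2)})$, so that all the pairings against $T_{\lambda_j}\otimes T_{\lambda_k}$ and the change of variables are legitimate; this is exactly the content of the remark following \eqref{repair}, so no further estimate is needed. The main obstacle is therefore conceptual rather than computational: recognizing that the $\Gamma$-cocycle produced by Lemma \ref{equivariance property off-diag} is constant along each $A$-orbit and hence survives the $a$-integration intact --- which is precisely what rescues the argument in the absence of a well-defined functional $ps_{\lambda_j,\lambda_k}$.
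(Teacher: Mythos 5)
Your proof is correct and follows essentially the same route as the paper, using the cocycle relation for $d_{\lambda_j,\lambda_k}$ from Lemma \ref{equivariance property off-diag}, the boundary-value equivariance \eqref{boundary values equivariance 1}, and the $G$-equivariance of $(b,b')\mapsto g(b,b')MA$. Your presentation is slightly more modular --- you first isolate the transformation law for $\mathcal{R}_{\lambda_j,\lambda_k}$ under $\gamma$, explicitly pointing out that the $\Gamma$-cocycle is constant along $A$-orbits, before pairing against $T_{\lambda_j}\otimes T_{\lambda_k}$ --- whereas the paper performs the cancellation inline inside the $PS_{\lambda_j,\lambda_k}$ integral, but the substance is identical.
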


\begin{proof}
Let $f\in C_c^{\infty}(G/M)$ and let $f_{\gamma}$ denote the
translation $f\circ\gamma^{-1}$. Then
\begin{eqnarray*}
\langle f_{\gamma}, PS_{\lambda_j,\lambda_k}\rangle = \int_{B^{(2)}}
\int_A \, d_{\lambda_j,\lambda_k}(g(b,b')a) \, f(\gamma^{-1}g(b,b')a) \, da \,
T_{\lambda_j}(db) \, T_{\lambda_k}(db'),
\end{eqnarray*}
where $(b,b') = (g\cdot M,g\cdot wM)$ for $g=g(b,b')$. By
\eqref{boundary values equivariance 1} this equals
\begin{eqnarray}\label{cancel}
&& \int_{B^{(2)}}\int_A d_{\lambda_j,\lambda_k}(g(\gamma\cdot(b,b'))a) \, f(\gamma^{-1}g(\gamma(b,b'))a)
e^{-(i\lambda_j+\rho)\langle\gamma\cdot o,\gamma\cdot b\rangle} \nonumber\\
&& \hspace{11mm} \times \hspace{2mm} e^{-(i\lambda_k+\rho)\langle\gamma\cdot o,\gamma\cdot b'\rangle} da \,
T_{\lambda_j}(db) \, T_{\lambda_k}(db').
\end{eqnarray}
Recall that $a\in A$ acts trivially on $(M,wM)$. Using this and
\eqref{equivariance property off-diag2} we observe
\begin{eqnarray*}
d_{\lambda_j,\lambda_k}(\gamma ga) = e^{(i\lambda_j+\rho)\langle\gamma\cdot o,\gamma\cdot  b\rangle}
e^{(i\lambda_k+\rho)\langle\gamma\cdot o,\gamma\cdot  b'\rangle} d_{\lambda_j,\lambda_k}(ga).
\end{eqnarray*}
We also have $g(\gamma\cdot(b,b')) = \gamma g(b,b'))$, since
$(b,b')\mapsto g(b,b')\in G/MA$ is $G$-equivariant. Hence
$\gamma^{-1}g(\gamma\cdot(b,b'))=g(b,b')$. Thus we have
\begin{eqnarray*}
\langle f_{\gamma}, PS_{\lambda_j,\lambda_k}\rangle
&=& \int_{B^{(2)}} \int_A d_{\lambda_j,\lambda_k}(g(b,b')a) f(g(b,b')a) \, da \,
    T_{\lambda_j}(db) T_{\lambda_k}(db') \\
&=& \int_{B^{(2)}} \mathcal{R}_{\lambda_j,\lambda_k}f(b,b') \,
    T_{\lambda_j}(db) T_{\lambda_k}(db') = \langle f,
    PS_{\lambda_j,\lambda_k}\rangle,
\end{eqnarray*}
and the proposition follows.
\end{proof}

In view of Proposition \ref{off diagonal invariant}, the definition
of $PS_{\lambda_j,\lambda_k}$ descends to
$SX_{\Gamma}=\Gamma\backslash SX$:
\begin{defn}\label{normalize off-diag}
\begin{itemize}
\item[(1)] The off-diagonal Patterson--Sullivan distributions on
$SX_{\Gamma}$ are defined by ($\chi$ is a smooth fundamental domain cutoff)
\begin{eqnarray}
\langle a,PS_{\lambda_j,\lambda_k}\rangle_{SX_{\Gamma}}
:= \langle\chi a,PS_{\lambda_j,\lambda_k}\rangle_{SX}.
\end{eqnarray}
\item[(2)] We normalize these distributions by
\begin{eqnarray}
\widehat{PS}_{\lambda_j,\lambda_k}
= \frac{1}{\langle 1,PS_{\lambda_k,\lambda_k}\rangle_{SX_{\Gamma}}}PS_{\lambda_j,\lambda_k}.
\end{eqnarray}
\end{itemize}
\end{defn}

\subsection*{Proof of Theorem \ref{Intertwining Formula}}

The following lemma is the off-diagonal analog of Lemma
\ref{formula}.
\begin{lem}\label{general version}
Let $a\in C^{\infty}(SX_{\Gamma})$, $(b,b')\in B^{(2)}$. Then
\begin{eqnarray}\label{intertwining}
\int_X \chi a(z,b)e^{(i\lambda_j+\rho)\langle z,b\rangle}e^{(i\lambda_k+\rho)\langle z,b'\rangle}dz
= \mathcal{R}_{\lambda_j,\lambda_k}(L_{\lambda_k}(\chi a))(b,b').
\end{eqnarray}
\end{lem}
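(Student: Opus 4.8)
The plan is to follow the same computation as in the (omitted) proof of Lemma \ref{formula}, but now carrying the two different parameters $\lambda_j,\lambda_k$ through. Fix $(b,b')\in B^{(2)}$ and choose a representative $g=g(b,b')\in G$ with $(b,b')=(g\cdot M,g\cdot wM)$; as in the diagonal case one checks at the end that the result is independent of this choice. First I would use the $G$-invariance of $dz$ to rewrite the left-hand side as $\int_X \chi a(g\cdot z,b)\,e^{(i\lambda_j+\rho)\langle g\cdot z,b\rangle}e^{(i\lambda_k+\rho)\langle g\cdot z,b'\rangle}\,dz$, and then apply the integral formula \eqref{integral formula AN and G/K} to pass to an integral over $AN$. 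Since $b=g\cdot M$ and $AN$ fixes $M=b_\infty\in K/M$, we have $\chi a(gan\cdot o,g\cdot M)=\chi a(ganM)$, so the integrand becomes a function on $G/M$ tested against $e^{(i\lambda_j+\rho)\langle gan\cdot o,g\cdot M\rangle}e^{(i\lambda_k+\rho)\langle gan\cdot o,g\cdot wM\rangle}$.

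Next I would compute the two horocycle brackets. By Lemma \ref{facts}(i) and \eqref{horocycle bracket}, $\langle gan\cdot o,g\cdot M\rangle=\langle gan\cdot o,gan\cdot M\rangle=H(ga)$; and using \eqref{equivariance} together with \eqref{horocycle bracket},
\begin{eqnarray*}
\langle gan\cdot o,g\cdot wM\rangle = \langle an\cdot o,wM\rangle + \langle g\cdot o,g\cdot wM\rangle
= -H(n^{-1}a^{-1}w) + H(gw).
\end{eqnarray*}
Since $A$ normalizes $N$ this is $-H(n^{-1}w)-\log(a)+H(gw)$, and using $H(n^{-1}w)=H(nw)$ (equivalently $H(\overline{n})=H(\overline{n}^{-1})$, from \cite{He00}) together with \eqref{w and a} in the form $H(gwa^{-1})=H(gaw)$, this equals $-H(nw)+H(gaw)$. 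Substituting back, the left-hand side of \eqref{intertwining} becomes
\begin{eqnarray*}
\int_A \int_N \chi a(ganM)\, e^{(i\lambda_j+\rho)H(ga)}\, e^{(i\lambda_k+\rho)H(gaw)}\, e^{-(i\lambda_k+\rho)H(nw)}\, dn\, da.
\end{eqnarray*}

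Finally, pulling the $N$-integral inside and recognizing $L_{\lambda_k}(\chi a)(ganM)=\int_N \chi a(ganM)e^{-(i\lambda_k+\rho)H(nw)}dn$ (after noting that $n\mapsto \tilde m^{-1}n\tilde m$ preserves $dn$, which handles the $M$-part), the $da$-integral is exactly $\int_A d_{\lambda_j,\lambda_k}(ga)\,L_{\lambda_k}(\chi a)(gaM)\,da = \mathcal{R}_{\lambda_j,\lambda_k}(L_{\lambda_k}(\chi a))(g)$ by the definitions \eqref{dlambda mu} and \eqref{repair}. Independence of the representative $g(b,b')$ follows from the unimodularity of $A$ and the $M$-invariance just noted, and existence of the iterated integral follows from Fubini plus the compact support of $\chi a$ and the boundedness of $e^{-(i\lambda_k+\rho)H(nw)}$ established in the lemma preceding \ref{formula}. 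The main subtlety — mild, but worth stating carefully — is bookkeeping the asymmetry: the weight $e^{-(i\lambda_k+\rho)H(nw)}$ carries $\lambda_k$ (forcing the $L_{\lambda_k}$ on the right), while the two exponential factors $e^{(i\lambda_j+\rho)H(ga)}$ and $e^{(i\lambda_k+\rho)H(gaw)}$ combine into $d_{\lambda_j,\lambda_k}(ga)$ rather than a clean function on $G/MA$, which is exactly why the weighted Radon transform $\mathcal{R}_{\lambda_j,\lambda_k}$ (and not $d_{\lambda_j,\lambda_k}\cdot\mathcal{R}$) appears on the right-hand side. Specializing $\lambda_j=\lambda_k$ recovers Lemma \ref{formula} via Remark \ref{general radon special}.
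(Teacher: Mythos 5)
Your proof is correct and follows essentially the same route as the paper's own proof of Lemma \ref{general version}: choose $g=g(b,b')$, use $G$-invariance of $dz$ and the integral formula \eqref{integral formula AN and G/K} to pass to $\int_{AN}$, evaluate the two horocycle brackets via Lemma \ref{facts}, \eqref{equivariance}, $H(n^{-1}w)=H(nw)$ and \eqref{w and a}, and then peel off $d_{\lambda_j,\lambda_k}(ga)$ and $L_{\lambda_k}(\chi a)(gaM)$ to recognize $\mathcal{R}_{\lambda_j,\lambda_k}$. The only cosmetic difference is that you spell out the intermediate step $-H(n^{-1}w)-\log(a)+H(gw)$ using that $A$ normalizes $N$, where the paper compresses this via \eqref{w and a}.
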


\begin{proof}
Select $g\in G$ such that $(b,b')=(g\cdot M,g\cdot wM)$. The
following manipulations do not depend on the choice of $g$. By
$G$-invariance of $dz$, the left hand side of \eqref{intertwining}
equals
\begin{eqnarray}\label{obtain off}
\int_X \chi a(g\cdot z,b)e^{(i\lambda_j+\rho)\langle g\cdot z,b\rangle}
e^{(i\lambda_k+\rho)\langle g\cdot z,b'\rangle}dz.
\end{eqnarray}
Identify $\chi a$ with a function on $G/M$: Then since $b=g\cdot o$ we have
$$\chi a(gan\cdot o,b)=\chi a(gan\cdot o,g\cdot M)=\chi a(gan\cdot o,gan\cdot M)=\chi a(ganM)$$
(recall that $P=MAN$ fixes $M\in K/M$, in particular $an\in AN$
fixes $M=b_{\infty}$). From the integral formula \eqref{integral
formula AN and G/K} we obtain that \eqref{obtain off} equals
\begin{eqnarray}\label{the integral}
\int_{AN} \chi a(ganM) e^{(i\lambda_j+\rho)\langle gan\cdot o,g\cdot M\rangle}
e^{(i\lambda_k+\rho)\langle gan\cdot o,g\cdot wM\rangle} \, dn \, da.
\end{eqnarray}
But $\langle gan\cdot o,g\cdot M\rangle = \langle gan\cdot
o,gan\cdot M\rangle = H(gan) = H(ga)$ and $H(n^{-1}w)=H(nw)$ (which
is equivalent to $H(\overline{n})=H(\overline{n}^{-1})$ and thus
follows from \cite{He00}, p. 436 (8)). Then \eqref{horocycle
bracket} and \eqref{equivariance} yield
\begin{eqnarray*}
\langle gan\cdot o,g\cdot wM\rangle = -H(n^{-1}a^{-1}w) + H(gw),
\end{eqnarray*}
which by \eqref{w and a} equals $-H(nw) + H(gaw)$. Hence \eqref{the integral} becomes
\begin{eqnarray*}
&& \int_{AN} \chi a(ganM) e^{(i\lambda_j+\rho)H(ga)} e^{(i\lambda_k+\rho)H(gaw)}
   e^{-(i\lambda_k+\rho)H(nw)}dn\, da \\
&=& \int_A d_{\lambda_j,\lambda_k}(gaM) \int_N \chi a(ganM) e^{-(i\lambda_k+\rho)H(nw)} dn\, da \\
&=& \int_A d_{\lambda_j,\lambda_k}(gaM) (L_{\lambda_k}(\chi a))(gaM) da
 = \mathcal{R}_{\lambda_j,\lambda_k}(L_{\lambda_k}(\chi a))(b,b').
\end{eqnarray*}
The independence of the representative $g(b,b')$ follows from the
unimodularity of $A$ and because the mapping $N\ni n\mapsto
\tilde{m}^{-1}n\tilde{m}\in N$ ($\tilde{m}\in M$) preserves the
measure $dn$ (since $M$ is compact).
\end{proof}

As in Section \ref{Proof subsection} we may now integrate
\eqref{intertwining} against $T_{\lambda_j}(db)T_{\lambda_k}(db')$,
which completes the proof of Theorem \ref{Intertwining Formula}.

\section{Proof of Theorem \ref{Asymptotic}}
Given a \emph{phase function} $\psi:\rr^n\rightarrow\cc$ such that
$\mathrm{Im}(\psi)\geq 0$ and an \emph{amplitude} $\alpha\in
C_c^{\infty}(\rr^n)$ and $\tau>0$, consider the integral
\begin{eqnarray*}
I(\tau):= \int e^{i\tau\psi(x)}\alpha(x)dx.
\end{eqnarray*}
It is well known (\cite{OSH}, p. 195) that
if $\psi'\neq0$ on the support of $a$, then
$I(\tau)=O(\tau^{-\infty})$ as $\tau\rightarrow\infty$. Assume that
$0\in\rr^n$ is the only critical point of $\psi$ and let
$H=\psi''(0)$ be nonsingular at $0$. Also assume $\psi(0)=0$. Then
\begin{eqnarray*}
\psi(x) = \langle Hx,x\rangle/2 + O(|x|^3) \hspace{2mm}\textnormal{as } x\rightarrow0
\end{eqnarray*}
and one proves (loc. cit., p. 171) the
asymptotic expansion
\begin{eqnarray}\label{asymptotics}
\int e^{i\tau\psi(x)}\alpha(x)dx \sim C (2\pi/\tau)^{n/2}
\sum_{k=0}^{\infty}\tau^{-k} R_k a(0) \,\,\,\,\,\,\, (\tau\rightarrow\infty),
\end{eqnarray}
where $R_k = (\langle H^{-1}D,D \rangle/{2i})^k$ is a differential
operator on $\rr^n$ of order $2k$ with $D=(D_1,\ldots,D_n)$, where
$D_j=-i\partial_j$ and $C=|\det H|^{-1/2} e^{\pi i\sign(H)/4}$ is a
constant depending on $\psi$. We refer to \eqref{asymptotics} as the
\emph{MSP-formula} (method of stationary phase).

We now assume $\lambda_j\in\la^*_+$ for all $j\in\nn_0$, identify $\la^*_+=\rr^+$,
and write $\la=\rr\cdot H_0$. If $\overline{n} :=
wnw^{-1}\in\overline{N}$ for $n\in N$, then
\begin{eqnarray*}
L_{\lambda}(\chi a)(g)
= \int_{\overline{N}} e^{-(i\lambda)\langle H(\overline{n}), H_0\rangle}
e^{-\rho\langle H(\overline{n}),H_0\rangle}\chi a(gw\overline{n}w^{-1})d\overline{n},
\hspace{4mm} \lambda,\rho\in\rr.
\end{eqnarray*}

\begin{prop}
The phase function $\psi(\overline{n})=\langle H(\overline{n}),
H_0\rangle$ has exactly one critical point, namely $\overline{n}=e$.
The Hessian form at $\overline{n}=e$ is non-degenerate.
\end{prop}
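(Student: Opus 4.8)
The plan is to deduce the proposition from the explicit rank-one Iwasawa decomposition of $\overline{N}$. Since $\dim\la=1$ and $H_0$ is the unit vector of $\la^+$, we may write $H(\overline{n})=\psi(\overline{n})\,H_0$; moreover $\kappa:=\alpha(H_0)>0$ because $H_0\in\la^+$ and $\alpha$ is a positive root, so that $\psi=\kappa^{-1}\,(\alpha\circ H)$ on $\overline{N}$. We identify $\overline{N}$ with its Lie algebra $\overline{\lnn}=\g_{-\alpha}\oplus\g_{-2\alpha}$ by $\exp$ and write a point as $\overline{n}=\overline{n}_{X,Y}:=\exp(X+Y)$ with $X\in\g_{-\alpha}$ and $Y\in\g_{-2\alpha}$ (this is the chart in which the stationary phase integral above lives; by the convention $m_{2\alpha}=0$ the variable $Y$ is absent when $2\alpha\notin\Sigma$). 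The classical rank-one computation of the Iwasawa projection (see \cite{S}; cf.\ \cite{He00}) then produces an identity of the form
\begin{eqnarray*}
e^{2\alpha(H(\overline{n}_{X,Y}))} \;=\; \bigl(1+c_1|X|^2\bigr)^2 + c_2|Y|^2 \;=:\; P(X,Y),
\end{eqnarray*}
with constants $c_1,c_2>0$ depending only on $m_\alpha$ and $m_{2\alpha}$ (the term $c_2|Y|^2$ being absent when $2\alpha\notin\Sigma$); consequently $P\ge 1$ and
\begin{eqnarray*}
\psi(\overline{n}_{X,Y}) \;=\; \frac{1}{2\kappa}\,\log P(X,Y).
\end{eqnarray*}

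All three assertions then follow by elementary computation with $P$. First, $P(X,Y)\ge 1$ with equality exactly when $X=0$ and $Y=0$; thus $\psi\ge 0=\psi(e)$, so $\overline{n}=e$ is a critical point (a global minimum). Second, $P>0$ everywhere, so $\nabla\psi=(2\kappa P)^{-1}\nabla P$; since $\nabla_X P=4c_1(1+c_1|X|^2)X$ vanishes only at $X=0$ (the scalar factor being $\ge 1$) and $\nabla_Y P=2c_2Y$ vanishes only at $Y=0$, the unique critical point of $\psi$ is $\overline{n}_{0,0}=e$. Third, Taylor expansion at the origin gives $P(X,Y)=1+2c_1|X|^2+c_2|Y|^2+O(|(X,Y)|^4)$, hence $\log P(X,Y)=2c_1|X|^2+c_2|Y|^2+O(|(X,Y)|^4)$, so the degree-two part of $\psi$ at $\overline{n}=e$ in the coordinates $(X,Y)$ is the quadratic form $\frac{c_1}{\kappa}|X|^2+\frac{c_2}{2\kappa}|Y|^2$, which is positive definite; therefore the Hessian form of $\psi$ at $\overline{n}=e$ is positive definite, in particular non-degenerate. (When $2\alpha\notin\Sigma$ only the $|X|^2$-term is present and the same conclusion holds.)

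The only genuine work is establishing the displayed formula for $e^{2\alpha(H(\overline{n}))}$ together with the positivity of the constants $c_1,c_2$ in the chosen normalisation of the Killing form; this is the standard rank-one Iwasawa computation, which will be recorded in \cite{S} and can otherwise be checked case by case over the four families of rank-one symmetric spaces. I expect this bookkeeping of constants to be the main — indeed essentially the only — obstacle; everything after it is a short Taylor expansion. Conceptually the statement is transparent: unwinding the definitions (using $H(\overline{n})=H(\overline{n}^{-1})$ and the horocycle bracket \eqref{horocycle bracket}), $\psi$ is, up to the positive factor $\kappa^{-1}$, the Busemann function at the boundary point $b_\infty=M$ restricted to the horosphere $\overline{N}\cdot o$ based at the antipodal point $b_{-\infty}=wM$; since $X$ has rank one there is a unique geodesic joining $b_\infty$ and $b_{-\infty}$, namely the standard geodesic, it meets the horosphere $\overline{N}\cdot o$ orthogonally at the single point $o$, and that transversality is exactly what forces the Hessian there to be non-degenerate.
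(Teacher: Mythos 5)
Your proof is correct, but it takes a genuinely different route from the paper. The paper's proof of this proposition is a one-line citation to Duistermaat--Kolk--Varadarajan \cite{DKV}, pp.~343, where the non-degeneracy of the Hessian of $\overline{n}\mapsto\lambda(H(\overline{n}))$ is established in arbitrary rank as part of their general analysis of oscillatory integrals on flag manifolds (using the polar decomposition of $H$ on $\overline{N}$ and the structure of the root system). You instead exploit the rank-one hypothesis directly: you reduce $\psi$ to the scalar $\alpha(H(\overline{n}))$, invoke the classical explicit rank-one formula $e^{2\alpha(H(\exp(X+Y)))}=\bigl(1+c_1|X|^2\bigr)^2+c_2|Y|^2$ with $c_1,c_2>0$ (Helgason's Iwasawa computation on $\overline{N}$), and then everything follows from a two-line gradient and Taylor computation. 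What your approach buys is (i) self-containedness in the rank-one setting, with no appeal to DKV's general machinery, and (ii) a sharper conclusion: you show the Hessian is positive definite, not merely non-degenerate, and you exhibit it explicitly as $\tfrac{c_1}{\kappa}|X|^2+\tfrac{c_2}{2\kappa}|Y|^2$. What the DKV citation buys is brevity and generality: the same statement holds in all ranks and for all Weyl group elements, which would matter if one wanted to drop the rank-one hypothesis. Your proof does hinge on correctly pinning down the constants $c_1,c_2$ and their positivity in the normalization fixed in Section~2, which you candidly flag as the only real work; that is indeed the standard rank-one Iwasawa computation and poses no obstruction. The closing remark identifying $\kappa\psi$ with the Busemann function at $b_\infty$ restricted to the horosphere $\overline{N}\cdot o$ is a nice geometric gloss, though it is not needed for the argument.
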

\begin{proof}
\cite{DKV}, pp. 343.
\end{proof}

Clearly $\psi(e)=0$ and for the amplitude $\alpha(\overline{n}) =
e^{-\rho\langle H(\overline{n}),H_0\rangle}\chi
a(gw\overline{n}w^{-1})$ we have $\alpha(e)=\chi a(g)$. Let
$s=\dim(N)=\dim(\overline{N})$. The MSP-formula yields
\begin{eqnarray}\label{integrate this}
L_{\lambda}(\chi a)(g) = C \cdot (2\pi/\lambda)^{s/2} \sum_{n}\lambda^{-n} R_{2n}(\chi a)(g),
\end{eqnarray}
where $R_{2n}$ is a differential operator on $SX$ of order $2n$ and
$R_0$ is the identity. Although we consider off-diagonal elements,
the proof in \cite{AZ} applies with almost no change: Let $K$ be
defined as in \eqref{define K}. Theorem \ref{Intertwining Formula}
implies
\begin{eqnarray*}
&& \langle \Op(a)\phi_{\lambda_j},\phi_{\lambda_k}\rangle_{SX_{\Gamma}}
= \langle L_{\lambda_k}(\chi a),PS_{\lambda_j,\lambda_k}\rangle_{SX} \\
&& \,\,\,\,\, = C \left(2\pi/\lambda_k\right)^{s/2}
\sum_{n=0}^{N} \lambda_k^{-n} \langle R_{2n}(\chi a),PS_{\lambda_j,\lambda_k} \rangle
+ O(\lambda_k^{-N-1+2K}).
\end{eqnarray*}
We choose $N>2K$. Since $R_0$ is the identity, the operator
$L_{\lambda}^{(N)}=\sum_n^{N}\lambda^{-n}R_{2n}$ can be inverted up
to $O(\lambda^{-N-1})$, i.e. one finds differential operators
$M_{\lambda}^{(N)}=\sum_{n=0}^N \lambda^{-n}M_{2n}$, where $M_0=\id$, and $R_{\lambda}^{(N)}$
such that
\begin{eqnarray*}
L_{\lambda}^{(N)} M_{\lambda}^{(N)} = \id + \lambda^{-N-1}R_{\lambda}^{(N)}.
\end{eqnarray*}
An application of Theorem \ref{Intertwining Formula} to $M_{\lambda_k}^{(N)}a$ yields

\begin{eqnarray*}
\langle \Op(M_{\lambda_k}^{(N)}a)\phi_{\lambda_j},\phi_{\lambda_k}\rangle_{SX_{\Gamma}}
&=& \langle L_{\lambda_k}^{(N)}\chi M_{\lambda_k}^{(N)}a,PS_{\lambda_j,\lambda_k}\rangle_{SX}
    + O(\lambda_k^{-N-1+2K}) \\
&=& \langle L_{\lambda_k}^{(N)}M_{\lambda_k}^{(N)}\chi a,PS_{\lambda_j,\lambda_k}\rangle_{SX}
    + O(\lambda_k^{-N-1+2K}) \\
&=& \langle a,PS_{\lambda_j,\lambda_k}\rangle_{SX_{\Gamma}} + O(\lambda_k^{-N-1+2K}).
\end{eqnarray*}

The second line is a consequence of Proposition \ref{independent}. But
\begin{eqnarray}
M_{\lambda}^{(N)}(a) = a + \lambda^{-1}\left( M_2 + \ldots + \lambda^{-N+1}M_{2N}\right)(a),
\end{eqnarray}
so the $L^2$-continuity of zero order pseudodifferential operators implies
\begin{eqnarray}
\langle
\Op(M_{\lambda_k}^{(N)}(a))\phi_{\lambda_j},\phi_{\lambda_k}\rangle_{L^2(X_{\Gamma})}
= \langle
\Op(a)\phi_{\lambda_j},\phi_{\lambda_k}\rangle_{L^2(X_{\Gamma})} +
O(1/\lambda_k),
\end{eqnarray}
which proves
\begin{eqnarray}\label{left side 1}
C \cdot (2\pi/{\lambda_k})^{s/2} \, \langle
a,PS_{\lambda_j,\lambda_k}\rangle_{SX_{\Gamma}} = \langle
\Op(a)\phi_{\lambda_j},\phi_{\lambda_k}\rangle_{SX_{\Gamma}} +
O(1/{\lambda_k}).
\end{eqnarray}
We put $\langle a,PS_{\lambda_j,\lambda_k}\rangle = \langle 1,PS_{\lambda_k,\lambda_k}\rangle \langle a,\widehat{PS}_{\lambda_j,\lambda_k}\rangle$ into \eqref{left side 1} and obtain
\begin{eqnarray}\label{left side 2}
C \cdot (2\pi/\lambda_k)^{s/2} \cdot \langle 1,PS_{\lambda_k,\lambda_k}\rangle \cdot \langle a,\widehat{PS}_{\lambda_j,\lambda_k}\rangle = \langle a,W_{\lambda_j,\lambda_k}\rangle + O(1/{\lambda_k}).
\end{eqnarray}
In particular, for $a=1$, we get
\begin{eqnarray}
C\cdot(2\pi/\lambda_k)^{s/2} \cdot \langle 1,PS_{\lambda_k,\lambda_k}\rangle_{SX_{\Gamma}} = 1 + O(1/{\lambda_k}).
\end{eqnarray}
Together with \eqref{left side 2} this yields
\begin{eqnarray}\label{left side}
\left( 1 + O(1/{\lambda_k}) \right) \cdot \langle a,\widehat{PS}_{\lambda_j,\lambda_k}\rangle = \langle a,W_{\lambda_j,\lambda_k}\rangle + O(1/{\lambda_k}).
\end{eqnarray}
The Wigner distributions and hence by \eqref{left side} the $\langle a,\widehat{PS}_{\lambda_j,\lambda_k}\rangle$ are uniformly bounded. It follows that the left side of \eqref{left side} is asymptotically the same as $\langle a,\widehat{PS}_{\lambda_j,\lambda_k}\rangle$. This completes the proof of Theorem \ref{Asymptotic}.



\addcontentsline{toc}{section}{References}
\providecommand{\bysame}{\leavevmode\hbox to3em{\hrulefill}\thinspace}
\providecommand{\MR}{\relax\ifhmode\unskip\space\fi MR }
\providecommand{\MRhref}[2]{\href{http://www.ams.org/mathscinet-getitem?mr=#1}{#2}}
\providecommand{\href}[2]{#2}

\vspace{30pt}

\textsc{Joachim Hilgert\\Institut f\"ur Mathematik, Universit\"at Paderborn, Warburger Str. 100, 33098 Paderborn, Germany.}\\
\textit{E-mail address:} \texttt{hilgert@math.uni-paderborn.de}\\

\textsc{Michael Schr\"oder\\Institut f\"ur Mathematik, Universit\"at Paderborn, Warburger Str. 100, 33098 Paderborn, Germany.}\\
\textit{E-mail address:} \texttt{michaoe@math.uni-paderborn.de}


\end{document}